\documentclass[a4paper, twoside]{scrartcl}
\usepackage{amsfonts,amssymb,amsmath,amsthm,amstext,amssymb,mathtools}
\usepackage{subcaption,float,color,xcolor,graphicx,enumitem}
\usepackage{dsfont}  
\usepackage[normalem]{ulem} 
\usepackage[authoryear,sort,round]{natbib}  
\usepackage{fullpage} 
\usepackage{hyperref,nicefrac}
\usepackage[noabbrev,capitalise,nosort,nameinlink]{cleveref}

\crefname{assumption}{Assumption}{Assumptions}
\crefname{openproblem}{Open Problem}{Open Problem}
\crefname{notation}{Notation}{Notation}

%

\usepackage{adjustbox}
\usepackage{tikz}
\usetikzlibrary{bayesnet,er,trees,shapes.symbols,mindmap,arrows,decorations,shapes.misc,shapes.arrows,chains,matrix,positioning,scopes,decorations.pathmorphing,patterns}
\usepackage{tikz-cd} 

\usetikzlibrary{decorations.markings}
\tikzset{
  negate/.style={
    decoration={
      markings,
      mark= at position 0.5 with {
        \node[transform shape] (tempnode) {$/$};
      },
    },
    postaction={decorate},
  },
}

\usepackage{dashbox}

\usepackage{chngcntr}

\usepackage{thmtools,thm-restate}

\usepackage{array}
\newcolumntype{x}[1]{>{\centering\arraybackslash\hspace{0pt}}p{#1}}
\usepackage{booktabs,colortbl,xcolor,xfrac}
\usepackage{multirow}
\usepackage{multicol}

\usepackage{multirow,hhline,graphicx,array}


\newcommand*{\cX}{\mathcal{X}}
\newcommand*{\cY}{\mathcal{Y}}
\newcommand*{\cH}{\mathcal{H}}
\newcommand*{\cG}{\mathcal{G}}
\newcommand*{\cF}{\mathcal{F}}

\newcommand*{\cU}{\mathcal{U}}
\newcommand*{\cL}{\mathcal{L}}
\newcommand*{\cC}{\mathcal{C}}
\newcommand*{\cE}{\mathcal{E}}
\newcommand*{\uu}{\prescript{u}{} }
\newcommand*{\Lin}{\mathsf{L}}
\newcommand*{\ALin}{\mathsf{A}}
\newcommand*{\BLin}{\mathsf{L}}
\newcommand*{\ABLin}{\mathsf{A}}
\newcommand*{\HS}{\mathsf{L}_{2}}
\newcommand*{\AHS}{\mathsf{A}_{2}}

\newcommand*{\fm}{\mathfrak{m}}
\newcommand*{\ff}{\mathfrak{f}}
\newcommand*{\fh}{\mathfrak{h}}

\newcommand*{\cJ}{\mathcal{J}}


\newcommand*{\bR}{\mathbb{R}}
\newcommand*{\bN}{\mathbb{N}}
\newcommand*{\bZ}{\mathbb{Z}}
\newcommand*{\bE}{\mathbb{E}}
\newcommand*{\bV}{\mathbb{V}}
\newcommand*{\bP}{\mathbb{P}}

\newcommand*{\cB}{\mathcal{B}}
\newcommand*{\cN}{\mathcal{N}}

\newcommand*{\Cov}{\mathbb{C}\mathrm{ov}}

\makeatletter
\newcommand*\quark{\mathpalette\quark@{.5}}
\newcommand*\quark@[2]{\mathbin{\vcenter{\hbox{\scalebox{#2}{$\; \m@th#1\bullet \;$}}}}}
\makeatother

\makeatletter
\newcommand{\mylabel}[2]{#2\def\@currentlabel{#2}\label{#1}}
\makeatother

\newcommand*{\rd}{\mathrm{d}}

\DeclareMathOperator{\trace}{tr}
\DeclareMathOperator{\spn}{span}
\DeclareMathOperator{\supp}{supp}
\DeclareMathOperator{\Id}{Id}

\DeclareMathOperator*{\argmin}{arg\,min}
\DeclareMathOperator{\ran}{ran}

\DeclareMathOperator{\dom}{dom}

\usepackage{pifont}

\definecolor{darkgreen}{rgb}{0,0.4,0}

\newcommand*{\todo}[1]{\bgroup\color{red}TODO:~#1\egroup}
\newcommand*{\tjs}[1]{\bgroup\color{olive}TJS:~#1\egroup}
\newcommand*{\bs}[1]{\bgroup\color{orange}BS:~#1\egroup}
\newcommand*{\ik}[1]{\bgroup\color{cyan}IK:~#1\egroup}

\DeclarePairedDelimiter\abs{\lvert}{\rvert}%
\DeclarePairedDelimiter\mynorm{\lVert}{\rVert}%
\makeatletter
\let\oldabs\abs
\def\abs{\@ifstar{\oldabs}{\oldabs*}}
\let\oldnorm\mynorm
\def\mynorm{\@ifstar{\oldnorm}{\oldnorm*}}
\makeatother

\newcommand*{\defeq}{\coloneqq}

\theoremstyle{plain}
\newtheorem{theorem}{\sffamily Theorem}[section]
\newtheorem{proposition}[theorem]{\sffamily Proposition}
\newtheorem{lemma}[theorem]{\sffamily Lemma}
\newtheorem{corollary}[theorem]{\sffamily Corollary}
\theoremstyle{definition}
\newtheorem{definition}[theorem]{\sffamily Definition}

\newtheorem{notation}[theorem]{\sffamily Notation}
\newtheorem{example}[theorem]{\sffamily Example}

\newtheorem{remark}[theorem]{\sffamily Remark}

\newtheorem{assumption}[theorem]{\sffamily Assumption}

\newcommand{\absval}[1]{\lvert #1 \rvert}
\newcommand{\innerprod}[2]{\langle #1 , #2 \rangle}
\newcommand{\norm}[1]{\lVert #1 \rVert}

\newcommand{\bignorm}[1]{\bigl\Vert #1 \bigr\Vert}

\newcommand{\Set}[2]{\left\{ #1 \,\middle\vert\, #2 \right\}}

\numberwithin{equation}{section}
\numberwithin{figure}{section}
\numberwithin{table}{section}

\definecolor{verylightgray}{rgb}{0.85,0.85,0.85}

\newcommand\mlnode[1]{\fbox{\begin{tabular}{@{}c@{}}#1\end{tabular}}} 
\newcommand\dashedmlnode[1]{\dbox{\begin{tabular}{@{}c@{}}#1\end{tabular}}}
\setlength{\fboxrule}{1pt}
\newcommand\filledmlnode[1]{\fcolorbox{black}{verylightgray}{\begin{tabular}{@{}c@{}}#1\end{tabular}}}
\definecolor{boxgreen}{rgb}{0,0.7,0}
\newcommand\greenmlnode[1]{\fcolorbox{boxgreen}{white}{\begin{tabular}{@{}c@{}}#1\end{tabular}}}
\definecolor{boxred}{rgb}{0.7,0,0}

\newcommand{\proofinappendix}
{
}

\usepackage{etex}
\usepackage[a4paper, top=88pt, bottom=88pt, left=72pt, right=72pt, headsep=16pt, footskip=28pt]{geometry}
\usepackage{hyperref}
\hypersetup{
	colorlinks,
	linkcolor=blue,
	citecolor=blue,
	urlcolor=blue,
}
\setlength{\bibsep}{0pt plus 0.3ex}
\newcommand*{\doi}[1]{\bgroup\color{blue}\href{https://doi.org/#1}{doi:#1}\egroup}
\newcommand*{\email}[1]{\bgroup\color{blue}\href{mailto:#1}{#1}\egroup}
\renewcommand*{\url}[1]{\bgroup\color{blue}\href{#1}{#1}\egroup}
\usepackage{enumitem, moreenum}
\setlist{nosep}
\usepackage{mleftright} \mleftright
\renewcommand{\qedsymbol}{$\blacksquare$}
\renewenvironment{proof}[1][\proofname]{\noindent{\bfseries\sffamily #1.} }{\hfill\qedsymbol\medskip}
\usepackage[labelfont={sf,bf}]{caption}
\usepackage{scrlayer-scrpage, xhfill}
\automark[section]{section}
\setkomafont{pageheadfoot}{\normalcolor\sffamily}
\setkomafont{pagenumber}{\normalfont\normalsize\sffamily}
\clearpairofpagestyles
\let\oldtitle\title
\renewcommand{\title}[1]{\oldtitle{#1}\newcommand{\theshorttitle}{#1}}
\newcommand{\shorttitle}[1]{\renewcommand{\theshorttitle}{#1}}
\let\oldauthor\author
\renewcommand{\author}[1]{\oldauthor{#1}\newcommand{\theshortauthor}{#1}}
\newcommand{\shortauthor}[1]{\renewcommand{\theshortauthor}{#1}}
\cohead{\xrfill[0.525ex]{0.6pt}~\theshorttitle~\xrfill[0.525ex]{0.6pt}}
\cehead{\xrfill[0.525ex]{0.6pt}~\theshortauthor~\xrfill[0.525ex]{0.6pt}}
\cfoot*{\xrfill[0.525ex]{0.6pt}~\pagemark~\xrfill[0.525ex]{0.6pt}}
\newcommand{\theabstract}[1]{\par\bgroup\noindent\textbf{\textsf{Abstract.}} #1\egroup}
\newcommand{\thekeywords}[1]{\par\smallskip\bgroup\noindent\textbf{\textsf{Keywords.}}\newcommand{\and}{ $\bullet$ } #1\egroup}
\newcommand{\themsc}[1]{\par\smallskip\bgroup\noindent\textbf{\textsf{2010 Mathematics Subject Classification.}}\newcommand{\and}{ $\bullet$ } #1\egroup}

\newcommand*{\affilref}[1]{\ref{affiliation#1}}
\newcommand*{\affiliation}[3]{
	\footnotetext[#1]{\label{affiliation#2}#3}
}

\setlist{topsep=0.3ex, itemsep=0.3ex}

\title{The Linear Conditional Expectation\\in Hilbert Space}
\shorttitle{The Linear Conditional Expectation in Hilbert Space}
\author{%
	Ilja~Klebanov\textsuperscript{\affilref{ZIB}}%
	\and
	Bj\"orn~Sprungk\textsuperscript{\affilref{Freiberg}}%
	\and
	T.~J.~Sullivan\textsuperscript{\affilref{ZIB},\affilref{Warwick}}%
}
\shortauthor{I.~Klebanov, B.~Sprungk, and T.~J.~Sullivan}
\date{\today}

\begin{document}
\maketitle
\affiliation{1}{ZIB}{Zuse Institute Berlin, Takustra{\ss}e 7, 14195 Berlin, Germany (\email{klebanov@zib.de}, \email{sullivan@zib.de})}
\affiliation{2}{Freiberg}{Technische Universit{\"a}t Bergakademie Freiberg,
09596 Freiberg, Germany\newline (\email{bjoern.sprungk@math.tu-freiberg.de})}
\affiliation{3}{Warwick}{Mathematics Institute and School of Engineering, The University of Warwick, Coventry, CV4 7AL, United Kingdom (\email{t.j.sullivan@warwick.ac.uk})}

\begin{abstract}
	\theabstract{The \emph{linear conditional expectation} (LCE) provides a best linear (or rather, affine) estimate of the conditional expectation and hence plays an important r\^ole in approximate Bayesian inference, especially the \emph{Bayes linear} approach.
This article establishes the analytical properties of the LCE in an infinite-dimensional Hilbert space context.
In addition, working in the space of affine Hilbert--Schmidt operators, we establish a regularisation procedure for this LCE.
As an important application, we obtain a simple alternative derivation and intuitive justification of the \emph{conditional mean embedding} formula, a concept widely used in machine learning to perform the conditioning of random variables by embedding them into reproducing kernel Hilbert spaces.
}

	\thekeywords{Bayes linear analysis%
\and
conditional mean embedding%
\and
reproducing kernel Hilbert space%
\and
linear conditional expectation%
}
	\themsc{46E22
\and
28C20
\and
62C10
\and
62J05
\and
62G05
\end{abstract}



\section{Introduction}
\label{section:Introduction}

The crucial step in most inference problems is the approximation of the conditional expectation $\bE[U|V]$, where $U\in L^2(\Omega,\Sigma,\bP;\cG)$ and $V\in L^2(\Omega,\Sigma,\bP;\cH)$ are random variables over some probability space $(\Omega,\Sigma,\bP)$ taking values in some separable Hilbert spaces $\cG$ and $\cH$, respectively.
In Bayesian statistics, where it relates to the posterior mean, $\bE[U|V]$ is an important point estimator of the inferred parameter.
It is well known\footnote{\label{footnote:best_approximation}For $\bR$-valued random variables see e.g.\ \citet[Theorem~10.2.9]{dudley2002real}; the general case follows by choosing orthonormal bases.}
that $\bE[U|V]$ is the best approximation of $U$ by a $\sigma(V)$-measurable random variable within $L^2(\Omega,\sigma(V),\bP;\cG)$ (i.e.\ the orthogonal projection of $U$ onto $L^2(\Omega,\sigma(V),\bP;\cG)$),
\begin{equation}
	\label{equ:ConditionalExpectationMinimizesL2loss}
	\bE[U|V]
	=
	\argmin_{\tilde{U} \in L^2(\Omega,\sigma(V);\cG)} \norm{U - \tilde{U}}_{ L^2(\Omega,\Sigma,\bP;\cG)}
	=
	\argmin_{\tilde{U} \in L^2(\Omega,\sigma(V);\cG)} \bE \bigl[ \norm{U - \tilde{U}}_{\cG}^{2} \bigr].
\end{equation}
By the Doob--Dynkin representation \citep[Lemma~1.13]{kallenberg2006foundations}, the conditional expectation can therefore be rewritten in the form
\begin{equation}
	\label{equ:DoobDynkinAppliedToConditionalExpectation}
	\bE[U|V]
	=
	\gamma_{U|V} \circ V
	\quad
	\text{$\bP$-almost surely,}
\end{equation}
where $\gamma_{U|V}\colon \cH\to\cG$ is a measurable map which we will call the \emph{conditional expectation function} (CEF).
In the language of statistical learning theory (or statistical decision theory), $\gamma_{U|V}$ is called the \emph{regression function} and constitutes a Bayes predictor for the least squares error loss, i.e.\ the predictor with the minimal risk \citep[Section~2.4]{hastie2009elements}, which follows directly from \eqref{equ:ConditionalExpectationMinimizesL2loss}.

While computing $\gamma_{U|V}$, which is the main object of interest, is infeasible in most applications, various estimates can be constructed.
The most prominent approach is to approximate $\gamma_{U|V}$ within the class $\ABLin(\cH;\cG)$ of bounded affine operators\footnote{\label{footnote:TerminologyClashLinearAffine}We note here an unfortunate but seemingly unavoidable clash of terminology:
while the approximate conditional expectation \eqref{equ:CommonLCMformula} is usually called the \emph{linear} conditional expectation in the literature, it in fact corresponds to approximation using \emph{affine} operators.} from $\cH$ to $\cG$, since this provides an explicit formula for the \emph{linear conditional expectation function} (LCEF) $\gamma_{U|V}^{\ABLin}$ under appropriate conditions \citep[Lemma~4.1]{ernst2015analysis}:
\begin{equation}
	\label{equ:CommonLCMformula}
	\gamma_{U|V}^{\ABLin}(v)
	=
	\mu_U + C_{UV} C_{V}^{-1}(v - \mu_V),
\end{equation}
where $\mu_U$ and $\mu_V$ denote the means and $C_{UV}$ and $C_{V}$ denote the cross-covariance and covariance operators of $U$ and $V$, as defined in \Cref{section:GeneralSetupAndNotation}.

\begin{figure}[t]
	\centering
	\begin{subfigure}[b]{0.45\textwidth}
		\centering
		\includegraphics[width=\textwidth]{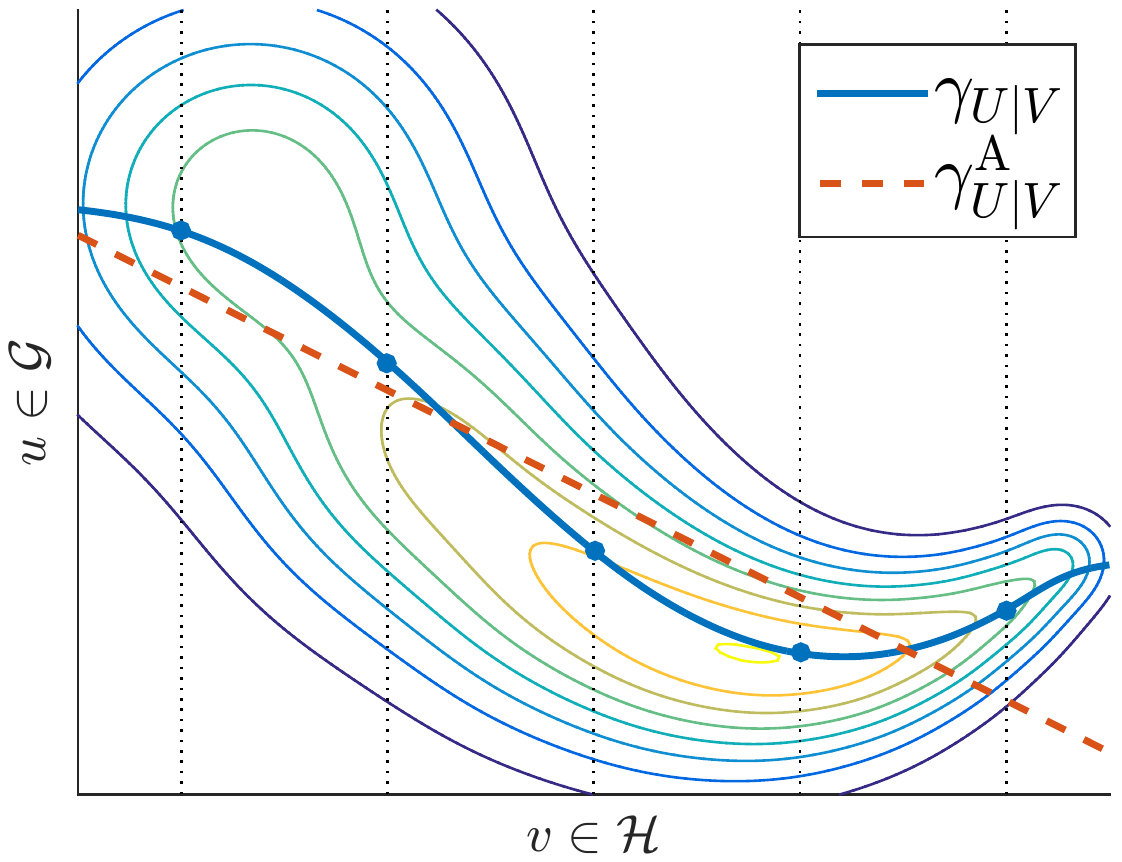}
	\end{subfigure}
	\hspace{1em}
	\begin{subfigure}[b]{0.45\textwidth}
		\centering
		\includegraphics[width=\textwidth]{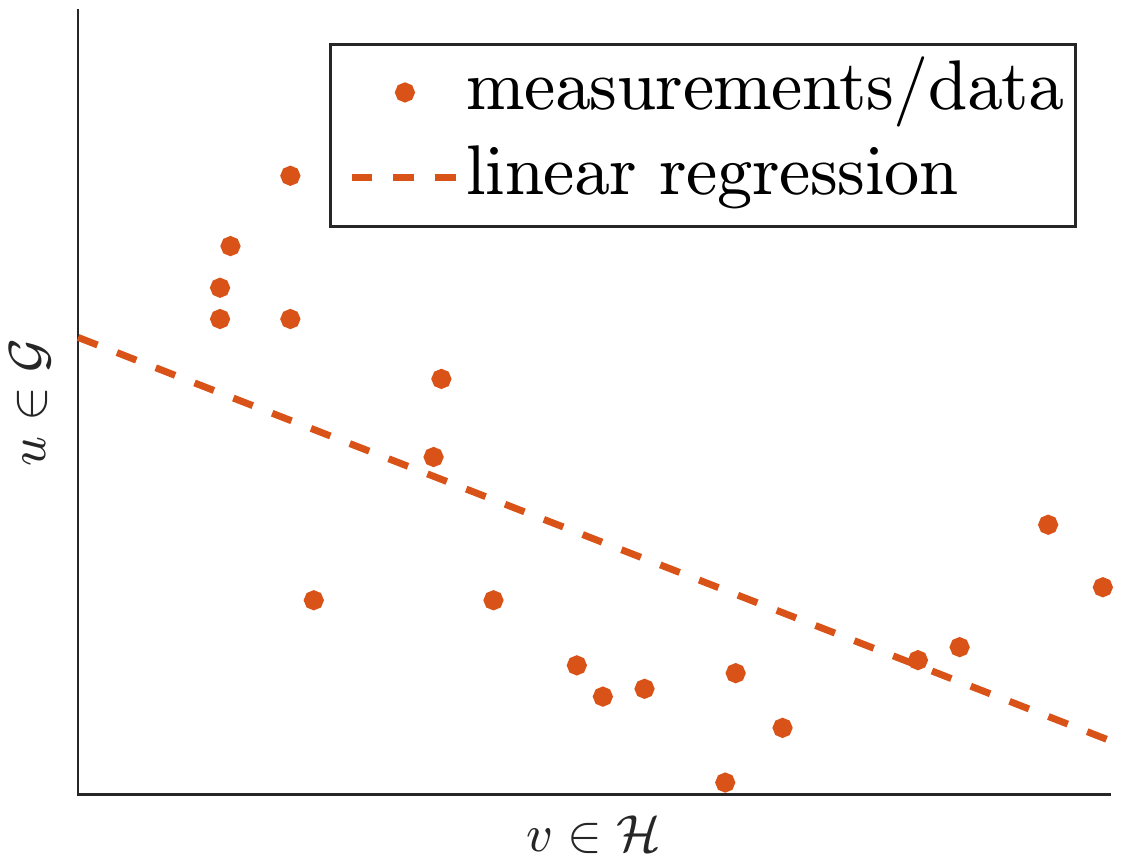}
	\end{subfigure}
	\caption{
		\emph{Left:} Comparison of the conditional expectation function (CEF) $\gamma_{U|V} \colon \cH \to \cG$ and the linear conditional expectation function (LCEF) $\gamma_{U|V}^{\ABLin} \in \ABLin (\cH; \cG)$. The contour plot shows the probability density $\rho_{VU}$ of $(V,U)$.
		\emph{Right:} For an empirical probability distribution (e.g.\ given by data), the LCEF coincides with the solution to the linear least squares regression problem.
	}
	\label{fig:VizualizationCMandLCM}
\end{figure}

While the \emph{linear conditional expectation} (LCE) $\bE^{\ABLin}[U|V]\defeq \gamma_{U|V}^{\ABLin} \circ V$ (also known as \emph{Bayes linear estimator} or \emph{adjusted expectation}) has been discussed extensively by Michael Goldstein and his collaborators in the framework of Bayes linear statistics mostly from an application point of view \citep{GoldsteinWooff2007}, a rigorous mathematical analysis of the LCE is yet to be established, especially for the case of infinite-dimensional $\cG$ and $\cH$.
This level of generality, which this article seeks to provide, yields not just a satisfying mathematical theory but is also necessary for the application of LCE-type methods to problems with high-dimensional unknowns or data, such as time series and functional data analysis.

The first contribution of this paper is to fill this theoretical gap by studying the properties of the LCE and generalising formula \eqref{equ:CommonLCMformula} to infinite-dimensional Hilbert spaces.
Thus far, \eqref{equ:CommonLCMformula} has been derived under the assumptions that $\cH$ is finite-dimensional and that $C_{V}$ is invertible \citep[Section~4]{ernst2015analysis}.
In addition, working in the spaces of (affine) Hilbert--Schmidt operators, we establish a rigorous justification for the regularised version of \eqref{equ:CommonLCMformula}.

Our second contribution is a simple alternative derivation and intuitive explanation of the widely used formula for the conditional mean embedding (CME), a method used in machine learning to perform the conditioning of random variables by embedding them into RKHSs, where it reduces to an affine transformation similar to \eqref{equ:CommonLCMformula}
\citep{song2009hilbert,fukumizu2013kernel,klebanov2019rigorous}.
This result follows almost directly from the fact that, by the reproducing property,
$\bE[U|V]$ coincides with its best affine approximation $\bE^{\ABLin}[U|V]$.

Note that this paper considers only centered \mbox{(cross-)}covariance operators defined by \eqref{equ:DefinitionCovarianceOperators}.
Some, but not all, of the results can be proven similarly for uncentered operators defined by \eqref{equ:DefinitionUncentredCovarianceOperators}, the theory for which is less general, since it allows only for strictly linear instead of affine approximations, i.e., one would be restricted to fitting the probability density or data points in \Cref{fig:VizualizationCMandLCM} with a straight line through the origin.

This paper is structured as follows.
\Cref{section:RelatedWork} briefly surveys related work in statistics, machine learning, and dynamical systems.
\Cref{section:GeneralSetupAndNotation} establishes notation and standing assumptions for the remainder of the paper.
\Cref{section:LinearConditionalMean} forms the core of the paper, in which we study the rigorous generalisation of the LCE to the infinite-dimensional Hilbert space context and also consider multiple formulations of the linear conditional covariance operator.
We analyse their basic properties (\Cref{theorem:BasicPropertiesLCM,counterexample:CounterexamplesPropertiesLCM}) and derive explicit formulae for them in several regimes (\Cref{theorem:LinearConditionalMeanUnderRangeAssumption,theorem:LinearConditionalMeanIncompatibleCase,theorem:LinearConditionalMeanRegularized}).
In \Cref{section:ApplicationToCMEsAlternative,section:GaussianConditioning} these ideas are applied to kernel conditional mean embeddings of random variables into RKHSs and to the conditioning of infinite-dimensional Gaussian random vectors, respectively.
Some closing remarks are given in \Cref{section:ClosingRemarks}, after which all the proofs of results in the main text are given in \Cref{section:Proofs}.
\Cref{section:TechnicalResults} contains technical supporting results.

\section{Related Work}
\label{section:RelatedWork}

Formula \eqref{equ:CommonLCMformula} is the fundamental solution in linear least squares regression (or general linear models) and can be interpreted as the \emph{best linear unbiased estimate} (BLUE);
see e.g.\ \citet[Sections~2.3.1 and 3.2]{hastie2009elements}.
\Cref{fig:VizualizationCMandLCM} illustrates the connection between $\gamma_{U|V}^{\ABLin}$ and linear regression:
the two coincide if the probability distribution $\bP_{VU}$ of $(V,U)$ is an empirical distribution $\bP_{VU} = J^{-1}\sum_{j=1}^{J} \delta_{(v_j,u_j)}$, where $(v_j,u_j)$, $j=1,\dots,J$, are (or can be thought of as) measurements or data points.

Apart from the connection to linear regression, this work is related to several fields of applied mathematics.
First and foremost, it should be seen as a systematic and rigorous treatment as well as an extension of \emph{Bayes linear analysis}, which has been introduced and investigated by Michael Goldstein and his collaborators, see e.g.\ \citet{Goldstein1999} and \citet{GoldsteinWooff2007} and the references therein.
Furthermore, \citet[p.~9]{Stein1999} offers a Bayesian interpretation of the BLUE in special cases, namely that in the ``uninformative'' infinite-variance limit of a Gaussian prior, the limiting posterior is Gaussian with the BLUE as its conditional expectation.

The LCE is applied in a variety of fields:

In geostatistics, the LCE appears in form of the Kriging estimate for the value of a random field at unexplored locations given available (noisy) data of the random field at measurement locations \citep{ChilesDelfiner2012,Stein1999}.

In data assimilation, formula \eqref{equ:CommonLCMformula} defines the update scheme of the K\'alm\'an filter and its many variants, including the ensemble K\'alm\'an filter \citep{Evensen2009b}.
Although this update rule is typically interpreted as a Gaussian approximation ---  
``in the large ensemble size limit the EnKF [\dots]\ does not reproduce the filtering distribution, except in the linear Gaussian case'' \citep{Schillings2017} --- it has been argued by \citet[Section~4]{ernst2015analysis} that it should rather be seen as the best linear approximation of the required conditional expectations.

In machine learning, the method of \emph{conditional mean embedding} (CME; \citealp{song2009hilbert,fukumizu2013kernel}) applies the conditioning formula \eqref{equ:CommonLCMformula} to random variables embedded into RKHSs, where it becomes exact (i.e.\ $\gamma_{U|V}^{\ABLin} = \gamma_{U|V}$) under certain conditions;
see \citet{klebanov2019rigorous}.
\Cref{section:ApplicationToCMEsAlternative} provides an alternative derivation of the CME formula based on linear conditional expectations and thereby a natural justification of CMEs based in BLUEs;
to the best of our knowledge, this connection has not been made before.

In the field of dynamical systems, the LCE is an important estimate of the \emph{Koopman operator}
\begin{align*}
	\mathcal{K}_{\tau} & \colon L^{\infty}(\cX) \to L^{\infty}(\cX),
	&
	\mathcal{K}_{\tau} f (x)
	& \defeq
	\bE[f(X_{t+\tau}) | X_{t} = x] ,
\end{align*}
of a time-homogeneous Markov chain $(X_t)_{t\in\mathcal{T}}$, where $\tau>0$, $\cX\subseteq\bR^{d}$, and $\mathcal{T}$ is typically $\bZ$, $\bN$, $\bR$, or $[0, \infty)$.
Independently of one another, the dynamical systems, fluid dynamics, and molecular dynamics communities have developed data-driven dimensionality reduction techniques based on the eigendecomposition of this approximation, resulting in the methods of \emph{time-lagged independent component analysis} (TICA) and \emph{dynamic mode decomposition} (DMD).
More generally, the data can be transformed to some feature space $\tilde{\cX}$ by a feature map $\psi\colon\cX\to \tilde{\cX}$ prior to computing the linear approximation, resulting in the \emph{variational approach of conformation dynamics} (VAC) or \emph{empirical dynamic mode decomposition} (EDMD).
The connections among these methods have been discussed in detail by \citet{klus2018data};
see also the references therein to the original papers on these methods.
As in the case of CMEs, if the feature map $\psi$ is the canonical feature map $\psi(x) = k(x,\quark)$ of an RKHS $\cH$ with reproducing kernel $k$, then our analysis is applicable and it reveals the exactness of conditioning formula \eqref{equ:CommonLCMformula}, thereby annihilating one of the main sources of error --- the other being the approximation of the means and covariance operators $\mu_U$, $\mu_V$, $C_{V}$, and $C_{UV}$.
The resulting kernelised versions of VAC and EDMD have been studied by \citet{schwantes2015modeling} and \citet{klus2020eigendecomposition};
the exactness of \eqref{equ:CommonLCMformula}, which we establish, strengthens the analytical power of these methods.
We also mention that there are time-inhomogeneous variants of these methods, namely \emph{coherent mode decomposition} (CMD) and the \emph{variational approach for Markov processes} (VAMP) and their kernelised version \emph{kernel canonical correlation analysis} (kernel CCA; \citealp{klus2019kernel});
again, the established exactness of formula \eqref{equ:CommonLCMformula} in RKHSs can be exploited for analysing these approaches.

\section{Preliminaries and Notation}
\label{section:GeneralSetupAndNotation}


This paper will make use of various standing assumptions and items of notation, which we collect here for easy reference.

Throughout, $(\cF, \innerprod{ \quark }{ \quark }_{\cF}), (\cG, \innerprod{ \quark }{ \quark }_{\cG})$, and $(\cH, \innerprod{ \quark }{ \quark }_{\cH})$ will be separable real Hilbert spaces and $U\in L^2(\Omega, \Sigma, \bP;\cG)$, $V\in L^2(\Omega, \Sigma, \bP;\cH)$, and $W\in L^2(\Omega, \Sigma, \bP;\cF)$, where $(\Omega, \Sigma, \bP)$ is a fixed probability space.

$\cG$-valued expected values $\mu_U \defeq \bE[U] \defeq \int_{\Omega} U(\omega) \, \rd \bP (\omega)$ are always meant in the sense of a Bochner integral \citep[Chapter~II]{diestel1977}, as are the cross-covariance operators
\begin{equation}
\label{equ:DefinitionCovarianceOperators}
C_{UV}
\defeq
\Cov[U, V]
\defeq
\bE[(U - \bE[U]) \otimes (V - \bE[V])]
=
\bE[ U \otimes V ] - \bE[U] \otimes \bE[V]
\end{equation}
from $\cH$ into $\cG$, where, for $h \in \cH$ and $g \in \cG$, the outer product $g \otimes h \colon \cH \to \cG$ is the rank-one linear operator $(g \otimes h) (h') \defeq \innerprod{ h }{ h' }_{\cH}\, g$.
Naturally, we write $C_{U} = \Cov[U]$ for the covariance operator $\Cov[U, U]$, which is self-adjoint, non-negative, and trace-class \citep{baker1973joint, sazonov1958}, and all of the above reduces to the usual definitions in the scalar-valued case.
Using \Cref{theorem:BakerDecompositionCrosscovarianceOperator} and \citet[Lemmas~16.7 and 16.21]{meise1997introduction}, it follows that the cross-covariance operators $C_{UV}$ and $C_{VU} = C_{UV}^{\ast}$ are also trace-class (and in particular Hilbert--Schmidt) operators.
In \Cref{section:ApplicationToCMEsAlternative} we will briefly consider \emph{uncentred} \mbox{(cross-)}covariance operators
\begin{equation}
	\label{equ:DefinitionUncentredCovarianceOperators}
	\uu{C}_{UV}
	\defeq
	\uu{\Cov}[U, V]
	\defeq
	\bE[U\otimes V],
	\qquad
	\uu{C}_{U}
	\defeq
	\uu{\Cov}[U]
	\defeq
	\uu{\Cov}[U,U].
\end{equation}

The orthogonal projection onto a closed linear subspace $F$ of a Hilbert space $\cH$ will be denoted by $P_{F}^{\cH}$, or just $P_{F}$ whenever $\cH$ is clear from context.
Further, we abbreviate
$L^2(\Omega, \Sigma, \bP;\cG)$ by $L^2(\bP;\cG)$ and further by $L^2(\bP)$ if $\cG = \bR$; and
$L^2(\Omega,\tilde{\Sigma},\bP|_{\tilde{\Sigma}};\cG)$ by $L^2(\Omega,\tilde{\Sigma};\cG)$ for any sub-$\sigma$-algebra $\tilde{\Sigma}\subseteq \Sigma$.
$\bP_{X}$ denotes the distribution of a random variable $X\colon \Omega\to\cX$, i.e.\ the pushforward $X_{\#}\bP$ of $\bP$ under $X$.

For a linear operator $A \colon \cH \to \cG$ between Hilbert spaces $\cH$ and $\cG$, its Moore--Penrose pseudo-inverse $A^{\dagger} \colon \dom A^{\dagger} \to \cH$ is the unique extension of
\[
	A |_{(\ker A)^{\perp}}^{-1} \colon \ran A \to (\ker A)^{\perp}
\]
to a linear operator $A^{\dagger}$ defined on $\dom A^{\dagger} \defeq (\ran A) \oplus (\ran A)^{\perp} \subseteq \cG$ subject to the criterion that $\ker A^{\dagger} = (\ran A)^{\perp}$.
In general, $\dom A^{\dagger}$ is a dense but proper subpace of $\cG$ and $A^{\dagger}$ is an unbounded operator;
global definition and boundedness of $A^{\dagger}$ occur precisely when $\ran A$ is closed in $\cG$ \citep[Section~2.1]{engl1996regularization}.

The following spaces of linear and affine operators from $\cH$ to $\cG$ will play a fundamental r\^ole in the approximation of $\gamma_{U|V}$.

\begin{definition}
	\label{def:AllRelevantSpaces}
	Let $\cH$, $\cG$, and $V$ be as above.
	We define the following spaces of linear and affine operators from $\cH$ to $\cG$:
	\begin{align*}
		\BLin(\cH;\cG)
		&\defeq
		\{\gamma \colon \cH \to \cG \mid \gamma \text{ is a bounded linear operator} \},
		\\
		\ABLin(\cH;\cG)
		&\defeq
		\{\gamma \colon \cH \to \cG \mid \gamma(h) = b+Ah \text{ for some } b\in\cG, A \in \BLin(\cH;\cG) \},
		\\[1ex]
		\Lin_V(\cH;\cG)
		&\defeq
		\{\gamma \colon \cH \to \cG \mid \gamma \text{ is linear and } \gamma\circ V \in L^2(\bP;\cG) \},
		\\
		\ALin_V(\cH;\cG)
		&\defeq
		\{\gamma \colon \cH \to\cG \mid \gamma(h) = b+Ah \text{ for some } b\in\cG, A \in \Lin_V(\cH;\cG) \},
		\\[1ex]
		\HS(\cH;\cG)
		&\defeq
		\{\gamma \colon \cH \to \cG \mid \gamma \text{ is a Hilbert--Schmidt operator} \},
		\\
		\AHS(\cH;\cG)
		&\defeq
		\{\gamma \colon \cH \to \cG \mid \gamma(h) = b+Ah \text{ for some } b\in\cG, A \in \HS(\cH;\cG) \}.
		\intertext{Note well that elements of $\Lin_V (\cH;\cG)$ and $\ALin_V (\cH;\cG)$ may be unbounded operators, although their unboundedness is in some sense restricted by the square-integrability requirement.
		For any collection $\mathsf{\Gamma}$ of affine or linear operators $\gamma\colon \cH \to \cG$ we set $\mathsf{\Gamma}\circ V \defeq \{ \gamma\circ V \mid \gamma\in\mathsf{\Gamma} \}$ and}
		\overline{\BLin_{\cG} \circ V}
		&\defeq
		\overline{\BLin(\cH;\cG)\circ V}^{ L^2(\Omega, \Sigma, \bP;\cG)},
		\qquad
		\overline{\ABLin_{\cG} \circ V}
		\defeq
		\overline{\ABLin(\cH;\cG)\circ V}^{ L^2(\Omega, \Sigma, \bP;\cG)}.
	\end{align*}
\end{definition}

Here and henceforth, overlines and superscripts denote topological closures.
The operator norm will be denoted by $\norm{\quark}$.
For any affine operator $\gamma\in\ALin_V(\cH;\cG)$, $\gamma(h) = b+Ah,\, b\in\cG,\, A \in \Lin_V(\cH;\cG)$, we define the ``non-affine part'' by $\overline{\gamma}\defeq A$.
The Hilbert--Schmidt inner product will be denoted by $\innerprod{\gamma_1}{\gamma_2}_{\HS} \defeq \trace(\gamma_1^{\ast}\gamma_2) = \trace(\gamma_1\gamma_2^{\ast})$, where $\gamma_1,\gamma_2\in \HS(\cH;\cG)$, and the corresponding norm by $\norm{\quark}_{\HS}$.
Further, for $\gamma,\gamma'\in \AHS(\cH;\cG)$,
we define the seminorm $\norm{\gamma}_{\AHS}\defeq \norm{\overline{\gamma}}_{\HS}$ and the semi-inner product $\innerprod{\gamma}{\gamma'}_{\AHS} \defeq \innerprod{\overline{\gamma}}{\overline{\gamma}'}_{\HS}$.

\begin{proposition}
	\label{prop:CharacterizeClosureOfBcircV}
	With the notation above,
	\[
		\overline{\BLin_{\cG} \circ V}
		\subseteq
		\Lin_V(\cH;\cG) \circ V,
		\qquad
		\overline{\ABLin_{\cG} \circ V}
		\subseteq
		\ALin_V(\cH;\cG) \circ V.
	\]
\end{proposition}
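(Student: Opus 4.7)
My plan is to prove both inclusions by an almost-sure extraction followed by a Hamel-basis extension, reducing the affine case to the linear one by centering.

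\textbf{Linear case.} Suppose $Y \in \overline{\BLin_{\cG} \circ V}$ and pick $(\gamma_n)_{n\in\bN} \subseteq \BLin(\cH;\cG)$ with $\gamma_n \circ V \to Y$ in $L^2(\bP;\cG)$. Passing to a subsequence, there is a full-measure set $\Omega_0 \subseteq \Omega$ with $\gamma_n(V(\omega)) \to Y(\omega)$ for every $\omega \in \Omega_0$. On $M \defeq V(\Omega_0) \subseteq \cH$ I define $\tilde{\gamma}(h) \defeq Y(\omega)$ for any $\omega \in \Omega_0$ with $V(\omega)=h$; this is independent of the chosen preimage, since whenever $V(\omega_1)=V(\omega_2)=h$ with $\omega_i\in\Omega_0$ both values equal $\lim_n \gamma_n h$. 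I then extend $\tilde{\gamma}$ linearly to $\spn M$: for any finite relation $\sum_i c_i h_i = 0$ with $h_i \in M$ and $c_i\in\bR$, linearity of each $\gamma_n$ gives $\sum_i c_i \tilde{\gamma}(h_i) = \lim_n \gamma_n\bigl(\sum_i c_i h_i\bigr) = 0$, so the extension is consistent. Finally I pick a Hamel basis of $\cH$ that contains a Hamel basis of $\spn M$ and define $\gamma \colon \cH \to \cG$ to agree with $\tilde{\gamma}$ on the first part and to be zero on the complementary basis vectors. Then $\gamma$ is linear on all of $\cH$ and $\gamma \circ V = Y$ $\bP$-almost surely, so $\gamma \circ V \in L^2(\bP;\cG)$, $\gamma \in \Lin_V(\cH;\cG)$, and $Y \in \Lin_V(\cH;\cG) \circ V$.

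\textbf{Affine case.} Now take $Y \in \overline{\ABLin_{\cG} \circ V}$ and write $\gamma_n(h) = b_n + A_n h$ with $b_n \in \cG$, $A_n \in \BLin(\cH;\cG)$, and $\gamma_n \circ V \to Y$ in $L^2(\bP;\cG)$. Since $L^2$-convergence entails convergence of means, $b_n + A_n \mu_V = \bE[\gamma_n V] \to \bE Y$ in $\cG$, and consequently
\[
	A_n (V - \mu_V) \;=\; \gamma_n V - \bE[\gamma_n V] \;\longrightarrow\; Y - \bE Y \quad \text{in } L^2(\bP;\cG).
\]
Applying the linear case to the bounded operators $A_n$ and the random variable $V - \mu_V$ yields a linear $A \colon \cH \to \cG$ with $A(V - \mu_V) = Y - \bE Y$ $\bP$-almost surely. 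Because $A$ is defined on all of $\cH$, the vector $A \mu_V \in \cG$ is well-defined and $A V = A(V - \mu_V) + A\mu_V \in L^2(\bP;\cG)$, so $A \in \Lin_V(\cH;\cG)$. Setting $\gamma(h) \defeq (\bE Y - A\mu_V) + A h$ gives $\gamma \in \ALin_V(\cH;\cG)$ with $\gamma \circ V = Y$ $\bP$-almost surely, proving the second inclusion.

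\textbf{Main obstacle.} The delicate step is the passage from the pointwise almost-sure limit on $M = V(\Omega_0)$ to a bona fide linear map on the whole of $\cH$: $M$ need not be a subspace and $V$ need not be injective, so one must verify that the extension to $\spn M$ is well-defined through linear relations among elements of $M$ before invoking a Hamel basis (and hence the axiom of choice) to extend globally. Once this is handled, the affine case follows almost mechanically by centering.
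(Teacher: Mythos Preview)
Your proof is correct and follows essentially the same route as the paper: extract an almost-surely convergent subsequence, build a linear map on a subspace from the limit, and extend to all of $\cH$ via a Hamel basis, handling the affine case by centering. The only cosmetic difference is that the paper defines the domain of the limiting map as $\cH_{0}\defeq\{h\in\cH : \overline{\gamma}_{n_k}(h)\text{ converges}\}$, which is automatically a linear subspace and so sidesteps your consistency check on $\spn M$; otherwise the arguments are the same.
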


\proofinappendix


\section{Linear Conditional Expectation and Covariance}
\label{section:LinearConditionalMean}

It is well known that the conditional expectation $\bE[U|V]$ is the orthogonal projection of $U$ onto $L^2(\Omega, \sigma(V), \bP; \cG)$;
see \Cref{footnote:best_approximation}.
Since $\bE[U|V]$ is $\sigma(V)$-measurable, the Doob--Dynkin lemma \citep[Lemma 1.13]{kallenberg2006foundations} implies the existence of a Borel-measurable function $\gamma_{U|V}\colon \cH\to\cG$ such that $\bE[U|V] = \gamma_{U|V}\circ V$ a.s.
In particular, $\gamma_{U|V}$ minimizes the functional
\begin{equation}
	\label{equ:FunctionalConditionalExpectation}
	\cE_{U|V}(\gamma)
	\defeq
	\norm{U - \gamma\circ V}_{ L^2(\Omega,\Sigma,\bP;\cG)}^{2}
	=
	\bE \bigl[ \norm{U - \gamma\circ V}_{\cG}^{2} \bigr]
\end{equation}
within the class of Borel-measurable functions $\gamma\colon\cH\to \cG$.
Since $\bE[U|V]$ is unique (as an orthogonal projection), $\gamma_{U|V}$ is unique $\bP_{V}$-a.e.\ and we set $\bE[U|V=v] \defeq \gamma_{U|V}(v)$ for $v\in\cH$.

It seems natural to define the best linear approximation (see \Cref{footnote:TerminologyClashLinearAffine}) of the conditional expectation as $\bE^{\ABLin}[U|V] = \gamma_{U|V}^{\ABLin}\circ V$, where $\gamma_{U|V}^{\ABLin}$ minimizes $\cE_{U|V}(\gamma)$ within the class $\ABLin(\cH;\cG)$ of bounded affine operators,
in other words, as the $L^2(\bP;\cG)$-orthogonal projection of $U$ onto $\ABLin(\cH;\cG)\circ V$.
Since this space is not closed in $L^2(\bP;\cG)$, the proper definition uses the projection onto its closure.
In line with the definition of the conditional covariance operator
\begin{equation}
	\label{equ:ClassicalConditionalCovarianceOperator}
	\Cov[U,W|V]
	\defeq
	\bE\bigl[ R[U|V] \otimes R[W|V] \, \big| \, V\bigr],
	\qquad
	R[U|V]
	\defeq
	U - \bE[U|V],
\end{equation}
we further define the linear conditional covariance operator $\Cov^{\ABLin}[U,W|V]$ as follows.

\begin{definition}
	With the notation of \Cref{section:GeneralSetupAndNotation}, define the \emph{linear conditional expectation} (LCE) $\bE^{\ABLin}[U|V]$ (also called \emph{adjusted expectation}, \citealp[Section 3.1]{GoldsteinWooff2007}), the \emph{linear conditional residual} $R^{\ABLin}[U|V]$, and the \emph{linear conditional covariance operator} (LCC)
	$\Cov^{\ABLin}[U,W|V]$ of $U$ given $V$ by
	\begin{align*}
		\bE^{\ABLin}[U|V]
		&\defeq
		P_{\overline{\ABLin_{\cG} \circ V}}U,
		\\
		R^{\ABLin}[U|V]
		& \defeq
		U - \bE^{\ABLin}[U|V],
		\\
		\Cov^{\ABLin}[U,W|V]
		& \defeq
		\bE^{\ABLin} \bigl[ R^{\ABLin}[U|V] \otimes R^{\ABLin}[W|V] \, \big| \, V\bigr].
		\intertext{Further, define the \emph{average linear conditional covariance operator} (ALCC) by}
		\Cov^{\ABLin}_{V}[U,W]
		& \defeq
		\bE\bigl[ R^{\ABLin}[U|V] \otimes R^{\ABLin}[W|V] \bigr].
	\end{align*}
	By \Cref{prop:CharacterizeClosureOfBcircV}, $\bE^{\ABLin}[U|V]$ will be of the form $\gamma_{U|V}^{\ABLin}\circ V$, where $\gamma_{U|V}^{\ABLin} \in \ALin_V(\cH;\cG)$ is unique $\bP_{V}$-a.e.\ and will be referred to as the \emph{linear conditional expectation function} (LCEF).
	As usual, we set $\Cov [U|V] \defeq \Cov [U,U|V]$, $\Cov^{\ABLin}[U|V] \defeq \Cov^{\ABLin}[U,U|V]$, and $\Cov_{V}^{\ABLin}[U] \defeq \Cov_{V}^{\ABLin}[U,U]$.
\end{definition}

Since $\bE^{\ABLin}[U|V]$ and $\Cov^{\ABLin}[U|V]$ are defined as $L^2(\bP;\cG)$-orthogonal projection, all statements and identities in the following subsections only hold $\bP$-a.s.

\begin{remark}
	\label{remark:ComparisonOfConditionalCovarianceOperatorDefinitions}
	\citet[Section 3.3]{GoldsteinWooff2007} call $\Cov^{\ABLin}_{V}[U,W]$ the \emph{adjusted covariance} and argue that this is the proper way to define the linear analogue of the conditional covariance.
	However, this definition is not in line with the classical conditional covariance \eqref{equ:ClassicalConditionalCovarianceOperator} because it fails to condition on $V$ a second time;
	see \Cref{example:AverageConditionalCovariance} below.
	Note that the ALCC $\Cov^{\ABLin}_{V}[U,W]$ is therefore not a random variable but rather the expected value of the LCC $\Cov^{\ABLin}[U,W|V]$, hence our term ``average linear conditional covariance'';
	see \Cref{thm:PropertiesLinearConditionalCovariance}, where we also show that it coincides with the well-known Gaussian conditional covariance formula,
	$\Cov^{\ABLin}_{V}[U,W] = C_{UW} - C_{UV} C_{V}^{\dagger} C_{VW}$ (and similarly its more general version in the incompatible case).
	While $\Cov^{\ABLin}_{V}[U,U]$ is always non-negative (see \Cref{thm:PropertiesLinearConditionalCovariance}\ref{item:UpperBoundExpectedConditionalCovariance}), $\Cov^{\ABLin}[U,W|V]$ can take on negative values (see \Cref{counterexample:CounterexamplesPropertiesLCM}\ref{item:NonNegativityLCC}).
\end{remark}

\begin{example}
	\label{example:AverageConditionalCovariance}
	Consider the following simple example of an LCE and an LCC.
	Let $\cH = \cG = \bR$, let $\bP$ be the uniform distribution on $\Omega = \{ 1,2,3,4 \}$, and let $V$ and $U$ be as defined below and illustrated in \Cref{fig:VisualizationConditionalCovariance}.
	\begin{center}
		\begin{tabular}{x{3em} x{3em} x{3em} x{7em} x{7em}}
			\toprule
			$\omega\in\Omega$ & $V(\omega)$ & $U(\omega)$ & $\bE^{\ABLin}[U|V](\omega)$ & $\Cov^{\ABLin}[U|V](\omega)$
			\\
			\midrule
			$1$ & $0$ & $\phantom{-}1$ & $0$ & 1 \\
			$2$ & $0$ & $-1$ & $0$ & 1 \\
			$3$ & $1$ & $\phantom{-}2$ & $0$ & 4 \\
			$4$ & $1$ & $-2$ & $0$ & 4 \\
			\bottomrule
		\end{tabular}
	\end{center}
	By symmetry, $\bE^{\ABLin}[U|V] = \bE[U|V] = 0$ and, since $(V,R^{\ABLin}[U|V]^{2})$ takes on only two values, $\Cov^{\ABLin}[U|V]$ coincides with the classical conditional covariance $\Cov[U|V]$.
	The ALCC $\Cov^{\ABLin}_{V}[U] = \nicefrac{5}{2}$ only captures its expected value.
	\begin{figure}[t]
		\centering
		\includegraphics[width=0.45\textwidth]{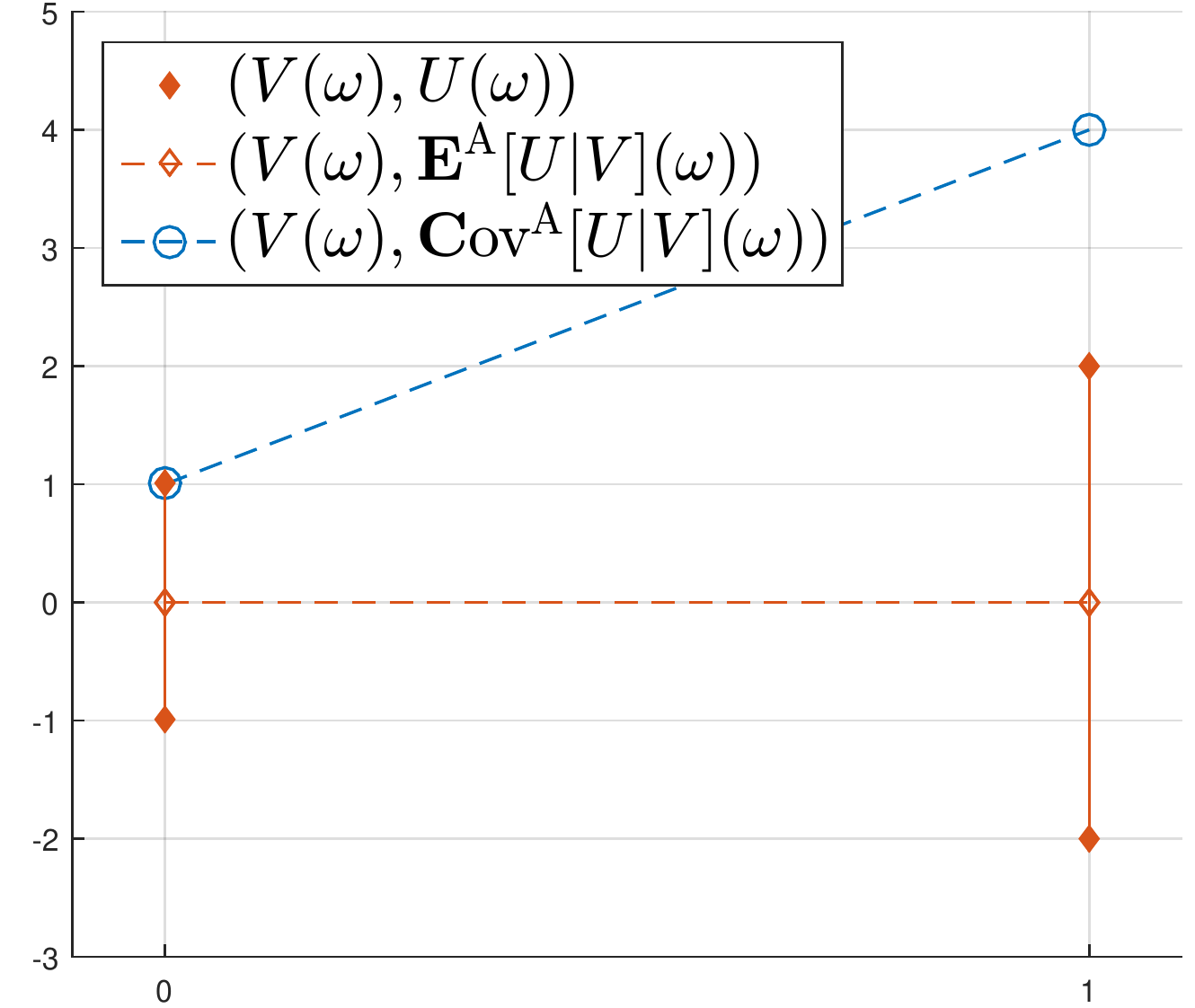}
		\caption{In \Cref{example:AverageConditionalCovariance}, the conditional expectation $\bE[U|V]$ as well as the conditional covariance $\Cov[U|V]$ happen to be affine, which is why they coincide with the LCE $\bE^{\ABLin}[U|V]$ and the LCC $\Cov^{\ABLin}[U|V]$, respectively.
		The ALCC $\Cov^{\ABLin}_{V}[U] = \nicefrac{5}{2}$ captures the expected value of $\Cov^{\ABLin}[U|V]$.}
		\label{fig:VisualizationConditionalCovariance}
	\end{figure}
\end{example}

The main aim of this section is to investigate basic properties of, and provide explicit formulae for, the LCE and the LCC.

\subsection{Basic Properties of the LCE}
\label{section:BasicPropertiesLCE}

This section highlights, in \Cref{theorem:BasicPropertiesLCM,counterexample:CounterexamplesPropertiesLCM} respectively, the ways in which the LCE shares and lacks the key properties of the exact conditional expectation.
We call attention to the non-trivial conditions that appear to be necessary for the LCE version of the dominated convergence theorem;
see \Cref{theorem:BasicPropertiesLCM}\ref{item:DominatedConvergenceLCMmodified}, 	\Cref{remark:DCT}, and \Cref{counterexample:CounterexamplesPropertiesLCM}\ref{item:DominatedConvergenceLCMconventional}.
As mentioned above, all statements concerning $\bE^{\ABLin}[U|V]$ and $\Cov^{\ABLin}[U|V]$ only hold $\bP$-a.s.


\begin{lemma}
	\label{lemma:CovarianceFormulaForLCM}
	With the notation of \Cref{section:GeneralSetupAndNotation}, let $W\in \overline{\ABLin_{\cF} \circ V}$.
	Then, a.s.,
	\[
		\bE\bigl[ R^{\ABLin}[U|V] \bigr] = 0,
		\qquad
		\Cov\bigl[ R^{\ABLin}[U|V] , W \bigr] = 0.
	\]
	In particular,
	\[
		\Cov\bigl[ \bE^{\ABLin}[U|V] , V \bigr] = \Cov[U,V] \text{ a.s.}
	\]
\end{lemma}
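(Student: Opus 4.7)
The heart of the matter is that $\bE^{\ABLin}[U|V]$ is defined as the orthogonal projection of $U$ onto the closed subspace $\overline{\ABLin_{\cG}\circ V}$ of $L^{2}(\bP;\cG)$, so the residual $R^{\ABLin}[U|V]$ is orthogonal (in the $L^{2}(\bP;\cG)$ inner product) to every element of that subspace. The plan is to extract the two claimed identities by testing this orthogonality against carefully chosen elements of $\overline{\ABLin_{\cG}\circ V}$, and then to deduce the ``in particular'' statement via bilinearity of the covariance.

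\textbf{First identity.} For every $g\in\cG$ the constant map $h\mapsto g$ lies in $\ABLin(\cH;\cG)$, so $g\in \ABLin(\cH;\cG)\circ V \subseteq \overline{\ABLin_{\cG}\circ V}$. The orthogonality of $R^{\ABLin}[U|V]$ to this element reads
\[
	0 = \bE\bigl[\innerprod{R^{\ABLin}[U|V]}{g}_{\cG}\bigr] = \innerprod{\bE\bigl[R^{\ABLin}[U|V]\bigr]}{g}_{\cG},
\]
where the second equality uses the defining property of the Bochner integral. Since $g$ is arbitrary, $\bE[R^{\ABLin}[U|V]] = 0$.

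\textbf{Second identity.} Using the first identity, $\Cov[R^{\ABLin}[U|V],W]$ simplifies to $\bE[R^{\ABLin}[U|V]\otimes W]$, so it suffices to show that this rank-one-valued expectation vanishes weakly, i.e.\ that
\[
	\bE\bigl[\innerprod{W}{f}_{\cF}\,\innerprod{R^{\ABLin}[U|V]}{g}_{\cG}\bigr] = 0 \quad \text{for all $f\in\cF$, $g\in\cG$.}
\]
The key observation is that the map $T_{f,g}\colon \cF\to\cG$, $y\mapsto \innerprod{y}{f}_{\cF}\,g$, is bounded and linear. Therefore, writing $W$ as the $L^{2}$-limit of $W_{n} = \gamma_{n}\circ V$ with $\gamma_{n}\in\ABLin(\cH;\cF)$, the composition $T_{f,g}\circ\gamma_{n}$ lies in $\ABLin(\cH;\cG)$, and a routine norm estimate (using $\norm{T_{f,g}}\le\norm{f}_{\cF}\norm{g}_{\cG}$) shows that $T_{f,g}(W_{n})=(T_{f,g}\circ\gamma_{n})\circ V \to T_{f,g}(W) = \innerprod{W}{f}_{\cF}\,g$ in $L^{2}(\bP;\cG)$. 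Hence $\innerprod{W}{f}_{\cF}\,g \in \overline{\ABLin_{\cG}\circ V}$, and the displayed expectation vanishes by orthogonality of $R^{\ABLin}[U|V]$ to this element.

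\textbf{The ``in particular'' statement.} Take $\cF=\cH$; then $V$ itself corresponds to the identity operator, so $V\in \ABLin(\cH;\cH)\circ V\subseteq\overline{\ABLin_{\cH}\circ V}$. By bilinearity of the (cross-)covariance,
\[
	\Cov[U,V] = \Cov\bigl[\bE^{\ABLin}[U|V],V\bigr] + \Cov\bigl[R^{\ABLin}[U|V],V\bigr],
\]
and the second term vanishes by what was just proved. The main (minor) obstacle is the approximation argument in the second step, i.e.\ verifying that tensoring $W$ with a fixed direction in $\cG$ preserves membership in the closure $\overline{\ABLin_{\cG}\circ V}$; everything else is a direct consequence of orthogonal projection.
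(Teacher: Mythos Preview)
Your proof is correct, and for the second identity it takes a genuinely different (and in some ways more elementary) route than the paper.

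For the first identity both you and the paper argue via orthogonality to constants, though the paper phrases it as ``$\bE$ and $\bE^{\ABLin}[\,\cdot\,|V]$ are both orthogonal projections, hence $\bE\bigl[\bE^{\ABLin}[U|V]\bigr]=\bE[U]$''.

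For the second identity the paper proceeds in two stages: it first shows $\Cov\bigl[R^{\ABLin}[U|V],V\bigr]=0$ by noting that orthogonality of $R^{\ABLin}[U|V]$ to $\gamma\circ V$ for every $\gamma\in\BLin(\cH;\cG)$ gives $\trace\bigl(\Cov[R^{\ABLin}[U|V],V]\,\gamma^{\ast}\bigr)=0$, which forces the (trace-class) covariance operator to vanish (\Cref{lemma:L2InnerProductTraceCovariance,lemma:TraceOfProductImpliesZero}); it then invokes \Cref{prop:CharacterizeClosureOfBcircV} to write a general $W\in\overline{\ABLin_{\cF}\circ V}$ as $\gamma\circ V$ for some possibly unbounded $\gamma\in\ALin_{V}(\cH;\cF)$, and factors the covariance as $\Cov[R^{\ABLin}[U|V],W]=(\overline{\gamma}\,\Cov[V,R^{\ABLin}[U|V]])^{\ast}=0$. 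Your argument bypasses both the trace machinery and the representation of the closure: you test the covariance weakly and observe that the rank-one map $T_{f,g}$ pushes $\overline{\ABLin_{\cF}\circ V}$ into $\overline{\ABLin_{\cG}\circ V}$, so orthogonality applies directly. This is a cleaner and more self-contained argument; the paper's approach, on the other hand, establishes the operator identity $\Cov[R^{\ABLin}[U|V],V]=0$ first and then reuses it, which fits better with how the result is invoked elsewhere in the paper.
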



\proofinappendix


By way of comparison with the exact conditional expectation, some basic properties of the LCE are summarised by the following theorem (a martingale property together with a martingale convergence theorem are postponed to \Cref{thm:MartingaleProperty}).


\begin{theorem}[Basic properties satisfied by the LCE and the LCC]
	\label{theorem:BasicPropertiesLCM}
	With the notation of \Cref{section:GeneralSetupAndNotation}, let $U'$ and $U_{k}\in L^2(\Omega,\Sigma,\bP;\cG)$ for $k\in\bN$.
	Further, let $\varphi\in \ABLin(\cH;\cF)$.
	The LCE fulfils the following basic properties $\bP$-a.s.:
	\begin{enumerate}[label = (\alph*)]
		\item
		\label{item:StabilityLCM}
		\textbf{stability:}
		\\[1ex]
		$\bE^{\ABLin}[U|V]
		=
		\bE[U|V]$,
		if $\bE[U|V] \in \overline{\ABLin_{\cG}\circ V}$, in particular,
		\\[1ex]
		$\bE^{\ABLin}[U|V] = g$ a.s.\ whenever $U=g\in\cG$ a.s.,
		$\bE^{\ABLin}[V|V] = V$ and
		$\bE^{\ABLin}[\varphi \circ V|V] = \varphi \circ V$;

		\item
		\label{item:LinearityLCM}
		\textbf{linearity:}
		\\[1ex]
		$\bE^{\ABLin}[aU+bU'|V]
		=
		a\bE^{\ABLin}[U|V] + b\bE^{\ABLin}[U'|V]$ for any $a,b\in\bR$ and
		\\[1ex]
		$\bE^{\ABLin}[\psi(U)|V] =\psi\bigl( \bE^{\ABLin}[U|V] \bigr)$ for any $\psi\in \ABLin(\cG;\cF)$;

		\item
		\label{item:SelfAdjointnessLCM}
		\textbf{self-adjointness:}
		\\[1ex]
		$\bE\bigl[ \innerprod{U'}{\bE^{\ABLin}[U|V]}_{\cG} \bigr]
		=
		\bE\bigl[ \innerprod{\bE^{\ABLin}[U'|V]}{\bE^{\ABLin}[U|V]}_{\cG} \bigr]
		=
		\bE\bigl[ \innerprod{\bE^{\ABLin}[U'|V]}{U}_{\cG} \bigr]$;

		\item
		\label{item:LawOfTotalExpectationLCM}
		\textbf{law of total linear expectation:}
		\\[1ex]
		$\bE\bigl[ \bE^{\ABLin}[U|V] \bigr]
		=
		\bE[U]$;

		\item
		\label{item:ConditionalExpectationLCM}
		\textbf{compatibility with conditional expectation:}
		\\[1ex]
		$\bE\bigl[ \bE^{\ABLin}[U|V] \, \big|\, W\bigr]
		=
		\bE^{\ABLin}\bigl[ \bE[U|W] \, \big|\, V\bigr]$;
		\\[1ex]
		$\bE\bigl[ \bE^{\ABLin}[U|V] \, \big|\, V\bigr]
		=
		\bE^{\ABLin}\bigl[ \bE[U|V] \, \big|\, V\bigr]
		=
		\bE^{\ABLin}[U|V]$;

		\item
		\label{item:TowerPropertyLCM}
		\textbf{tower properties:}
		\\[1ex]
		$\bE^{\ABLin}\bigl[ \bE[U|V] \, \big|\, W\bigr]
		=
		\bE^{\ABLin}[U|W]$ if $\sigma(W)\subseteq \sigma(V)$;
		\\[1ex]
		$\bE^{\ABLin}\bigl[ U \, \big|\, \bE^{\ABLin}[U|V] \bigr]
		=
		\bE^{\ABLin}[U|V]$;
		\\[1ex]
		$\bE^{\ABLin}\bigl[ \bE^{\ABLin}[U|V] \, \big|\, \varphi \circ V\bigr]
		=
		\bE^{\ABLin}\bigl[ \bE[U|V] \, \big|\, \varphi \circ V\bigr]
		=
		\bE^{\ABLin}[U| \varphi \circ V]$, in particular,
		\\[1ex]
		$\bE^{\ABLin}\bigl[ \bE^{\ABLin}[U| (V,W)] \, \big|\, V\bigr]
		=
		\bE^{\ABLin}\bigl[ \bE[U|(V,W)] \, \big|\, V\bigr]
		=
		\bE^{\ABLin}[U|V]$;

		\item
		\label{item:LawOfTotalCovarianceLCM}
		\textbf{law of total linear covariance:}
		\\[1ex]
		$\Cov[U,W]
		=
		\Cov\bigl[ \bE^{\ABLin}[U|V] , \bE^{\ABLin}[W|V] \bigr]
		+
		\bE\bigl[ \Cov^{\ABLin}[U,W|V] \bigr]$, in particular,
		\\[1ex]
		$\Cov[U] \geq \Cov\bigl[\bE[U|V]\bigr] \geq \Cov\bigl[\bE^{\ABLin}[U|V]\bigr] \ge 0$.

		\item
		\label{item:PullingOutIndependentLCM}
		\textbf{pulling out independent factors:}
		\\[1ex]
		$\bE^{\ABLin}[W\otimes U|V]
		=
		\bE[W]\otimes \bE^{\ABLin}[U|V]$,
		if $W$ is independent of $(U,V)$, in particular,
		\\[1ex]
		$\bE^{\ABLin}[W|V]
		=
		\bE[W]$,
		if $V$ and $W$ are independent (this also follows from 		\ref{item:StabilityLCM});

		\item
		\label{item:DominatedConvergenceLCMmodified}
		\textbf{$L^2$-dominated convergence theorem (DCT):}
		\\[1ex]
		$\norm{ \bE^{\ABLin}[U_{k}|V] - \bE^{\ABLin}[U|V]}_{\cG}\xrightarrow[k\to\infty]{\text{a.s.}}0$ if $C_{V}$ has finite rank and either of the following conditions holds:
		\begin{enumerate}[label=(\greek*)]
			\item \label{item:DCTwithSquareIntegrableDominatingFunction} $\norm{U_{k} - U}_{\cG}\xrightarrow[k\to\infty]{\text{a.s.}}0$ and $\norm{U_{k}}_{\cG} \le Y$ for all $k\in\bN$ and some $Y\in L^{2}(\bP)$,
			\item \label{item:DCTunderL2Convergence} $\norm{U_{k} - U}_{L^{2}(\bP;\cG)}\xrightarrow[k\to\infty]{}0$.
		\end{enumerate}
	\end{enumerate}
\end{theorem}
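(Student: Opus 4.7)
The whole theorem rides on two observations. First, $\bE^{\ABLin}[U|V] = P_{\overline{\ABLin_\cG\circ V}} U$ is the orthogonal projection of $U$ onto a closed subspace of $L^2(\bP;\cG)$, so the abstract consequences of being an orthogonal projection are all available. Second, Lemma~\ref{lemma:CovarianceFormulaForLCM} encodes the defining orthogonality in an extremely usable form: the residual $R^{\ABLin}[U|V]$ is mean-zero and uncorrelated with every element of $\overline{\ABLin_\cF\circ V}$. The plan is to reduce each individual claim to these two facts together with a suitable inclusion between the closed subspaces in play.

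I would dispatch \ref{item:StabilityLCM}--\ref{item:SelfAdjointnessLCM} and \ref{item:LawOfTotalExpectationLCM} essentially directly: \ref{item:StabilityLCM} uses $\overline{\ABLin_\cG\circ V}\subseteq L^2(\Omega,\sigma(V);\cG)$, so any $\bE[U|V]$ already in this smaller affine closure is fixed by the projection; the linear-in-$U$ half of \ref{item:LinearityLCM} is linearity of projection, while the affine-$\psi$ half uses that a bounded affine map sends Cauchy sequences in $\overline{\ABLin_\cG\circ V}$ to Cauchy sequences in $\overline{\ABLin_\cF\circ V}$, with the required orthogonality then following by pushing the test through $\psi^\ast$; \ref{item:SelfAdjointnessLCM} is the standard self-adjointness $\innerprod{U'}{PU}=\innerprod{PU'}{U}$ of an orthogonal projection; and \ref{item:LawOfTotalExpectationLCM} is verbatim the first assertion of Lemma~\ref{lemma:CovarianceFormulaForLCM}. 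The tower-type identities \ref{item:ConditionalExpectationLCM} and \ref{item:TowerPropertyLCM} then follow from two subspace inclusions: $\overline{\ABLin_\cG\circ(\varphi\circ V)}\subseteq\overline{\ABLin_\cG\circ V}$ (since $\psi\circ\varphi$ is bounded affine whenever $\psi$ and $\varphi$ are) and, whenever $\sigma(W)\subseteq\sigma(V)$, $\overline{\ABLin_\cG\circ W}\subseteq L^2(\Omega,\sigma(V);\cG)$; the various equalities become the bookkeeping that a projection onto a smaller closed subspace coincides with successive projections. For \ref{item:LawOfTotalCovarianceLCM}, I would split $U=\bE^{\ABLin}[U|V]+R^{\ABLin}[U|V]$ and likewise for $W$; expanding $\Cov[U,W]$ kills both cross terms by Lemma~\ref{lemma:CovarianceFormulaForLCM} (using the trick of pairing with test functions $\omega\mapsto\innerprod{\bE^{\ABLin}[U|V](\omega)}{g'}g\in\overline{\ABLin_\cG\circ V}$ to upgrade scalar orthogonality to the operator identity), and the residual covariance rewrites as $\bE[\Cov^{\ABLin}[U,W|V]]$ once \ref{item:LawOfTotalExpectationLCM} is applied to the $\HS(\cH;\cG)$-valued random variable $R^{\ABLin}[U|V]\otimes R^{\ABLin}[W|V]$. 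The inequality chain then comes from the classical law of total covariance combined with the same Pythagoras identity for the secondary projection $\bE^{\ABLin}[U|V]=P_{\overline{\ABLin_\cG\circ V}}\bE[U|V]$.

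The two delicate items are \ref{item:PullingOutIndependentLCM} and \ref{item:DominatedConvergenceLCMmodified}. For \ref{item:PullingOutIndependentLCM} I would phrase the claim in $L^2(\bP;\HS(\cG;\cF))$: using $\innerprod{a\otimes b}{A}_\HS=\innerprod{a}{Ab}_\cG$, independence of $W$ from $(U,V)$ collapses $\bE\innerprod{W\otimes U}{\psi(V)}_\HS$ against any test $\psi\in\ABLin(\cH;\HS(\cG;\cF))$ to $\bE\innerprod{U}{\psi(V)^\ast\bE[W]}_\cG$; since $v\mapsto\psi(v)^\ast\bE[W]$ is bounded affine into $\cG$, the residual $R^{\ABLin}[U|V]$ annihilates it, yielding $\bE^{\ABLin}[W\otimes U|V]=\bE[W]\otimes\bE^{\ABLin}[U|V]$ once one also verifies (by approximating $\bE^{\ABLin}[U|V]$ from $\overline{\ABLin_\cG\circ V}$ and using $\norm{\bE[W]\otimes(\gamma_n V-\bE^{\ABLin}[U|V])}_\HS\le\norm{\bE[W]}\norm{\gamma_n V-\bE^{\ABLin}[U|V]}_\cG$) that the right-hand side actually lies in the operator-valued affine closure. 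For \ref{item:DominatedConvergenceLCMmodified}, the finite-rank hypothesis on $C_V$ forces $V-\mu_V$ a.s.\ into the finite-dimensional subspace $\overline{\ran C_V}$, so $C_V^\dagger$ is bounded there and the explicit formula $\gamma_{U|V}^{\ABLin}(v)=\mu_U+C_{UV}C_V^\dagger(v-\mu_V)$ (to be proved as Theorem~\ref{theorem:LinearConditionalMeanUnderRangeAssumption}) applies; under either $(\alpha)$ or $(\beta)$ one obtains $\mu_{U_k}\to\mu_U$ in $\cG$ and $C_{U_k V}\to C_{UV}$ in operator norm, from which $\gamma_{U_k|V}^{\ABLin}(V)\to\gamma_{U|V}^{\ABLin}(V)$ a.s.\ follows. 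The main obstacle I anticipate is setting up \ref{item:PullingOutIndependentLCM} cleanly: identifying the correct Hilbert target, the correct class of operator-valued tests, and certifying that $\bE[W]\otimes\bE^{\ABLin}[U|V]$ genuinely sits in the relevant affine closure.
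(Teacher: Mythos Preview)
Your proposal is correct and tracks the paper's argument almost point for point; the reduction of \ref{item:StabilityLCM}--\ref{item:TowerPropertyLCM} to projection properties and subspace inclusions, and the splitting argument for \ref{item:LawOfTotalCovarianceLCM}, are exactly what the paper does. The only substantive differences are in \ref{item:PullingOutIndependentLCM} and \ref{item:DominatedConvergenceLCMmodified}. For \ref{item:PullingOutIndependentLCM}, the paper computes $\Cov[W\otimes U,V]=\mu_W\otimes C_{UV}$ directly from independence and then invokes Lemma~\ref{lemma:L2InnerProductTraceCovariance} to obtain the orthogonality, whereas you test against operator-valued affine maps via the $\HS$ inner product and reduce to the $\cG$-valued orthogonality of $R^{\ABLin}[U|V]$; these are equivalent reformulations of the same computation. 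For \ref{item:DominatedConvergenceLCMmodified}, you invoke the explicit formula from Theorem~\ref{theorem:LinearConditionalMeanUnderRangeAssumption}, which the paper itself acknowledges in a footnote as the simpler route; its main-text proof instead derives $\overline{\gamma}_{U_k|V}^{\ABLin}\,C_V = C_{U_kV}\to 0$ from Lemma~\ref{lemma:CovarianceFormulaForLCM} and uses the finite-rank hypothesis (so that $V\in\ran C_V$ a.s.) to conclude $\overline{\gamma}_{U_k|V}^{\ABLin}\circ V\to 0$ without ever writing $C_V^\dagger$. Your approach is cleaner but introduces a forward reference; the paper's avoids this at the cost of a slightly longer computation.
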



\proofinappendix


\begin{remark}[Sufficient conditions for the DCT]
	\label{remark:DCT}
	Note that in \Cref{theorem:BasicPropertiesLCM}\ref{item:DominatedConvergenceLCMmodified}\ref{item:DCTwithSquareIntegrableDominatingFunction} the dominating random variable $Y\in L^{2}(\bP)$ is assumed to be square-integrable, which is slightly stronger than the conventional assumption $Y\in L^{1}(\bP)$ (a counterexample to the sufficiency of the latter assumption is provided in \Cref{counterexample:CounterexamplesPropertiesLCM}\ref{item:DominatedConvergenceLCMconventional}).
	On the other hand, \ref{item:DCTunderL2Convergence} is a particularly weak assumption, too weak for analogous statements on the (regular) conditional expectation $\bE[\quark | V]$ in place of $\bE^{\ABLin}[\quark | V]$.

	So far, we could only prove the DCT under the condition that $C_{V}$ has finite rank.
	Note that counterexamples can easily be constructed (see below) if one only assumes \ref{item:DCTunderL2Convergence}.
	However, the validity of the DCT under \ref{item:DCTwithSquareIntegrableDominatingFunction} for $C_{V}$ of infinite rank remains an open problem.
	One obstacle here is the missing monotonicity of the LCE, see \Cref{counterexample:CounterexamplesPropertiesLCM}\ref{item:MonotonicityLCM}: $\norm{U_{k}}_{\cG} \le Y$ a.s.\ does not imply that $\norm{\bE^{\ABLin}[U_{k}|V]}_{\cG} \le Y$ a.s.

	For a counterexample to the DCT under assumption \ref{item:DCTunderL2Convergence} (without the finite-rank assumption) consider a centered Gaussian random variable $V$ on $\cH = \ell^{2}$ with Karhunen--Lo\`eve expansion $V = \sum_{n} \sigma_{n}Z_{n}e_{n}$, where $\sigma_{n} > 0$ for all $n\in\bN$, $\sum_{n} \sigma_{n}^{2} < \infty$, $Z_{n}\stackrel{\text{i.i.d.}}{\sim}\cN(0,1)$ and $(e_{n})_{n\in\bN}$ is the canonical basis of $\cH = \ell^{2}$.
	Choose $\cG = \bR$ and $U_{k} = \delta_{k} Z_{k}$ where $\delta_{k} \searrow 0$ such that $\bP[A_{k}] = 1/k$ for $A_{k} = [\delta_{k}Z_{k} \geq \varepsilon]$ and $\varepsilon = 1$.
	Then $\bE^{\ABLin} [U_{k}|V]=U_{k}$ by definition of the LCE and, since the family of events $(A_{k})_{k\in \bN}$ is independent and $\sum_{k} \bP[A_{k}] = \infty$, the second Borel--Cantelli lemma implies $\bP[A_{k} \text{ i.o.}] = 1$.
	Hence, by \citet[Theorem~4.1.6]{bremaud2017discrete}, $\bE^{\ABLin} [U_{k}|V]=U_{k}$ does not converge to zero a.s., while \ref{item:DCTunderL2Convergence} is satisfied since $\delta_{k} \to 0$.
\end{remark}

It is also worth mentioning that the LCE does \emph{not} fulfil several important properties of conditional expectations.
These are summarised by the following statements and the actual counterexamples are provided in the proof in \Cref{section:Proofs}.
Note in particular that there are scalar-valued counterexamples in each case, and so these deficiencies of the LCE are not merely a consequence of the Hilbert space context.

\begin{theorem}[Basic properties \textcolor{red}{\underline{\textbf{not}}} satisfied by the LCE and the LCC]
	\label{counterexample:CounterexamplesPropertiesLCM}
	For each of the following desired properties of the LCE, there is an explicit counterexample using $\bR$-valued random variables $U$, $U'$, $V$, etc.\ satisfying the assumptions of \Cref{theorem:BasicPropertiesLCM}. As usual, all statements have to be understood $\bP$-a.s.
	\begin{enumerate}[label = (\alph*)]
		\item
		\label{item:MonotonicityLCM}
		\textbf{monotonicity:} \textsuperscript{\textcolor{red}{\textbf{(invalid)}}}
		\\[1ex]
		$U \geq U'$ implies $\bE^{\ABLin}[U|V] \geq \bE^{\ABLin}[U'|V]$ in the case $\cG = \bR$;

		\item
		\label{item:TriangleInequalityLCM}
		\textbf{triangle inequality:} \textsuperscript{\textcolor{red}{\textbf{(invalid)}}}
		\\[1ex]
		$\bignorm{ \bE^{\ABLin}[U|V] }_{\cG}
		\leq
		\bE^{\ABLin}[\norm{U}_{\cG}|V]$;

		\item
		\label{item:JensensInequalityLCM}
		\textbf{Jensen's inequality:} \textsuperscript{\textcolor{red}{\textbf{(invalid)}}}
		\\[1ex]
		$f\bigl( \bE^{\ABLin}[U|V] \bigr)
		\leq
		\bE^{\ABLin}[f(U)|V]$ for any convex\footnote{Note that Jensen's (in-)equality holds for affine functions $f\in \ABLin(\cG;\cF)$, cf.\ \Cref{theorem:BasicPropertiesLCM}\ref{item:LinearityLCM}.} function $f\colon \cG\to\bR$;

		\item
		\label{item:PullingOutKnownLCM}
		\textbf{pulling out known factors:} \textsuperscript{\textcolor{red}{\textbf{(invalid)}}}
		\\[1ex]
		$\bE^{\ABLin}[f(V) \, U|V]
		=
		f(V) \, \bE^{\ABLin}[U|V]$
		for measurable\footnote{Our counterexample shows that property \ref{item:PullingOutKnownLCM} is invalid even if ``measurable'' is strengthened to ``linear''.} maps $f\colon \cH\to\bR$;

		\item
		\label{item:WrongTowerPropertyLCM}
		\textbf{(yet another) tower property:} \textsuperscript{\textcolor{red}{\textbf{(invalid)}}}
		\\[1ex]
		$\bE^{\ABLin}\bigl[ \bE^{\ABLin}[U|V] \, \big|\, W\bigr]
		=
		\bE^{\ABLin}[U|W]$ if $\sigma(W)\subseteq \sigma(V)$;		

		\item
		\label{item:FatouLCM}
		\textbf{Fatou's lemma:} \textsuperscript{\textcolor{red}{\textbf{(invalid)}}}
		\\[1ex]
		Let $\cG = \bR$ and $\bE^{\ABLin}[\inf_{k \in \bN} U_k | V]< \infty$ (alternatively, $\bE[\inf_{k \in \bN} U_k | V]< \infty$). Then
		\\[1ex]
		$\bE^{\ABLin}\bigl[\liminf_{k\to\infty} U_k \big| V \bigr]
		\leq
		\liminf_{k\to\infty} \bE^{\ABLin}[U_k | V]$;

		\item
		\label{item:DominatedConvergenceLCMconventional}
		\textbf{$L^1$-dominated convergence theorem:} \textsuperscript{\textcolor{red}{\textbf{(invalid)}}}
		\\[1ex]
		If $\norm{U_{k} - U}_{\cG}\xrightarrow[k\to\infty]{\text{a.s.}}0$ and $\norm{U_{k}}_{\cG} \le Y$ for all $k\in\bN$ and some $Y\in L^{1}(\bP)$, then
		\newline
		$\norm{ \bE^{\ABLin}[U_{k}|V] - \bE^{\ABLin}[U|V]}_{\cG}\xrightarrow[k\to\infty]{\text{a.s.}}0$.
		
		\item
		\label{item:ContractivenessLCM}
		\textbf{$L^p$-contractivity for $p \neq 2$:} \textsuperscript{\textcolor{red}{\textbf{(invalid)}}} 
		\\[1ex]
		$\bE^{\ABLin}[\quark | V]$ is a contractive projection of $L^{p}(\bP;\cG)$ spaces for some $1\leq p \neq 2$, i.e.
		\\[1ex]
		$\bE \bigl[ \norm{\bE^{\ABLin}[U|V]}_{\cG}^{p} \bigr]
		\leq
		\bE \bigl[ \norm{U}_{\cG}^{p} \bigr]$ for any $U\in L^{p}(\Omega,\Sigma,\bP ; \cG)$.%

		\item
		\label{item:NonNegativityLCC}
		\textbf{non-negativity of the LCC:} \textsuperscript{\textcolor{red}{\textbf{(invalid)}}}
		\\[1ex]
		The LCC $\Cov^{\ABLin}[U|V]$ is non-negative, $\Cov^{\ABLin}[U|V] \geq 0$.
	\end{enumerate}
\end{theorem}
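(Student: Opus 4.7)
The plan is to construct explicit counterexamples for each item using simple (mostly real-valued, mostly discrete) random variables, exploiting the fact that the LCE is only an affine projection and therefore fails to preserve positivity, order, and multiplicative structure even in the scalar case. A single \emph{workhorse} example will handle the majority of the items: take $V$ uniform on $\{0, 1, 2\}$ and $U = V^{2}$; substituting $\mu_{V} = 1$, $\mu_{U} = 5/3$, $C_{V} = 2/3$ and $C_{UV} = 4/3$ into \eqref{equ:CommonLCMformula} yields $\gamma_{U|V}^{\ABLin}(v) = 2v - 1/3$, so $\bE^{\ABLin}[U|V = 0] = -1/3$ even though $U \geq 0$ almost surely.

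From this single computation I would derive counterexamples to items \ref{item:MonotonicityLCM}--\ref{item:JensensInequalityLCM}, \ref{item:PullingOutKnownLCM}, and \ref{item:ContractivenessLCM} (at $p = 1$). Monotonicity fails against $U' \equiv 0$; the triangle inequality and Jensen's inequality (with $f = \abs{\cdot}$) fail because $\abs{\bE^{\ABLin}[U|V = 0]} = 1/3$ while $\bE^{\ABLin}[\abs{U}|V = 0] = \bE^{\ABLin}[U|V = 0] = -1/3$; pulling out the known linear factor $f(V) = V$ fails because an analogous LCE computation applied to $V \cdot U = V^{3}$ gives $-1$ at $v = 0$, whereas $V \cdot \bE^{\ABLin}[U|V = 0] = 0$; and $L^{1}$-contractivity fails because $\bE[\abs{\bE^{\ABLin}[U|V]}] = 17/9 > 5/3 = \bE[\abs{U}]$. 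Item \ref{item:FatouLCM} follows by alternating $U_{2k} = 0$ and $U_{2k+1} = U$ from the workhorse: the integrability assumption $\bE^{\ABLin}[\inf_{k} U_{k}|V] = 0 < \infty$ is trivial, $\liminf_{k} U_{k} = 0$ gives a zero LCE on the left, while $\liminf_{k} \bE^{\ABLin}[U_{k}|V = 0] = -1/3$ on the right.

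For item \ref{item:WrongTowerPropertyLCM} I would choose $V$ uniform on $\{1, 2, 3\}$, $U = (V - 2)^{2}$, and $W = \mathds{1}_{\{V = 2\}}$. Then $\sigma(W) \subsetneq \sigma(V)$ and $C_{UV} = 0$, so $\bE^{\ABLin}[U|V]$ collapses to the constant $\mu_{U} = 2/3$, whence the left-hand side of the claimed identity equals $2/3$; a direct computation of $\bE^{\ABLin}[U|W]$ gives $1 - W$, which disagrees on both atoms of $W$. For item \ref{item:NonNegativityLCC}, the LCC is, by definition, the LCE of the non-negative random variable $R^{\ABLin}[U|V]^{2}$, and so the failure of \ref{item:MonotonicityLCM} transfers to it; a suitably asymmetric three-atom $V$ together with centered $U$ supplies the required example, concretely $V \in \{0, 1, 10\}$ with probabilities $(1/10, 8/10, 1/10)$ and $U$ taking the values $(36, -5, 4)$, for which $\mu_{U} = C_{UV} = 0$, hence $R^{\ABLin}[U|V] = U$, and the affine LCE of $U^{2}$ evaluates to $-2204/21 < 0$ at $v = 10$.

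The main obstacle is item \ref{item:DominatedConvergenceLCMconventional}, which alone requires an infinite-dimensional construction. The key insight will be that $L^{1}$-dominance of $U_{k}$ by some $Y \in L^{1}(\bP)$ is too weak to control the cross-covariance $\bE[U_{k} V]$ via dominated convergence when $YV \notin L^{1}(\bP)$. The plan is to engineer $Y$ and $V$ on $\Omega = (0, 1]$ with Lebesgue measure so that $Y \in L^{1} \setminus L^{2}$ and $V \in L^{2}$ but $YV \notin L^{1}$; a natural choice is $V(\omega) = \omega^{-0.4}$ and $Y(\omega) = \omega^{-0.7}$, for which $YV = \omega^{-1.1}$. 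Setting $U_{k} = Y \cdot \mathds{1}_{(k^{-(1 + \delta)}, 1/k]}$ for some small $\delta > 0$ then gives $U_{k} \to 0$ pointwise, $\abs{U_{k}} \leq Y$ and $U_{k} \in L^{2}$ for each $k$; elementary integration yields $\bE[U_{k} V] \to \infty$, so by \eqref{equ:CommonLCMformula} the LCE $\bE^{\ABLin}[U_{k}|V](\omega)$ diverges on the positive-measure set $\{V \neq \mu_{V}\}$, refuting almost-sure convergence.
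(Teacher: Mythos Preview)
Your constructions for items \ref{item:MonotonicityLCM}--\ref{item:DominatedConvergenceLCMconventional} and \ref{item:NonNegativityLCC} are all correct and, while the specific examples differ from the paper's (the paper uses a symmetric three-point $V\in\{-1,0,1\}$ with $U=|V|$ as its workhorse, and a somewhat more elaborate antisymmetric construction on $[-1,1]$ for \ref{item:DominatedConvergenceLCMconventional}), the underlying idea is the same: the LCE is only an affine projection and hence cannot respect pointwise order. Your treatment of \ref{item:DominatedConvergenceLCMconventional} is in fact a bit cleaner than the paper's, since you isolate the mechanism ($YV\notin L^{1}$ forcing $C_{U_{k}V}\to\infty$) more transparently.

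There is, however, a genuine gap in \ref{item:ContractivenessLCM}. The property to be disproved is ``$\bE^{\ABLin}[\,\cdot\,|V]$ is $L^{p}$-contractive for \emph{some} $1\le p\neq 2$'', so its negation requires you to exhibit, for \emph{every} $p\neq 2$, a pair $(U,V)$ on which contractivity fails. You only treat $p=1$. Your workhorse does not extend: with $U=V^{2}$ and $V$ uniform on $\{0,1,2\}$ one has $\bE\bigl[|\bE^{\ABLin}[U|V]|^{p}\bigr]\sim\tfrac13(11/3)^{p}$ and $\bE[|U|^{p}]\sim\tfrac13\,4^{p}$ for large $p$, so the inequality holds and no counterexample arises for $p>2$. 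The paper handles this by introducing a free parameter $\varepsilon\in(0,1)$ into a four-point example and showing via a Taylor/Bernoulli argument that for each fixed $p\neq 2$ one can choose $\varepsilon$ small enough to violate contractivity; something of this flavour---a family of examples indexed by a parameter that can be tuned to the given $p$---is needed to close the argument.
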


Note that, in contrast to \Cref{counterexample:CounterexamplesPropertiesLCM}\ref{item:ContractivenessLCM}, $\bE^{\ABLin}[\quark | V]$ \emph{is} a contractive projection on $L^{2}(\bP;\cG)$;
this follows directly from the definition of the LCE as an $L^{2}(\bP;\cG)$-orthogonal projection.

\proofinappendix

\subsection{Explicit Formula for the LCE: Compatible Case}

We are first going assume that $\ran C_{VU}\subseteq \ran C_{V}$, which, following \citet{corach2000oblique} and \citet{owhadi2015conditioning}, we call the \emph{compatible case}.
In this case, the orthogonal projection $\bE^{\ABLin}[U|V]$ of $U$ onto $\overline{\ABLin_{\cG}\circ V}$ turns out to lie in $\ABLin(\cH;\cG)\circ V$, which is generally not closed in $L^2(\Omega,\Sigma,\bP;\cG)$.
The following theorem provides an explicit formula for the (affine) conditional mean and generalises \citet[Lemma~4.1]{ernst2015analysis}.


\begin{theorem}[Formula for the LCE: compatible case]
	\label{theorem:LinearConditionalMeanUnderRangeAssumption}
	With the notation of \Cref{section:GeneralSetupAndNotation}, if the range inclusion $\ran C_{VU}\subseteq \ran C_{V}$ holds, then the operator $C_V^{\dagger} C_{VU}\colon \cG\to\cH$ is bounded and the operator $\gamma_{U|V}^{\ABLin} \in \ABLin(\cH;\cG)$ defined by
	\begin{align*}
		\gamma_{U|V}^{\ABLin}(v)
		&\defeq
		\mu_U + (C_{V}^\dagger C_{VU})^{\ast} (v-\mu_V)
	\end{align*}
	minimizes the functional $\cE_{U|V}$ given by \eqref{equ:FunctionalConditionalExpectation}
	within $\ABLin(\cH;\cG)$.
	In particular, $\bE^{\ABLin}[U|V] =	\gamma_{U|V}^{\ABLin}\circ V$ a.s., i.e.\ $\gamma_{U|V}^{\ABLin}$ is an LCEF.
\end{theorem}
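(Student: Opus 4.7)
The plan is to verify directly that the proposed operator is well-defined in $\ABLin(\cH;\cG)$, and that $\gamma^{\ABLin}_{U|V} \circ V$ is the $L^2(\bP;\cG)$-orthogonal projection of $U$ onto $\overline{\ABLin_{\cG} \circ V}$; once this is established, the minimisation claim within $\ABLin(\cH;\cG)$ follows at once because $\gamma^{\ABLin}_{U|V} \circ V$ already lies in $\ABLin(\cH;\cG) \circ V$.

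First I would prove boundedness of $C_V^{\dagger} C_{VU} \colon \cG \to \cH$. By Douglas's range inclusion theorem, the hypothesis $\ran C_{VU} \subseteq \ran C_V$ produces a bounded operator $D \colon \cG \to \cH$ with $C_{VU} = C_V D$. Since $C_V = C_V^{\ast}$, the composition $C_V^{\dagger} C_V$ agrees on its dense domain with the orthogonal projection $P_{\overline{\ran C_V}}$, so $C_V^{\dagger} C_{VU} = P_{\overline{\ran C_V}} D$ is bounded. Consequently $M \defeq (C_V^{\dagger} C_{VU})^{\ast} \in \BLin(\cH;\cG)$ and $\gamma^{\ABLin}_{U|V}(v) = \mu_U + M(v - \mu_V)$ genuinely defines an element of $\ABLin(\cH;\cG)$.

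Next I would check the variational characterisation
\[
\bE\bigl[ \innerprod{ U - \gamma^{\ABLin}_{U|V}(V) }{ \gamma(V) }_{\cG} \bigr] = 0 \qquad \text{for every } \gamma \in \ABLin(\cH;\cG),
\]
which pins down the $L^2$-orthogonal projection onto $\overline{\ABLin_{\cG} \circ V}$. It suffices to test constant and linear $\gamma$ separately. For constant $g \in \cG$, the identity reduces to $\bE[U - \gamma^{\ABLin}_{U|V}(V)] = 0$, which follows from $\bE[V - \mu_V] = 0$. For linear $A \in \BLin(\cH;\cG)$, after centering and using that $\bE[\innerprod{X}{AY}_{\cG}]$ for mean-zero $X,Y$ is a trace pairing of $A$ with the cross-covariance, the condition becomes
\[
\Cov\bigl[ U - M(V - \mu_V), V \bigr] = C_{UV} - M C_V = 0,
\]
i.e.\ $C_V M^{\ast} = C_{VU}$ after taking adjoints. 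Substituting $M^{\ast} = C_V^{\dagger} C_{VU}$ and using $C_V C_V^{\dagger} = P_{\overline{\ran C_V}}$ on $\dom C_V^{\dagger}$ together with the range inclusion $\ran C_{VU} \subseteq \ran C_V \subseteq \overline{\ran C_V}$ delivers this identity.

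The main obstacle is the initial boundedness claim: for infinite-dimensional $\cH$, the trace-class operator $C_V$ is typically compact with non-closed range, so $C_V^{\dagger}$ is unbounded, and the symbol $C_V^{\dagger} C_{VU}$ is a priori only an unbounded operator densely defined on $\cG$. It is precisely the range assumption (via Douglas's factorisation) that upgrades it to a bounded operator, and this is exactly what will separate the compatible case treated here from the incompatible case in the following theorem.
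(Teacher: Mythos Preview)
Your proposal is correct and follows essentially the same route as the paper: invoke Douglas's range inclusion theorem to obtain boundedness of $C_V^{\dagger}C_{VU}$ together with $C_V(C_V^{\dagger}C_{VU}) = C_{VU}$, then verify the $L^2(\bP;\cG)$-orthogonality condition $\innerprod{U - \gamma^{\ABLin}_{U|V}\circ V}{\gamma\circ V}_{L^2(\bP;\cG)} = 0$ for all $\gamma\in\ABLin(\cH;\cG)$ by reducing it to $C_{UV} - MC_V = 0$. The only cosmetic difference is that the paper handles the affine test functions in one stroke via the trace identity $\innerprod{U-\mu_U}{\gamma\circ V}_{L^2} = \trace(\Cov[U,V]\,\overline{\gamma}^{\ast})$, whereas you split constants and linear parts explicitly.
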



\proofinappendix


\Cref{theorem:LinearConditionalMeanUnderRangeAssumption} is a genuine generalisation of the case $\dim\cH < \infty$ for the following reason, which is a direct consequence of \Cref{theorem:BakerDecompositionCrosscovarianceOperator}:


\begin{corollary}
	\label{corollary:RangeInclusionIfClosedRange}
	With the notation of \Cref{section:GeneralSetupAndNotation}, the condition $\ran C_{VU}\subseteq \ran C_{V}$ is always fulfilled whenever $C_{V}$ has closed range, and, in particular, if $\cH$ is finite dimensional.
\end{corollary}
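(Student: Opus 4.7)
The plan is to reduce the inclusion $\ran C_{VU}\subseteq \ran C_{V}$ to a range identity for the positive square root of $C_{V}$, using Baker's factorisation of cross-covariance operators (the cited \Cref{theorem:BakerDecompositionCrosscovarianceOperator}), which provides a bounded contraction $R\colon\cG\to\cH$ with
\[
    C_{VU} = C_{V}^{1/2}\, R\, C_{U}^{1/2}.
\]
From this factorisation I read off immediately that $\ran C_{VU}\subseteq \ran C_{V}^{1/2}$, without any extra hypothesis on $C_{V}$. So the corollary will follow once I know that $\ran C_{V}^{1/2} = \ran C_{V}$ under the closed-range assumption.

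For this second step I would invoke the standard fact that, for a non-negative self-adjoint operator $T$ on a Hilbert space, one always has $\ran T \subseteq \ran T^{1/2}$, with equality precisely when $\ran T$ is closed. One quick way: polar decomposition gives $T = T^{1/2}\cdot T^{1/2}$, hence $\ran T \subseteq \ran T^{1/2}$; conversely, if $\ran T$ is closed then $T$ restricted to $(\ker T)^{\perp}$ is a bounded bijection onto $\ran T$, which forces $\ran T^{1/2}\subseteq \ran T$ since $\ker T^{1/2}=\ker T$ and $\ran T^{1/2}\supseteq \ran T$ is closed as well. I could also simply quote this equivalence from standard operator theory (e.g.\ via the spectral theorem applied to $T$, where closedness of $\ran T$ forces $0$ to be isolated in the spectrum, and then $T^{1/2}$ and $T$ have the same range). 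Either route is a one-liner.

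Combining the two steps: if $\ran C_{V}$ is closed, then $\ran C_{V}^{1/2} = \ran C_{V}$, and Baker's decomposition yields
\[
    \ran C_{VU} \subseteq \ran C_{V}^{1/2} = \ran C_{V},
\]
which is the required inclusion. The finite-dimensional case is then a one-line consequence, since every linear subspace of the finite-dimensional space $\cH$ is automatically closed, so $\ran C_{V}$ is closed trivially.

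I do not anticipate a real obstacle here: both ingredients (Baker's factorisation and the $\ran T = \ran T^{1/2}$ equivalence for closed-range non-negative self-adjoint $T$) are standard and, as hinted in the statement of the corollary itself, the result is presented as a ``direct consequence'' of \Cref{theorem:BakerDecompositionCrosscovarianceOperator}. The only point requiring minor care is cleanly citing (or giving in a sentence) the closed-range $\Leftrightarrow$ $\ran T = \ran T^{1/2}$ equivalence so that the proof is self-contained.
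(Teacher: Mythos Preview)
Your proposal is correct and follows exactly the route the paper intends: the paper states the corollary as ``a direct consequence of \Cref{theorem:BakerDecompositionCrosscovarianceOperator}'' without further elaboration, and your two-step argument (Baker's factorisation gives $\ran C_{VU}\subseteq\ran C_{V}^{1/2}$, then closed range of $C_{V}$ forces $\ran C_{V}^{1/2}=\ran C_{V}$) is precisely how one unpacks that remark. The only addition you make is to spell out the $\ran T=\ran T^{1/2}$ equivalence, which the paper leaves implicit.
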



\subsection{Explicit Formula for the LCE: Incompatible Case}

We are now going to treat the general case, in which the orthogonal projection $\bE^{\ABLin}[U|V]$ of $U$ can not be expected to lie in $\ABLin(\cH;\cG)\circ V$.
We are therefore going to approximate $\bE^{\ABLin}[U|V]$ by a sequence of bounded (in fact, even finite-rank) operators $\gamma_{U|V}^{(n)} \in \ABLin(\cH;\cG)$ composed with $V$, where the convergence will hold in the $L^{2}(\bP;\cG)$ norm as well as a.s.
This requires some additional notation.

\begin{notation}
	\label{notation:NotationForIncompatibleLCM}
	Let $\dim \cH = \infty$\footnote{This assumption is not substantial and we make it merely for the sake of simplifying our notation. Note that the finite-dimensional case has been analysed in the previous subsection.}
	and recall the notation of \Cref{section:GeneralSetupAndNotation}.
	Consider the eigendecomposition of the covariance operator $C_V$,
	\[
		C_V
		=
		\sum_{n\in \bN} \sigma_n\, h_n\otimes h_n,
		\qquad
		\sigma_n^{2} \geq 0,
	\]
	where $(h_n)_{n\in\bN}$ is a complete orthonormal system of $\cH$.
	Let $n\in\bN$, $\cH^{(n)} \defeq \spn \{ h_1,\dots,h_n \}$, $V^{(n)} \defeq P_{\cH^{(n)}} V$ and
	\[
		C
		\defeq
		\begin{pmatrix}
			C_{U} & C_{UV}
			\\
			C_{VU} & C_{V}
		\end{pmatrix},
		\qquad
		C^{(n)}
		\defeq
		P_{\cH^{(n)}} C P_{\cH^{(n)}}
		=
		\begin{pmatrix}
			C_{U} & C_{UV}^{(n)}
			\\
			C_{VU}^{(n)} & C_{V}^{(n)}
		\end{pmatrix}.
	\]
	Since $C_{V}^{(n)}$ has finite rank, \Cref{theorem:BakerDecompositionCrosscovarianceOperator} yields $\ran C_{VU}^{(n)}\subseteq \ran C_{V}^{(n)}$ and we can define the operator $\gamma_{U|V}^{(n)} \in \ABLin(\cH ; \cG)$ by
	\[
		\gamma_{U|V}^{(n)}(v)
		\defeq
		\mu_U +
		\bigl( C_{V}^{(n)\dagger} C_{VU}^{(n)}\bigr)^{\ast}(v-\mu_V).
	\]
	Further, by \Cref{theorem:BakerDecompositionCrosscovarianceOperator} and adopting the notation therein, the operator $M_{VU}\defeq (C_{V}^{1/2})^{\dagger} C_{VU} = R_{VU} C_{U}^{1/2}\colon \cG\to \cH$ is well defined and bounded (in fact, it is even Hilbert--Schmidt).
\end{notation}


\begin{lemma}
	\label{lemma:MartingaleLemmaEqualityOfSpaces}
	With the notation of \Cref{section:GeneralSetupAndNotation} and using \Cref{notation:NotationForIncompatibleLCM},
	\begin{equation}
	\label{equ:TechnicalIdentityOfSpaces}
	\overline{\ABLin_{\cG}\circ V} \cap L^{2}(\Omega,\sigma(V^{(n)}) ; \cG) = \overline{\ABLin_{\cG}\circ V^{(n)}}.
	\end{equation}
\end{lemma}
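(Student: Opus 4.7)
The plan is to prove the two inclusions separately. The inclusion $\supseteq$ is essentially bookkeeping: any $b + B V^{(n)} \in \ABLin(\cH;\cG) \circ V^{(n)}$ equals $b + (B \circ P_{\cH^{(n)}}) V \in \ABLin(\cH;\cG) \circ V$, because $V^{(n)} = P_{\cH^{(n)}} V$, and is automatically $\sigma(V^{(n)})$-measurable. Hence $\ABLin(\cH;\cG) \circ V^{(n)}$ is contained in the $L^{2}$-closed intersection $\overline{\ABLin_{\cG} \circ V} \cap L^{2}(\Omega, \sigma(V^{(n)}); \cG)$, and the inclusion extends to the closure.

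For the inclusion $\subseteq$, write $\pi_n$ for the $L^{2}(\bP;\cG)$-orthogonal projection onto $\overline{\ABLin_{\cG} \circ V^{(n)}}$, equivalently $\pi_n = \bE^{\ABLin}[\,\cdot\, \mid V^{(n)}]$. For $X$ in the LHS the goal is $\pi_n X = X$. I would approximate $X = \lim_k Y_k$ in $L^{2}$ with $Y_k = b_k + A_k V \in \ABLin(\cH;\cG) \circ V$ and decompose $A_k V = A_k V^{(n)} + A_k V^{\perp,(n)}$, where $V^{\perp,(n)} \defeq V - V^{(n)} = P_{(\cH^{(n)})^{\perp}} V$. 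The first summand already lies in $\overline{\ABLin_{\cG} \circ V^{(n)}}$. For the second, the spectral decomposition of $C_V$ gives $\Cov[V^{(n)}, V^{\perp,(n)}] = 0$, and \Cref{theorem:LinearConditionalMeanUnderRangeAssumption} (applicable since $C_{V^{(n)}}$ has finite rank, hence closed range) then yields $\bE^{\ABLin}[A_k V^{\perp,(n)} \mid V^{(n)}] = A_k(\mu_V - \mu_{V^{(n)}})$, a constant in $\cG$. Consequently $\pi_n Y_k = b_k + A_k V^{(n)} + A_k(\mu_V - \mu_{V^{(n)}}) \in \overline{\ABLin_{\cG} \circ V^{(n)}}$, and by $L^{2}$-continuity of $\pi_n$, $\pi_n X = \lim_k \pi_n Y_k$ also lies in $\overline{\ABLin_{\cG} \circ V^{(n)}}$.

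The main obstacle is then the final step, namely to verify $\pi_n X = X$, equivalently that the residual $Z \defeq X - \pi_n X$ vanishes $\bP$-a.s. By construction $Z$ is $\sigma(V^{(n)})$-measurable, belongs to $\overline{\ABLin_{\cG} \circ V}$, and is orthogonal to $\overline{\ABLin_{\cG} \circ V^{(n)}}$; subtracting the formula for $\pi_n Y_k$ from $Y_k$ and passing to the $L^{2}$-limit further identifies $Z = \lim_k A_k(V^{\perp,(n)} - (\mu_V - \mu_{V^{(n)}}))$, so that $Z$ sits inside the closed linear span (in $L^{2}$) of centered bounded linear images of $V^{\perp,(n)}$. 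To force $Z = 0$, I plan to exploit a combination of the zero cross-covariance $\Cov[V^{(n)}, V^{\perp,(n)}] = 0$, the orthogonality $Z \perp \overline{\ABLin_{\cG} \circ V^{(n)}}$, and the tower identity from \Cref{theorem:BasicPropertiesLCM}\ref{item:TowerPropertyLCM} applied with $\varphi = P_{\cH^{(n)}}$, thereby ruling out simultaneous nontrivial $\sigma(V^{(n)})$-dependence and mean-zero dependence on $V^{\perp,(n)}$. I expect this last step to be the subtlest part of the argument, requiring careful use of the spectral decomposition of $C_V$ together with the filtration-like structure of the spaces $\overline{\ABLin_{\cG} \circ V^{(m)}}$ suggested by the martingale flavour of the lemma.
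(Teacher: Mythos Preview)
Your treatment of $\supseteq$ and your computation of $\pi_n Y_k$ are correct. The final step, however, does not close with the tools you invoke. You correctly observe that $Z \defeq X - \pi_n X$ is simultaneously $\sigma(V^{(n)})$-measurable and an $L^2$-limit of centred bounded-linear images of $V^{\perp,(n)}$, hence also $\sigma(V^{\perp,(n)})$-measurable, while $\Cov[V^{(n)},V^{\perp,(n)}]=0$. But zero cross-covariance does not make $\sigma(V^{(n)})\cap\sigma(V^{\perp,(n)})$ trivial: the Karhunen--Lo\`eve coefficients $Z_i=\innerprod{V-\mu_V}{h_i}_{\cH}$ are merely uncorrelated, so some $Z_i$ with $i>n$ may well be a deterministic nonlinear function of $Z_1,\dots,Z_n$, and then a linear functional of $V^{\perp,(n)}$ can be genuinely $\sigma(V^{(n)})$-measurable without being affine in $V^{(n)}$. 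None of the tower properties you cite from \Cref{theorem:BasicPropertiesLCM}\ref{item:TowerPropertyLCM} helps here, since they relate LCE projections to one another and give no control over the \emph{true} conditional dependence between $V^{(n)}$ and $V^{\perp,(n)}$; your proposed combination of orthogonality, zero cross-covariance, and the filtration structure cannot rule out a nonzero $Z$ on covariance information alone.

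The paper proceeds differently for $\subseteq$: instead of the LCE projection $\pi_n=\bE^{\ABLin}[\,\cdot\mid V^{(n)}]$, it applies the \emph{true} conditional expectation $\bE[\,\cdot\mid V^{(n)}]$. Because this is an $L^2$-contraction and fixes $W$ (since $W$ is $\sigma(V^{(n)})$-measurable), one obtains directly
\[
\bignorm{\bE[\gamma_k\circ V\mid V^{(n)}]-W}_{L^2(\bP;\cG)}
\le
\bignorm{\gamma_k\circ V - W}_{L^2(\bP;\cG)}
\xrightarrow[k\to\infty]{}0,
\]
with no residual to analyse separately; the remaining task is only to identify $\bE[\gamma_k\circ V\mid V^{(n)}]$ as an explicit affine function $\tilde\gamma_k$ of $V^{(n)}$ (a constant shift of $\gamma_k$), which the paper does via $\bE[\sum_i Z_i h_i\mid Z^{(n)}]=\sum_{i\le n}Z_i h_i$. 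The conceptual gain over your route is that contractivity drives the approximants toward $W$ automatically, rather than leaving a residual $Z$ whose vanishing must be argued afterward. That said, the paper's identification step amounts to $\bE[Z_i\mid Z_1,\dots,Z_n]=0$ for $i>n$, which is exactly the same structural property of the $Z_i$ --- going beyond mere uncorrelatedness --- that obstructs your residual argument.
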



\proofinappendix


\begin{theorem}[Martingale property and martingale convergence theorem]
	\label{thm:MartingaleProperty}
	With the notation of \Cref{section:GeneralSetupAndNotation} and using \Cref{notation:NotationForIncompatibleLCM},
	\begin{enumerate}[label = (\alph*)]
		\item
		\label{item:MartingaleProperty}
		$(\bE^{\ABLin}[U|V^{(n)}])_{n\in\bN}$ is a martingale with respect to the filtration $(\sigma(V^{(n)}))_{n\in\bN}$; more precisely,
		\begin{equation}
			\label{equ:MartingaleProperty}
			\bE \bigl[ \bE^{\ABLin}[U|V] \, \big|\, V^{(n)} \bigr]
			=
			\bE^{\ABLin}[U|V^{(n)}] \text{ a.s.;}	
		\end{equation}
		\item
		\label{item:MartingaleConvergenceTheorem}
		the following martingale convergence theorem holds a.s.\ and in $L^{p}(\bP;\cG)$ for $1\le p \le 2$:
		\begin{equation}
			\label{equ:MartingaleConvergenceTheorem}
			\bE^{\ABLin}[U|V^{(n)}]
			\xrightarrow[n\to\infty]{}
			\bE^{\ABLin}[U|V].
		\end{equation}		
	\end{enumerate}	
\end{theorem}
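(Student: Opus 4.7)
The plan is to derive both statements from Lemma~\ref{lemma:MartingaleLemmaEqualityOfSpaces} combined with standard Hilbert-space projection arguments and Doob's martingale convergence theorem. Write $\pi \defeq P_{\overline{\ABLin_{\cG} \circ V}}$ and $\pi^{(n)} \defeq P_{\overline{\ABLin_{\cG} \circ V^{(n)}}}$, so that $\bE^{\ABLin}[U|V] = \pi U$ and $\bE^{\ABLin}[U|V^{(n)}] = \pi^{(n)} U$. For part~\ref{item:MartingaleProperty}, set $X \defeq \pi U$ and $Y \defeq \bE[X \mid V^{(n)}]$, and verify that $Y$ coincides with $\pi^{(n)} U$ by establishing the two defining properties of the orthogonal projection onto $\overline{\ABLin_{\cG} \circ V^{(n)}}$: (i) $Y \in \overline{\ABLin_{\cG} \circ V^{(n)}}$; (ii) $U - Y \perp \overline{\ABLin_{\cG} \circ V^{(n)}}$ in $L^{2}(\bP;\cG)$. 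Property~(ii) follows from the tower property of conditional expectation: for $\eta \in \overline{\ABLin_{\cG} \circ V^{(n)}}$, the $\sigma(V^{(n)})$-measurability of $\eta$ gives $\innerprod{Y}{\eta}_{L^{2}} = \innerprod{X}{\eta}_{L^{2}}$, and then $\eta \in \overline{\ABLin_{\cG} \circ V^{(n)}} \subseteq \overline{\ABLin_{\cG} \circ V}$ combined with $U - X \perp \overline{\ABLin_{\cG} \circ V}$ yields $\innerprod{X}{\eta}_{L^{2}} = \innerprod{U}{\eta}_{L^{2}}$.

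Property~(i) is the crux. Since $Y$ is automatically square-integrable and $\sigma(V^{(n)})$-measurable, Lemma~\ref{lemma:MartingaleLemmaEqualityOfSpaces} reduces the task to showing $Y \in \overline{\ABLin_{\cG} \circ V}$. Approximating $X$ in $L^{2}(\bP;\cG)$ by elements $b_k + A_k V$ with $b_k \in \cG$ and $A_k \in \BLin(\cH;\cG)$, and using that bounded linear operators commute with the Bochner conditional expectation, one obtains $Y = \lim_{k}(b_k + A_k \bE[V \mid V^{(n)}])$ in $L^{2}$. I would complete this step by exploiting the Karhunen--Lo\`eve structure: since $V - V^{(n)} = (I - P_{\cH^{(n)}}) V$ lies in $(\cH^{(n)})^{\perp}$, the decomposition $\bE[V \mid V^{(n)}] = V^{(n)} + \bE[(I - P_{\cH^{(n)}}) V \mid V^{(n)}]$ has its second summand orthogonal to $\cH^{(n)}$, which together with the specific form of the operators $A_k$ supplied by the LCE construction allows one to rewrite each approximant as an element of $\overline{\ABLin_{\cG} \circ V}$ and to pass to the limit. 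Uniqueness of the orthogonal projection then gives $Y = \pi^{(n)} U$, establishing $\bE[\bE^{\ABLin}[U|V] \mid V^{(n)}] = \bE^{\ABLin}[U|V^{(n)}]$.

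For part~\ref{item:MartingaleConvergenceTheorem}, I would obtain the $L^{2}$-convergence from the monotone-subspace convergence of orthogonal projections applied to $\overline{\ABLin_{\cG} \circ V^{(n)}} \uparrow \overline{\ABLin_{\cG} \circ V}$; monotonicity is immediate from $\cH^{(n)} \subseteq \cH^{(n+1)}$, and density of the union follows because $b + A V^{(n)} \to b + A V$ in $L^{2}(\bP;\cG)$ for every $b \in \cG$ and $A \in \BLin(\cH;\cG)$, which in turn uses $V^{(n)} \to V$ in $L^{2}(\bP;\cH)$ from the Karhunen--Lo\`eve expansion. On a probability space, $L^{2}$-convergence yields $L^{p}$-convergence for $1 \le p \le 2$. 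For the almost-sure statement, part~\ref{item:MartingaleProperty} exhibits $(\pi^{(n)} U)_{n\in\bN}$ as a Doob martingale closed by $\pi U$, so Doob's $L^{1}$-martingale convergence theorem yields $\pi^{(n)} U \to \bE[\pi U \mid \mathcal{F}_{\infty}]$ almost surely, where $\mathcal{F}_{\infty} \defeq \sigma\bigl(\bigcup_{n} \sigma(V^{(n)})\bigr)$; since $V^{(n)} \to V$ a.s., $\mathcal{F}_{\infty}$ contains $\sigma(V)$, making $\pi U$ itself $\mathcal{F}_{\infty}$-measurable, so the limit equals $\pi U$. The main obstacle throughout is property~(i) of part~\ref{item:MartingaleProperty}; all other steps are standard projection, Bochner-integration, and martingale arguments.
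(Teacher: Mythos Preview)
Your overall strategy matches the paper's: part~(a) via \Cref{lemma:MartingaleLemmaEqualityOfSpaces} together with an orthogonal-projection characterisation, and part~(b) via martingale convergence. Your part~(b) is fine and in fact more explicit than the paper, which simply invokes the Banach-valued martingale convergence theorems of Chatterji after establishing \eqref{equ:MartingaleProperty}; the one point to keep in mind is that Doob's a.s.\ convergence for $\cG$-valued closed martingales requires the Radon--Nikod\'ym property, which separable Hilbert spaces have.

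The genuine gap is in property~(i) of part~(a). You correctly reduce, via \Cref{lemma:MartingaleLemmaEqualityOfSpaces}, to showing $Y\in\overline{\ABLin_{\cG}\circ V}$, and you correctly obtain $Y=\lim_{k}\bigl(b_k+A_k\,\bE[V\mid V^{(n)}]\bigr)$ in $L^{2}$. But the completion you sketch does not work: knowing that $\bE[(I-P_{\cH^{(n)}})V\mid V^{(n)}]$ takes values in $(\cH^{(n)})^{\perp}$ is irrelevant, and there is no ``specific form of the operators $A_k$ supplied by the LCE construction'' to exploit --- the $A_k$ are arbitrary bounded operators coming from the definition of the closure $\overline{\ABLin_{\cG}\circ V}$. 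What is actually needed is that $\bE[V\mid V^{(n)}]$ is itself an \emph{affine} function of $V$, so that each approximant $b_k+A_k\,\bE[V\mid V^{(n)}]$ lies in $\ABLin(\cH;\cG)\circ V$. This is exactly the computation done inside the proof of \Cref{lemma:MartingaleLemmaEqualityOfSpaces}: writing the Karhunen--Lo\`eve expansion $V=\mu_V+\sum_i Z_i h_i$ with uncorrelated $Z_i$, one has $\bE[Z_i\mid Z^{(n)}]=0$ for $i>n$, whence $\bE[V-\mu_V\mid V^{(n)}]=V^{(n)}-\mu_V^{(n)}=P_{\cH^{(n)}}(V-\mu_V)$. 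With this in hand, the paper collapses the entire argument for (a) into the single line
\[
P_{L^{2}(\Omega,\sigma(V^{(n)});\cG)}\, P_{\overline{\ABLin_{\cG}\circ V}}
=
P_{\overline{\ABLin_{\cG}\circ V^{(n)}}},
\]
which is the content of your steps (i) and (ii) combined.
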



\proofinappendix


\begin{theorem}[Formula for the LCE: incompatible case]
	\label{theorem:LinearConditionalMeanIncompatibleCase}
	With the notation of \Cref{section:GeneralSetupAndNotation} and using \Cref{notation:NotationForIncompatibleLCM}, the operators $\gamma_{U|V}^{(n)}$ minimize the functional $\cE_{U|V}$ given by \eqref{equ:FunctionalConditionalExpectation} within $\ABLin(\cH ; \cG)$ for $n\to\infty$, i.e.
	\[
		\cE_{U|V}(\gamma_{U|V}^{(n)})
		\ \xrightarrow[n\to\infty]{}\
		\inf_{\gamma \in \ABLin(\cH ; \cG)} \cE_{U|V}(\gamma).
	\]
	In other words,
	\begin{equation}
		\label{equ:L2ConvergenceToLCE}
		\bignorm{\bE^{\ABLin}[U|V] - \gamma_{U|V}^{(n)} \circ V}_{ L^2(\Omega,\Sigma,\bP;\cG)} \xrightarrow[n\to\infty]{} 0.
	\end{equation}
	Further, denoting the pushforward of $\bP$ under $V$ by $\bP_{V}$, $\gamma_{U|V}^{(n)} \circ V$ converges to $\bE^{\ABLin}[U|V]$ a.s.,
	\begin{equation}
		\label{equ:ASconvergenceToLCE}
		\bignorm{ \gamma_{U|V}^{(n)}(v) - \bE^{\ABLin}[U|V=v] }_{\cG}
		\xrightarrow[n\to \infty]{}
		0
		\qquad
		\text{for $\bP_{V}$-a.e.\ } v\in\cH.
	\end{equation}
\end{theorem}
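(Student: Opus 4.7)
My plan is to reduce both convergence statements to the martingale convergence theorem \Cref{thm:MartingaleProperty}\ref{item:MartingaleConvergenceTheorem} by establishing the identification
\[
	\gamma_{U|V}^{(n)} \circ V
	=
	\bE^{\ABLin}[U \,|\, V^{(n)}]
	\qquad
	\text{$\bP$-a.s.}
\]

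First, I would verify this identification. Since $V^{(n)} = P_{\cH^{(n)}} V$ takes values in the finite-dimensional subspace $\cH^{(n)}$, the covariance operator $C_{V^{(n)}}$ has closed range, so \Cref{corollary:RangeInclusionIfClosedRange} applies and \Cref{theorem:LinearConditionalMeanUnderRangeAssumption} gives the compatible-case formula
\[
	\bE^{\ABLin}[U \,|\, V^{(n)}]
	=
	\mu_U + \bigl( C_{V^{(n)}}^{\dagger} C_{V^{(n)} U} \bigr)^{\ast}(V^{(n)} - \mu_{V^{(n)}}).
\]
A short calculation shows $C_{V^{(n)}} = C_{V}^{(n)}$ and $C_{V^{(n)} U} = C_{VU}^{(n)}$, that $(C_{V}^{(n)\dagger})^{\ast} = C_{V}^{(n)\dagger}$ by self-adjointness, and that $C_{V}^{(n)\dagger}$ annihilates $(\cH^{(n)})^{\perp} \subseteq (\ran C_{V}^{(n)})^{\perp}$. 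Using that $(V - \mu_V) - (V^{(n)} - \mu_{V^{(n)}}) \in (\cH^{(n)})^{\perp}$ and that the auxiliary factor $P_{\cH^{(n)}}$ inside $C_{UV}^{(n)} = C_{UV} P_{\cH^{(n)}}$ acts as the identity on $\ran C_{V}^{(n)\dagger} \subseteq \cH^{(n)}$, both expressions evaluate to $\mu_U + C_{UV} C_{V}^{(n)\dagger}(V^{(n)} - \mu_{V^{(n)}})$.

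Second, \Cref{thm:MartingaleProperty}\ref{item:MartingaleConvergenceTheorem} with $p = 2$ then yields immediately that $\bE^{\ABLin}[U | V^{(n)}] \to \bE^{\ABLin}[U | V]$ in $L^{2}(\bP; \cG)$ and $\bP$-a.s., which via the identification above is exactly \eqref{equ:L2ConvergenceToLCE}. Since $\bE^{\ABLin}[U|V]$ is the $L^{2}$-orthogonal projection of $U$ onto $\overline{\ABLin_{\cG}\circ V}$, one has $\cE_{U|V}(\gamma_{U|V}^{\ABLin}) = \inf_{\gamma \in \ABLin(\cH;\cG)} \cE_{U|V}(\gamma)$ (closures do not affect the infimum), so \eqref{equ:L2ConvergenceToLCE} is equivalent to the minimising claim $\cE_{U|V}(\gamma_{U|V}^{(n)}) \to \inf_{\gamma \in \ABLin(\cH;\cG)} \cE_{U|V}(\gamma)$. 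For the pointwise statement \eqref{equ:ASconvergenceToLCE}, the same theorem gives $\gamma_{U|V}^{(n)}(V(\omega)) \to \gamma_{U|V}^{\ABLin}(V(\omega))$ for $\bP$-a.e.\ $\omega$. Since each $\gamma_{U|V}^{(n)}$ is a continuous affine operator and $\gamma_{U|V}^{\ABLin}$ is Borel-measurable, the set $A \defeq \{v \in \cH : \gamma_{U|V}^{(n)}(v) \to \gamma_{U|V}^{\ABLin}(v)\}$ is Borel, $V^{-1}(A)$ is $\bP$-conull, and hence $\bP_{V}(A) = 1$, which is \eqref{equ:ASconvergenceToLCE}.

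The main obstacle is not conceptual — all the probabilistic work is packaged into \Cref{thm:MartingaleProperty}, which in turn relies on \Cref{lemma:MartingaleLemmaEqualityOfSpaces} to identify the relevant tower of projection subspaces — but rather the careful bookkeeping in the identification step. One must check that the compatible-case formula for $\gamma_{U|V}^{(n)}$, which is centred at $\mu_V$ and defined on all of $\cH$, reproduces the canonical LCE formula for $U$ given $V^{(n)}$ after the hidden projections $P_{\cH^{(n)}}$ and the pseudo-inverse $C_{V}^{(n)\dagger}$ have been unravelled correctly.
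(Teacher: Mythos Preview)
Your proposal is correct and follows essentially the same route as the paper: identify $\gamma_{U|V}^{(n)} \circ V = \bE^{\ABLin}[U \mid V^{(n)}]$ via the compatible-case formula (\Cref{theorem:LinearConditionalMeanUnderRangeAssumption}) and then invoke the martingale convergence theorem (\Cref{thm:MartingaleProperty}). You supply more detail in the identification step and in passing from $\bP$-a.s.\ to $\bP_{V}$-a.e.\ convergence than the paper does, but the strategy is the same.
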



\proofinappendix


\subsection{Explicit Formula for the LCE: Regularised Case}
\label{section:RegularizedLCM}

In most practical applications the means and (cross-)covariance operators of $U$ and $V$ are not accessible explicitly, but have to be approximated empirically from data (in the simplest case, from independent and identically distributed samples $(u_n,v_n)\sim \bP_{UV}$, where $\bP_{UV}$ denotes the joint distribution of $U$ and $V$).
Since the Moore--Penrose pseudo-inverse $C_{V}^{\dagger}$ shows an unstable behaviour when approximated empirically \citep[Section~SM2]{klebanov2019rigorous}, it is typically replaced by its regularised version $(C_{V} + \varepsilon \Id_{\cH})^{-1}$, where $\varepsilon > 0$ is a regularisation parameter.
The following theorem shows that this is a principled way to address this issue, since the resulting operators minimize a perturbed functional $\cE_{U|V}^{\textup{reg}}$.
The natural space of operators in this context turns out to be the space of affine Hilbert--Schmidt operators.

\begin{theorem}[Formula for the LCE: regularised case]
	\label{theorem:LinearConditionalMeanRegularized}
	With the notation of \Cref{section:GeneralSetupAndNotation}, the operator $\gamma_{\varepsilon}^{\AHS}\in \AHS (\cH;\cG)$ defined by
	\begin{align*}
		\gamma_{\varepsilon}^{\AHS}(v)
		&\defeq
		\mu_U + C_{UV} (C_{V} + \varepsilon \Id_{\cH})^{-1}(v - \mu_V)
	\end{align*}
	minimizes the Tikhonov--Philipps-regularised functional
	\[
	\cE_{U|V}^{\textup{reg}}(\gamma)
	=
	\cE_{U|V}(\gamma) +
	\varepsilon \norm{\gamma}_{\AHS}^{2}
	\]
	within $\AHS (\cH;\cG)$, where $\cE_{U|V}$ is given by \eqref{equ:FunctionalConditionalExpectation}.
\end{theorem}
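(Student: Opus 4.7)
The plan is to exploit the Hilbert structure on $\cG \times \HS(\cH;\cG)$: parametrise $\gamma \in \AHS(\cH;\cG)$ as $\gamma(v) = b + Av$ with $b \in \cG$ and $A = \overline{\gamma} \in \HS(\cH;\cG)$, so that $\norm{\gamma}_{\AHS}^{2} = \norm{A}_{\HS}^{2}$. For $\varepsilon > 0$, the regularised functional $\cE_{U|V}^{\textup{reg}}$ is a strictly convex quadratic in $(b,A)$, and I would therefore locate its unique minimiser by first-order optimality.

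First I would minimise over $b$ for fixed $A$. Differentiating $\bE[\norm{U - b - AV}_{\cG}^{2}]$ in $b$ and setting the derivative to zero yields $b = \mu_U - A\mu_V$, so one may restrict attention to $\gamma(v) = \mu_U + A(v - \mu_V)$. Writing $\tilde U \defeq U - \mu_U$ and $\tilde V \defeq V - \mu_V$, the remaining task is to minimise
\[
	F(A) \defeq \bE\bigl[\norm{\tilde U - A\tilde V}_{\cG}^{2}\bigr] + \varepsilon \norm{A}_{\HS}^{2}
\]
over $A \in \HS(\cH;\cG)$. Using the rank-one Hilbert--Schmidt identity $\innerprod{A}{g \otimes h}_{\HS} = \innerprod{g}{Ah}_{\cG}$ together with $\bE[\tilde U \otimes \tilde V] = C_{UV}$ as a Bochner integral, the cross-term expands as $\bE[\innerprod{\tilde U}{A\tilde V}_{\cG}] = \innerprod{A}{C_{UV}}_{\HS}$, and the quadratic term as $\bE[\norm{A\tilde V}_{\cG}^{2}] = \trace(A^{\ast}A C_V) = \norm{A C_V^{1/2}}_{\HS}^{2}$.

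Introducing $B_{\varepsilon} \defeq C_V + \varepsilon \Id_{\cH}$, which is self-adjoint, positive, bounded, and \emph{boundedly} invertible since $\varepsilon > 0$, I would then complete the square in the Hilbert--Schmidt norm to obtain
\[
	F(A) = \bignorm{A B_{\varepsilon}^{1/2} - C_{UV} B_{\varepsilon}^{-1/2}}_{\HS}^{2} + (\text{terms independent of } A).
\]
The unique minimiser is then obtained by solving $A B_{\varepsilon}^{1/2} = C_{UV} B_{\varepsilon}^{-1/2}$, giving $A = C_{UV}(C_V + \varepsilon \Id_{\cH})^{-1}$; combined with $b = \mu_U - A\mu_V$, this is precisely the claimed formula for $\gamma_{\varepsilon}^{\AHS}$. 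Since $C_{UV}$ is Hilbert--Schmidt and $(C_V + \varepsilon \Id_{\cH})^{-1}$ is bounded, the minimiser lies in $\HS(\cH;\cG)$, so $\gamma_{\varepsilon}^{\AHS} \in \AHS(\cH;\cG)$ as required.

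The main technical obstacle is justifying the operator-trace manipulations in infinite dimensions: one must verify that the expectation passes through the Hilbert--Schmidt inner product (a consequence of the Bochner integrability of $\tilde U \otimes \tilde V$ noted in \Cref{section:GeneralSetupAndNotation}), that $\trace(A^{\ast}A C_V) = \norm{A C_V^{1/2}}_{\HS}^{2}$ is well-defined using that $C_V$ is trace class and $A$ is Hilbert--Schmidt, and that the square-completion identity is legitimate with the factors $B_{\varepsilon}^{\pm 1/2}$. The crucial regularising effect of $\varepsilon > 0$ is that $B_{\varepsilon}$ is bounded below by $\varepsilon \Id_{\cH}$, making $B_{\varepsilon}^{-1/2}$ bounded; this sidesteps the range-inclusion difficulty which, in the unregularised setting, necessitated the finite-rank approximation argument of \Cref{theorem:LinearConditionalMeanIncompatibleCase}.
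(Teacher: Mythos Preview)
Your proof is correct and follows a genuinely different route from the paper's. The paper takes a \emph{verification} approach: it starts with the candidate $\gamma_{\varepsilon}^{\AHS}$, writes an arbitrary competitor as $\gamma = \gamma_{\varepsilon}^{\AHS} + \delta$, and shows via \Cref{lemma:L2InnerProductTraceCovariance} that the cross term $-2\,\bE[\innerprod{U-\gamma_{\varepsilon}^{\AHS}(V)}{\delta(V)}_{\cG}] + 2\varepsilon\,\innerprod{\gamma_{\varepsilon}^{\AHS}}{\delta}_{\AHS}$ vanishes, leaving only the nonnegative remainder $\bE[\norm{\delta(V)}_{\cG}^{2}] + \varepsilon\norm{\delta}_{\AHS}^{2}$. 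You instead take a \emph{constructive} approach: optimise out $b$, rewrite the reduced functional as a Hilbert--Schmidt quadratic form in $A$, and complete the square using the boundedly invertible $B_{\varepsilon}^{1/2}$. Your approach has the advantage of \emph{discovering} the minimiser rather than merely checking it, and makes transparent why the regularisation is essential (it is exactly what allows the square-completion via $B_{\varepsilon}^{-1/2}$). The paper's approach is slightly more economical in that it avoids the square-root manipulations and the cyclic-trace argument needed to identify $\innerprod{A}{C_{UV}}_{\HS}$ with $\innerprod{AB_{\varepsilon}^{1/2}}{C_{UV}B_{\varepsilon}^{-1/2}}_{\HS}$; the trace identities you flag as technical obstacles are packaged there into the single appeal to \Cref{lemma:L2InnerProductTraceCovariance}.
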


\proofinappendix

\subsection{Explicit Formula for the Linear Conditional Covariance}

Before we derive a formula for the LCC $\Cov^{\ABLin}[U,W|V]$, the following theorem states an explicit formula for and some basic properties of the ALCC $\Cov^{\ABLin}_{V}[U,W]$.
In particular, as mentioned earlier in \Cref{remark:ComparisonOfConditionalCovarianceOperatorDefinitions}, we characterise the ALCC as the expected value $\bE\bigl[\Cov^{\ABLin}[U,W|V]\bigr]$ of the LCC --- hence our terminology for each of these conditional covariances.


\begin{theorem}[Properties of the ALCC]
	\label{thm:PropertiesLinearConditionalCovariance}
	With the notation of \Cref{section:GeneralSetupAndNotation} and using \Cref{notation:NotationForIncompatibleLCM},
	\begin{enumerate}[label = (\alph*)]
		\item
		\label{item:AverageConditionalCovariance}
		$\bE\bigl[\Cov^{\ABLin}[U,W|V]\bigr]
		=
		\Cov^{\ABLin}_{V}[U,W]$;
		\item
		\label{item:UpperBoundExpectedConditionalCovariance}
		$\Cov^{\ABLin}_{V}[U]
		=
		\bE\bigl[\Cov^{\ABLin}[U|V]\bigr]
		\ge
		\bE\bigl[\Cov[U|V]\bigr]
		\ge
		0$;
		\item
		\label{item:GaussianFormulaForConditionalCovariance}
		$\Cov^{\ABLin}_{V}[U,W]
		=
		C_{UW} - M_{VU}^{\ast} M_{VW}$,
		\\[1ex]
		in particular, in the compatible case $\ran C_{VW} \subseteq \ran C_{V}$,
		\\[1ex]
		$\displaystyle
		\Cov^{\ABLin}_{V}[U,W]
		=
		C_{UW} - C_{UV} C_{V}^{\dagger} C_{VW}$.
	\end{enumerate}
\end{theorem}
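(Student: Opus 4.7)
Part (a) is an immediate consequence of the definition of the LCC and the law of total linear expectation (\Cref{theorem:BasicPropertiesLCM}\ref{item:LawOfTotalExpectationLCM}) applied to the Hilbert--Schmidt-valued random variable $X\defeq R^{\ABLin}[U|V]\otimes R^{\ABLin}[W|V]$: one obtains $\bE[\Cov^{\ABLin}[U,W|V]]=\bE[\bE^{\ABLin}[X|V]]=\bE[X]=\Cov^{\ABLin}_{V}[U,W]$, the final equality holding because both residuals are centred by \Cref{lemma:CovarianceFormulaForLCM}. For part (b), $\bE[\Cov[U|V]]\ge 0$ is immediate from the pointwise non-negativity of $\Cov[U|V]$. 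For the first inequality I would test against an arbitrary $g\in\cG$: by \Cref{theorem:BasicPropertiesLCM}\ref{item:LinearityLCM}, $\innerprod{\bE^{\ABLin}[U|V]}{g}_{\cG}=\bE^{\ABLin}[\innerprod{U}{g}_{\cG}|V]$ lies in $\overline{\ABLin_{\bR}\circ V}\subseteq L^{2}(\Omega,\sigma(V),\bP)$, whereas $\bE[\innerprod{U}{g}_{\cG}|V]$ is the orthogonal projection of $\innerprod{U}{g}_{\cG}$ onto the larger space $L^{2}(\Omega,\sigma(V),\bP)$. The minimisation characterisation of orthogonal projections then gives
\[
	\innerprod{\bE[\Cov[U|V]]g}{g}_{\cG}
	=\bE\bigl[(\innerprod{U}{g}_{\cG}-\bE[\innerprod{U}{g}_{\cG}|V])^{2}\bigr]
	\le \bE\bigl[\innerprod{R^{\ABLin}[U|V]}{g}_{\cG}^{2}\bigr]
	=\innerprod{\Cov^{\ABLin}_{V}[U]g}{g}_{\cG},
\]
which is the required operator inequality.

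For part (c) I would first use \Cref{lemma:CovarianceFormulaForLCM} twice --- to see that $\Cov[R^{\ABLin}[U|V],\bE^{\ABLin}[W|V]]=0$ (applying the lemma with test variable $\bE^{\ABLin}[W|V]\in\overline{\ABLin_{\cF}\circ V}$), and that $\bE[R^{\ABLin}[U|V]]=0$ --- to arrive at
\[
	\Cov^{\ABLin}_{V}[U,W]
	=\Cov[R^{\ABLin}[U|V],W]
	=C_{UW}-\Cov\bigl[\bE^{\ABLin}[U|V],W\bigr].
\]
It then remains to identify $\Cov\bigl[\bE^{\ABLin}[U|V],W\bigr]=M_{VU}^{\ast}M_{VW}$. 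I would invoke the martingale convergence theorem (\Cref{thm:MartingaleProperty}\ref{item:MartingaleConvergenceTheorem}), which gives $\gamma_{U|V}^{(n)}\circ V\to \bE^{\ABLin}[U|V]$ in $L^{2}(\bP;\cG)$; combined with the estimate $\norm{\Cov[X,Y]}_{\HS}\le\norm{X}_{L^{2}}\norm{Y}_{L^{2}}$ this yields $L^{2}$-continuity of $\Cov$ and thus $\Cov[\bE^{\ABLin}[U|V],W]=\lim_{n}\Cov[\gamma_{U|V}^{(n)}\circ V,W]$. A direct calculation from the formula for $\gamma_{U|V}^{(n)}$ in \Cref{notation:NotationForIncompatibleLCM} gives $\Cov[\gamma_{U|V}^{(n)}\circ V,W]=C_{UV}C_{V}^{(n)\dagger}C_{VW}$; substituting the Baker decompositions $C_{UV}=C_{U}^{1/2}R_{VU}^{\ast}C_{V}^{1/2}$ and $C_{VW}=C_{V}^{1/2}R_{VW}C_{W}^{1/2}$, and computing $C_{V}^{1/2}C_{V}^{(n)\dagger}C_{V}^{1/2}=P_{K_{n}}$ in the eigenbasis of $C_{V}$ with $K_{n}\defeq \spn\{h_{m}:m\le n,\ \sigma_{m}>0\}$, rewrites this as $C_{U}^{1/2}R_{VU}^{\ast}P_{K_{n}}R_{VW}C_{W}^{1/2}$. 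Since $P_{K_{n}}\to P_{\overline{\ran C_{V}}}$ strongly as $n\to\infty$ and $\ran R_{VW}\subseteq \overline{\ran C_{V}}$ by Baker's theorem, the limit is $M_{VU}^{\ast}M_{VW}$, giving the general formula. For the compatible case, one checks that $\ran C_{VW}\subseteq \ran C_{V}$ implies $\ran M_{VW}\subseteq \ran C_{V}^{1/2}$ (via $C_{V}^{1/2}M_{VW}=C_{VW}$ and $\ker C_{V}^{1/2}\perp \overline{\ran C_{V}}$), so that $C_{V}^{\dagger}C_{VW}=(C_{V}^{1/2})^{\dagger}M_{VW}$ is bounded, whence $C_{UV}C_{V}^{\dagger}C_{VW}=C_{U}^{1/2}R_{VU}^{\ast}M_{VW}=M_{VU}^{\ast}M_{VW}$.

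The main obstacle lies in part (c): the candidate formula $C_{UV}C_{V}^{\dagger}C_{VW}$ need not even make sense in the incompatible case, which forces the argument through the Baker decomposition to exhibit the product of interest as a composition of bounded operators, and to control the passage to the limit via \emph{strong} (rather than norm) convergence of the spectral projections $P_{K_{n}}$ together with the inclusion $\ran R_{VW}\subseteq \overline{\ran C_{V}}$ supplied by Baker's theorem (which ensures that the limiting projection acts as the identity on the relevant range).
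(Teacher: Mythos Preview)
Your proposal is correct and, for parts (a) and (c), follows essentially the same route as the paper: the law of total linear expectation for (a), and for (c) passing to the finite-rank approximants $\gamma_{U|V}^{(n)}$ and identifying the limit through Baker's decomposition \Cref{theorem:BakerDecompositionCrosscovarianceOperator}. The only organisational difference in (c) is that you compute $\Cov\bigl[\bE^{\ABLin}[U|V],W\bigr]$ (approximating one argument), whereas the paper computes $\Cov\bigl[\bE^{\ABLin}[U|V],\bE^{\ABLin}[W|V]\bigr]$ (approximating both) via the factorisation $\overline{\gamma}_{U|V}^{(n)}\,C_V\,(\overline{\gamma}_{W|V}^{(n)})^{\ast}=(C_V^{1/2}(\overline{\gamma}_{U|V}^{(n)})^{\ast})^{\ast}(C_V^{1/2}(\overline{\gamma}_{W|V}^{(n)})^{\ast})$ and the observation $C_V^{1/2}(\overline{\gamma}_{U|V}^{(n)})^{\ast}=P_{\cH^{(n)}}R_{VU}C_U^{1/2}\to M_{VU}$; the two computations are equivalent by \Cref{lemma:CovarianceFormulaForLCM}.

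For part (b) you take a genuinely different route. The paper combines the classical and linear laws of total covariance (\Cref{theorem:BasicPropertiesLCM}\ref{item:LawOfTotalCovarianceLCM}) to obtain
\[
	\bE\bigl[\Cov[U|V]\bigr]
	=\Cov[U]-\Cov\bigl[\bE[U|V]\bigr]
	\le\Cov[U]-\Cov\bigl[\bE^{\ABLin}[U|V]\bigr]
	=\bE\bigl[\Cov^{\ABLin}[U|V]\bigr],
\]
invoking the operator inequality $\Cov[\bE[U|V]]\ge\Cov[\bE^{\ABLin}[U|V]]$ already established there. Your argument instead reduces to the scalar case via $\innerprod{\quark}{g}_{\cG}$ and compares the $L^{2}$ residuals of the two projections directly. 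Both are valid; yours is slightly more self-contained (it does not rely on the law-of-total-covariance machinery), while the paper's version packages the comparison at the operator level in one line.
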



\proofinappendix


\begin{remark}
The inequality $\Cov^{\ABLin}_{V}[U]\geq 0$ can also be seen more directly using 	\ref{item:GaussianFormulaForConditionalCovariance}:
\[
\Cov^{\ABLin}_{V}[U]
=
C_{U} - M_{VU}^{\ast} M_{VU}
=
C_{U} - (R_{VU} C_{U}^{1/2})^{\ast} R_{VU} C_{U}^{1/2}
=
C_{U}^{1/2} \bigl( \Id_\cG - R_{VU}^{\ast}R_{VU} \bigr) C_{U}^{1/2}
\ge
0,
\]
where we used \Cref{notation:NotationForIncompatibleLCM} and $\norm{R_{VU}}\le 1$ (cf.\ \Cref{theorem:BakerDecompositionCrosscovarianceOperator}).
While its expected value is always non-negative, the LCC $\Cov^{\ABLin}[U|V]$ itself can take on negative values, see \Cref{counterexample:CounterexamplesPropertiesLCM}\ref{item:NonNegativityLCC}.
\end{remark}

\begin{remark}
	Computing the conditional covariance $\Cov[U|V=v]$ for various $v\in\cH$ is often too costly, in which case one can focus on its mean $\bE[\Cov[U|V]]$.
	The above statements show that the average LCC $\bE\bigl[\Cov^{\ABLin}[U|V]\bigr]
	=
	\Cov^{\ABLin}_{V}[U]$,
	which can be computed by the Gaussian conditional covariance formula, never underestimates the true expected conditional covariance $\bE\bigl[\Cov[U|V]\bigr]$.
\end{remark}

As a consequence, we obtain an explicit formula for the LCC $\Cov^{\ABLin}[U,W|V]$:


\begin{corollary}[Formula for the LCC]
	\label{corollary:FormulaForLCC}
	With the notation of \Cref{section:GeneralSetupAndNotation}, let
	\[
		Z \defeq R^{\ABLin}[U|V] \otimes R^{\ABLin}[W|V] \colon \Omega\to \HS(\cF;\cG) .
	\]
	Then, using \Cref{notation:NotationForIncompatibleLCM},
	\[
		\Cov^{\ABLin}[U,W|V]
		=
		C_{UW} - M_{VU}^{\ast} M_{VW} +
		\lim_{n\to\infty} \bigl(C_{V}^{(n)\dagger}C_{VZ}^{(n)}\bigr)^{\ast} (V-\mu_{V}) \text{ a.s.},
	\]
	where the limit is in the Hilbert--Schmidt norm and
	\begin{align*}
		\mu_{Z}
		&=
		C_{UW} - M_{VU}^{\ast} M_{VW},
		\\
		C_{VZ}
		&=
		\bE[V\otimes (U-\bE^{\ABLin}[U|V]) \otimes (W-\bE^{\ABLin}[W|V])]
		- \mu_{V}\otimes \mu_{Z}.
	\end{align*}
	In particular, in the compatible case with $\ran C_{VW} \subseteq \ran C_{V}$ and $\ran C_{VZ} \subseteq \ran C_{V}$,
	\[
		\Cov^{\ABLin}[U,W|V]
		=
		C_{UW} - C_{UV} C_{V}^{\dagger} C_{VW} +
		(C_{V}^{\dagger}C_{VZ})^{\ast} (V-\mu_{V}) \text{ a.s.}
	\]
\end{corollary}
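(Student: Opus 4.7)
The plan is to recognise that, by definition, $\Cov^{\ABLin}[U,W|V] = \bE^{\ABLin}[Z|V]$ is nothing but the linear conditional expectation of the Hilbert--Schmidt-valued random variable $Z$ given $V$. Since $\HS(\cF;\cG)$ is itself a separable real Hilbert space, the entire machinery of \Cref{section:LinearConditionalMean} applies verbatim with $\cG$ replaced by $\HS(\cF;\cG)$ and with $Z$ in place of $U$. The whole corollary is therefore a direct specialisation of \Cref{theorem:LinearConditionalMeanIncompatibleCase} to the pair $(Z,V)$, with \Cref{theorem:LinearConditionalMeanUnderRangeAssumption} handling the compatible case.

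The only work needed is to identify the two ingredients $\mu_{Z}$ and $C_{VZ}$ that appear in \Cref{notation:NotationForIncompatibleLCM}. For $\mu_{Z}$, the definition of $Z$ and of the average linear conditional covariance yield
\[
\mu_{Z} = \bE\bigl[ R^{\ABLin}[U|V] \otimes R^{\ABLin}[W|V] \bigr] = \Cov^{\ABLin}_{V}[U,W],
\]
which by \Cref{thm:PropertiesLinearConditionalCovariance}\ref{item:GaussianFormulaForConditionalCovariance} equals $C_{UW} - M_{VU}^{\ast} M_{VW}$. For $C_{VZ}$, the claimed expression is the definition \eqref{equ:DefinitionCovarianceOperators} applied to $V$ and $Z$ after substituting $Z = (U - \bE^{\ABLin}[U|V]) \otimes (W - \bE^{\ABLin}[W|V])$. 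Plugging these into the formula supplied by \Cref{theorem:LinearConditionalMeanIncompatibleCase},
\[
\bE^{\ABLin}[Z|V]
=
\lim_{n\to\infty}\Bigl( \mu_{Z} + \bigl(C_{V}^{(n)\dagger}C_{VZ}^{(n)}\bigr)^{\ast}(V-\mu_{V}) \Bigr)
\qquad
\text{a.s.\ and in } L^{2}(\bP;\HS(\cF;\cG)),
\]
gives exactly the stated limiting formula, with the Hilbert--Schmidt convergence being the norm convergence on the target space. The compatible subcase $\ran C_{VW}\subseteq \ran C_{V}$ and $\ran C_{VZ}\subseteq \ran C_{V}$ drops out analogously from \Cref{theorem:LinearConditionalMeanUnderRangeAssumption}, yielding the limit-free closed form.

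The main (and essentially only) obstacle I foresee is an integrability check: in order to invoke the LCE machinery for $Z$ one needs $Z \in L^{2}(\bP;\HS(\cF;\cG))$, i.e.\ $\bE\bigl[\|R^{\ABLin}[U|V]\|_{\cG}^{2}\,\|R^{\ABLin}[W|V]\|_{\cF}^{2}\bigr] < \infty$, which is not automatic from the standing $L^{2}$ assumptions on $U$ and $W$ and should be imposed by a suitable fourth-moment condition (or deduced from one implicit in the statement). Everything beyond this verification is bookkeeping: once $Z$ legitimately lives in $L^{2}(\bP;\HS(\cF;\cG))$, the corollary is a clean translation of \Cref{theorem:LinearConditionalMeanUnderRangeAssumption,theorem:LinearConditionalMeanIncompatibleCase,thm:PropertiesLinearConditionalCovariance} into the language of the outer-product random variable $Z$.
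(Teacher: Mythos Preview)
Your proposal is correct and follows essentially the same approach as the paper: the paper's own proof simply notes that $\mu_{Z} = \Cov^{\ABLin}_{V}[U,W]$ and then invokes \Cref{theorem:LinearConditionalMeanUnderRangeAssumption,theorem:LinearConditionalMeanIncompatibleCase,thm:PropertiesLinearConditionalCovariance}, which is exactly the reduction you describe. Your observation about the need for $Z \in L^{2}(\bP;\HS(\cF;\cG))$ is a valid point that the paper leaves implicit.
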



\proofinappendix


\section{Application to Kernel Conditional Mean Embeddings}
\label{section:ApplicationToCMEsAlternative}

The above results have a beautiful application to the derivation of conditional mean embeddings (CMEs), a concept used in machine learning to perform conditioning of random variables after embedding them into suitable reproducing kernel Hilbert spaces (RKHSs).
To this end, let $\cH$ and $\cG$ be two RKHSs over measurable spaces $\cX$ and $\cY$ respectively, with reproducing kernels $k$ and $\ell$ and canonical feature maps $\varphi(x) \defeq k(x,\quark)$ and $\psi(y) \defeq \ell(y,\quark)$.

For two random variables $X\colon\Omega\to\cX$ and $Y\colon\Omega\to\cY$ with joint distribution $\bP_{XY}$ and corresponding marginal distributions $\bP_X$ and $\bP_Y$ such that $V \defeq \varphi(X) \in L^2(\Omega,\Sigma,\bP;\cH)$ and $U \defeq \psi(Y) \in L^2(\Omega,\Sigma,\bP;\cG)$, respectively, the CME $\bE[U|X]$ can be characterised by the linear-algebraic transformation
\begin{align}
	\label{equ:CMEFormulaCentered}
	\bE[U|X]
	&=
	\mu_U + (C_{V}^{\dagger} C_{VU})^\ast \, (\varphi(X) - \mu_{V}),
\end{align}
which holds under appropriate technical assumptions \citep{klebanov2019rigorous}.
Formula \eqref{equ:CMEFormulaCentered} can be interpreted as saying that application of the K\'alm\'an update or BLUE formulae to RKHS embeddings of random variables realises the embedding of conditional distributions.

The theory on (affine) linear conditional means from \Cref{section:LinearConditionalMean} provides an alternative proof and a more insightful and explanatory derivation of \eqref{equ:CMEFormulaCentered}.
The main idea is to find conditions under which the conditional expectation $\bE[U|V]$ agrees with the linear conditional expectation $\bE^{\ABLin}[U|V]$.
Assuming $\varphi$ to be injective implies that $\bE[U|X] = \bE[U|V]$ and \eqref{equ:CMEFormulaCentered} then follows directly from \Cref{theorem:LinearConditionalMeanUnderRangeAssumption}, while \Cref{theorem:LinearConditionalMeanIncompatibleCase} implies the more generally applicable formula in \Cref{thm:CMEunderBstar}.
The reason why one could hope for $\bE[U|V] = \bE^{\ABLin}[U|V]$ to hold is the celebrated \emph{kernel trick}, the guiding theme of RKHS-based methods:
many nonlinear problems in the original spaces $\cX$ and $\cY$ (here, conditioning) become linear-algebraic problems when embedded into the corresponding RKHSs $\cH$ and $\cG$.

\subsection{Setup and Notation}
\label{section:CMEAdditionalSetupAndNotation}

Here, with apologies for the large notational overhead relative to the brevity of the results in \Cref{section:DerivationCME}, we introduce the precise technical assumptions and notation needed for the validity of the CME approach;
see \citet[Section 2]{klebanov2019rigorous} for a detailed exposition.

Regarding the kernel mean embedding of random variables $X\colon\Omega\to\cX$ and $Y\colon\Omega\to\cY$ into RKHSs $\cH$ and $\cG$ over $\cX$ and $\cY$, respectively, with reproducing kernels $k$ and $\ell$ via the canonical feature maps $\varphi(x) \defeq k(x,\quark)$ and $\psi(y) \defeq \ell(y,\quark)$ we make the following basic assumptions:

\begin{assumption}
	\label{assumption:CME}
	\begin{enumerate}[label=(\alph*)]
		\item \label{assumpitem:CMEsets_spaces}
		The space $\cX$ is a measurable space and $\cY$ is a Borel space.
		\item \label{assumpitem:CMEsets_kernels_RKHS_featuremaps}
		The kernel functions $k \colon \cX \times \cX \to \bR$ and $\ell \colon \cY \times \cY \to \bR$ are symmetric positive definite and measurable and the corresponding RKHSs $(\cH,\langle \quark,\quark\rangle_{\cH})$ and $(\cG,\langle \quark,\quark\rangle_{\cG})$ are separable.
		Moreover, the canonical feature map $\varphi(x) \defeq k(x,\quark)$ is injective.\footnote{The injectivity of $\varphi$ was not required for the derivations in \citet{klebanov2019rigorous}. However, it represents a minor restriction since one typically considers characteristic kernels, which implies injectivity of $\varphi$.}
		\item \label{assumpitem:CMErandomVariablesAndSpaces}
		The random variables $V \defeq \varphi(X)$ and $U \defeq \psi(Y)$ lie in $ L^2(\Omega,\Sigma,\bP;\cG)$ and $L^2(\Omega,\Sigma,\bP;\cH)$, respectively.
		\item \label{assumpitem:NoNontrivialZerosInH}
		For any $h\in\cH$ we have $\|h\|_{\cH} = 0$ if and only if $h=0$ $\bP_X$-a.e.\ in $\cX$.
	\end{enumerate}
\end{assumption}

By \citet[Theorem~5.3]{kallenberg2006foundations}, \Cref{assumption:CME}\ref{assumpitem:CMEsets_spaces} ensures the existence of a $\bP_X$-a.e.-unique regular version of the conditional probability distribution $\bP_{Y|X=x}$, $x\in\cX$, for random variables $X\colon \Omega\to\cX$ and $Y\colon\Omega\to\cY$.
Moreover, by \citet[Lemma~4.25]{steinwart2008support}, \Cref{assumption:CME}\ref{assumpitem:CMEsets_kernels_RKHS_featuremaps} guarantees the measurability of $\varphi\colon \cX \to \cH$ and $\psi\colon \cY \to \cG$, respectively.
\Cref{assumption:CME}\ref{assumpitem:CMErandomVariablesAndSpaces} implies that $\cH$ (resp.\ $\cG$) is continuously embedded in the pre-Hilbert space $\cL^2(\bP_X)$ (resp.\ $\cL^2(\bP_Y)$).
Furthermore, it also follows that $\bE[\norm{\psi(Y)}_{\cG}^{2}|X=x] < \infty$ and that $\cG$ is continuously embedded in $\cL^2(\bP_{Y|X=x})$ for all $x\in\cX_{Y}$, where $\cX_{Y}\subseteq \cX$ has full $\bP_{X}$ measure, see \citet[Section 2]{klebanov2019rigorous}.
\Cref{assumption:CME}\ref{assumpitem:NoNontrivialZerosInH} clearly holds if $k$ is continuous and $\supp(\bP_X) = \cX$.
In particular, it allows us to view $\cH$ as a subspace of the Lebesgue Hilbert space $L^2(\bP_X)$.

Subsequently, we work with Bochner spaces $L^{2}(\bP_{X};\cF)$ where $\cF$ denotes another separable Hilbert space (which in our case will be equal to either $\bR$ or $\cG$).
Recall that the space $L^{2}(\bP_{X};\cF)$ is isometrically isomorphic to the Hilbert tensor product space $\cF \otimes L^{2}(\bP_{X})$.
We comment on the various perspectives on tensor product space exploited in our proofs in Remark \ref{remark:ViewTensorProductAsHS} below.
For stating our second main result, we require the following definitions.

\begin{notation}\label{notation:CME}
	\begin{enumerate}[label=(\alph*)]
		\item \label{item:L2andL2C}
		Given a separable Hilbert space $\cF$ we define $L_{\cC}^{2}(\bP_{X}; \cF)$ to be the quotient space $L^{2}(\bP_{X}; \cF) / \cC$, where
		\begin{align*}
			\cC
			\defeq
			\{ f\in L^{2}(\bP_{X}; \cF) \mid \exists c\in \cF\colon f(x) = c \text{ for $\bP_{X}$-a.e.\ } x\in\cX \},
			\\[0.6ex]
			\innerprod{ [f_1] }{ [f_2] }_{L_{\cC}^{2}(\bP_{X}; \cF)}
			\defeq \innerprod{ f_1 - \bE[f_1(X)] }{ f_2 - \bE[f_2(X)] }_{L^{2}(\bP_{X}; \cF)}.
		\end{align*}
		In the case $\cF = \bR$, we abbreviate the space $L_{\cC}^{2}(\bP_{X}; \bR)$ by $L_\cC^2(\bP_{X})$ and for any subspace $\cU\subseteq L^{2}(\bP_{X};\cF)$ we define $\cU_{\cC} \defeq \cU / (\cU \cap \cC)$ and identify it with a subspace of $L_{\cC}^{2}(\bP_{X}; \cF)$.

		\item
		\label{assumpitem:ConditionalMeanInL2G}
		Furthermore, we define the main object of our interest, the conditional mean
		\[
			\fm\colon\cX\to\cG,
			\qquad
			\fm(x)
			\defeq
			\begin{cases}
				\bE[U|X=x], & \text{for } x\in\cX_{Y},\\
				0, & \text{otherwise.}
			\end{cases}
		\]
		Note that $\fm\in L^{2}(\bP_{X};\cG)$, since Jensen's inequality, the law of total expectation, and \Cref{assumption:CME}\ref{assumpitem:CMErandomVariablesAndSpaces} together yield that
		\[
			\norm{\fm}_{L^{2}(\bP_{X};\cG)}^{2}
			=
			\bE\bigl[\norm{\bE[\psi(Y)|X]}_{\cG}^{2}\bigr]
			\le
			\bE\bigl[\bE[\norm{U}_{\cG}^{2}|X]\bigr]
			=
			\bE\bigl[\norm{U}_{\cG}^{2}\bigr]
			<
			\infty.
		\]

		\item
		\label{assumpitem:FG}
		We also introduce the notation
		\[
			f_g(x) \defeq \bE[g(Y)| X=x] = \innerprod{g}{\fm(x)}_{\cG},
		\]
		mainly for the comparison of our formulations to \cite{klebanov2019rigorous}.
	\end{enumerate}
\end{notation}

\begin{remark}
	\label{remark:ViewTensorProductAsHS}
	For $\cF = \cH$ and $\cF = L^2(\bP_{X})$, we will view the Hilbert tensor product space $\cG\otimes\cF$ from three\footnote{In fact, there is another viewpoint on $\cG\otimes\cF$, namely as a set of functions from $Y\times X$ to $\bR$, where $(g\otimes f) (y,x) \defeq g(y)f(x)$, in which case $\cG\otimes\cF \subseteq L^2(\bP_{Y} \otimes \bP_{X})$.} perspectives:
	\begin{itemize}
		\item
		$\cG\otimes\cF$ is isometrically isomorphic to the space $\HS(\cF;\cG)$ of Hilbert--Schmidt operators from $\cF$ to $\cG$ \citep[Chapter 12]{aubin2000} and we sometimes view $g\otimes f \in \cG\otimes\cF \cong \HS(\cF;\cG)$ as the corresponding mapping from $\cF$ to $\cG$ given by
		\[
				[g\otimes f]_{\cF\to\cG}(f') \defeq\innerprod{f}{f'}_{\cF}\, g.
		\]
		\item
		$\cG\otimes\cF$ can be viewed as a set of functions from $\cX$ to $\cG$ \citep[Chapter 12]{aubin2000}.
		Thus, we can view $g\otimes f \in \cG\otimes\cF \subseteq L^2(\bP_{X};\cG)$ accordingly as a function on $\cX$ taking values in $\cG$:
		\[
		[g\otimes f]_{\cX\to\cG}(x) \defeq f(x)\, g
		=
		[g\otimes f]_{\cH\to\cG}(\varphi(x)),
		\]
		where the last equality holds for $\cF = \cH$ by the reproducing property, in which case we have established the important observation
		\begin{equation}
		\label{equ:IdentityFromViewingOtimesAsHS}
		\ff_{\cX\to\cG}(x) = \ff_{\cH\to\cG}(\varphi(x)),
		\qquad
		\ff \in \cG\otimes\cH,
		\
		x\in\cX.
		\end{equation}
		Note that $\cG\otimes\cF$ is indeed a subspace of $L^2(\bP_{X};\cG)$, which is obvious in the case $\cF = L^2(\bP_{X})$, and, in the case $\cF = \cH$, follows from
		\[
		\bignorm{\ff_{\cX\to\cG}}_{L^{2}(\bP_{X};\cG)}^{2}
		=
		\bE\bigl[\bignorm{\ff_{\cX\to\cG}(X)}_{\cG}^{2}\bigr]
		\stackrel{\eqref{equ:IdentityFromViewingOtimesAsHS}}{\le}
		\bignorm{\ff_{\cH\to\cG}}_{\HS}^{2}\, \bE\bigl[ \bignorm{\varphi(X)}_{\cH}^{2}\bigr]
		<
		\infty,
		\]
		where we used \Cref{assumption:CME}\ref{assumpitem:CMErandomVariablesAndSpaces} in the last step.
		This is hardly surprising, since $\cH \subseteq L^2(\bP_{X})$ by \Cref{assumption:CME}\ref{assumpitem:CMErandomVariablesAndSpaces},\ref{assumpitem:NoNontrivialZerosInH}, but not trivial as we use the RKHS norm $\norm{\quark}_{\cH}$ in the construction of $\cG\otimes \cH$, which may not agree with $\norm{\quark}_{L^2(\bP_{X})}$.
		\item
		Since tensor products are commutative up to isometric isomorphism, $\cG\otimes\cF$ is also isometrically isomorphic to $\HS(\cG;\cF)$ and we can analogously set
		\[
		[g \otimes f]_{\cG \to \cF}(g') \defeq \innerprod{g}{g'}_{\cF}\, f, \qquad f \in \cF, \ g,\, g' \in \cG.
		\]
		We will sometimes use the resulting identities for arbitrary $\ff \in \cG\otimes\cF$: with $x\in\cX$
		\begin{equation}
		\label{equ:SomeTensorProductIdentities}
		\ff_{\cG\to\cF}(g)(x)
		=
		\innerprod{\ff_{\cX\to\cG}(x)}{g}_{\cG},
		\qquad
		\innerprod{\ff}{g\otimes f}_{\cG\otimes\cF}
		=
		\innerprod{\ff_{\cG\to\cF}(g)}{f}_{\cF}.
		\end{equation}
	\end{itemize}
	However, we will drop the indices $\cF\to\cG$, $\cX\to\cG$ and $\cG\to\cF$ in the following, since it will always be clear which version we mean, whenever we apply $\ff\in \cG\otimes\cF$ to some element of $\cF$, $\cX$, or $\cG$, respectively.
\end{remark}

The typical assumption for CMEs is that the functions $f_g$ introduced in \Cref{notation:CME}\ref{assumpitem:FG} must lie in $\cH$ for all $g\in\cG$.
\citet{klebanov2019rigorous} discuss several weaker assumptions on $f_g$, which we are going to adopt in this paper.
However, the main purpose of using $f_g$ is that, for $g=\psi(y)$ with $y\in\cY$, $f_{\psi(y)} = \mu_{Y|X = \quark}(y)$ and, in fact, all results in \citet{klebanov2019rigorous} rely solely on these special cases of $f_g$.
It is therefore meaningful to restate these assumptions in terms of $\fm$ rather than $f_g$.

\begin{assumption}
	\label{assumption:AssumptionHierarchyCMEalmosteverywhere}
	Under \Cref{assumption:CME} and using \Cref{notation:CME}, we introduce the following assumptions on the functions $\fm \in L^{2}(\bP_{X};\cG)$:
	\begin{itemize}
		\item[\mylabel{assump:strongCME}{($\text{A}$)}]
		$\fm \in \cG\otimes \cH$;
		\item[\mylabel{assump:weakerCME}{($\text{B}$)}]
		$[\fm] \in (\cG\otimes \cH)_{\cC}$;
		\item[\mylabel{assump:weakCME}{($\text{C}$)}]
		$\displaystyle P_{\overline{(\cG\otimes \cH)_{\cC}}^{L_{\cC}^{2}(\bP_{X};\cG)}} [\fm] \in (\cG\otimes \cH)_{\cC}$;
		\item[\mylabel{assump:weakCMEuncentred}{($\uu{\text{C}}$)}]
		$\displaystyle P_{\overline{\cG\otimes \cH}^{L^{2}(\bP_{X};\cG)}} \fm \in \cG\otimes \cH$;
		\item[\mylabel{assump:strongCMElimit}{($\text{A}^{\ast}$)}]
		$\fm \in \overline{\cG\otimes \cH}^{L^{2}(\bP_{X};\cG)}$;
		\item[\mylabel{assump:weakerCMElimit}{($\text{B}^{\ast}$)}]
		$[\fm] \in \overline{(\cG\otimes \cH)_{\cC}}^{L_{\cC}^{2}(\bP_{X};\cG)}$.
	\end{itemize}
\end{assumption}

\begin{remark}
	\label{remark:UniversalityImpliesAstar}
	Note that these assumptions are slightly stronger than the corresponding assumptions on $f_g$ in \cite[Section 3]{klebanov2019rigorous}.
	The corresponding implications are formulated in \Cref{proposition:AssumptionHierarchyCMEimpliesOldVersion} below.
	However, by \Cref{lemma:CharacteristicImpliesBstar}, \ref{assump:weakerCMElimit} still follows from $k$ being characteristic, providing a verifiable condition for the applicability of the corresponding CME formula (\Cref{thm:CMEunderBstar}).
	Also, as before, \ref{assump:strongCMElimit} follows from $k$ being $L^{2}$-universal.
	Indeed, if $\cG\otimes \cH$ is dense in $L^2(\bP_{X};\cG)$, then
	\[
		\overline{\cG\otimes \cH}^{L^{2}(\bP_{X};\cG)}
		\subseteq
		\overline{\cG \otimes \overline{\cH}^{L^{2}(\bP_{X})}}^{\cG \otimes L^{2}(\bP_{X})}
		=
		\overline{\cG \otimes L^{2}(\bP_{X})}^{\cG \otimes L^{2}(\bP_{X})}
		=
		L^{2}(\bP_{X}; \cG).
	\]
	These and further relations among the conditions in \Cref{assumption:AssumptionHierarchyCMEalmosteverywhere} are summarised in \Cref{fig:HierarchyOfAssumptions}.
\end{remark}


\begin{proposition}
	\label{proposition:AssumptionHierarchyCMEimpliesOldVersion}
	The conditions in \Cref{assumption:AssumptionHierarchyCMEalmosteverywhere} imply the corresponding assumptions on $f_g$ in \citet[Section 3]{klebanov2019rigorous} (here marked with a subscript ``old'').
	More precisely, under \Cref{assumption:CME} and using \Cref{notation:CME},
	\begin{itemize}[leftmargin=10em]
		\item[\mylabel{assump:strongCMEold}{($\text{A}_{\textup{old}}$)}]
		\makebox[0pt][r]{\ref{assump:strongCME}$\quad\Longrightarrow$ \hspace{4em}}		
		$f_g \in \cH$ for each $g\in\cG$;
		\item[\mylabel{assump:weakerCMEold}{($\text{B}_{\textup{old}}$)}]
		\makebox[0pt][r]{\ref{assump:weakerCME}$\quad\Longrightarrow$ \hspace{4em}}	
		$[f_g] \in \cH_{\cC}$ for each $g\in\cG$;
		\item[\mylabel{assump:weakCMEold}{($\text{C}_{\textup{old}}$)}]
		\makebox[0pt][r]{\ref{assump:weakCME}$\quad\Longrightarrow$ \hspace{4em}}	
		$\displaystyle P_{\overline{\cH_{\cC}}^{L_{\cC}^{2}(\bP_{X})}} [f_g] \in \cH_{\cC}$ for each $g\in\cG$;
		\item[\mylabel{assump:weakCMEuncentredold}{($\uu{\text{C}}_{\textup{old}}$)}]
		\makebox[0pt][r]{\ref{assump:weakCMEuncentred}$\quad\Longrightarrow$ \hspace{4em}}	
		$\displaystyle P_{\overline{\cH}^{L^{2}(\bP_{X})}} f_g \in \cH$ for each $g\in\cG$;
		\item[\mylabel{assump:strongCMElimitold}{($\text{A}_{\textup{old}}^{\ast}$)}]
		\makebox[0pt][r]{\ref{assump:strongCMElimit}$\quad\Longrightarrow$ \hspace{4em}}	
		$f_g \in \overline{\cH}^{L^{2}(\bP_{X})}$ for each $g\in\cG$;
		\item[\mylabel{assump:weakerCMElimitold}{($\text{B}_{\textup{old}}^{\ast}$)}]
		\makebox[0pt][r]{\ref{assump:weakerCMElimit}$\quad\Longrightarrow$ \hspace{4em}}	
		$f_g \in \overline{(\cH)_{\cC}}^{L_{\cC}^{2}(\bP_{X})}$ for each $g\in\cG$.
	\end{itemize}
	Further,	
	\begin{itemize}[leftmargin=7em]
		\item[]
		\makebox[0pt][r]{\ref{assump:weakCME}$\quad\Longrightarrow$ \hspace{1em}}		
		$\ran C_{VU}\subseteq \ran C_{V}$;
		\item[]
		\makebox[0pt][r]{\ref{assump:weakCMEuncentred}$\quad\Longrightarrow$ \hspace{1em}}	
		$\ran \uu C_{VU}\subseteq \ran \uu C_{V}$.		
	\end{itemize}	
\end{proposition}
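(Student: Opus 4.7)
The plan is to package all six ``$\textup{old}$'' implications through one device --- the bounded evaluation map
\[
	\pi_g\colon L^{2}(\bP_{X};\cG) \to L^{2}(\bP_{X}),
	\qquad
	\pi_g(\ff) \defeq \innerprod{g}{\ff(\quark)}_{\cG},
	\qquad
	\norm{\pi_g} \leq \norm{g}_{\cG},
\]
together with its descent $\bar{\pi}_g \colon L^{2}_{\cC}(\bP_{X};\cG) \to L^{2}_{\cC}(\bP_{X})$, which is well defined because $\pi_g$ sends constant $\cG$-valued functions to constant scalar-valued ones. The key identity --- built into \Cref{remark:ViewTensorProductAsHS} --- is that $\pi_g$ maps $\cG \otimes \cH$ into $\cH$ with $\pi_g(\ff) = \ff_{\cG\to\cH}(g)$, and in particular $\pi_g(\fm) = f_g$ $\bP_{X}$-a.e.\ by \Cref{notation:CME}\ref{assumpitem:FG}.

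First I would dispatch the three ``static'' implications \ref{assump:strongCME}, \ref{assump:weakerCME}, \ref{assump:weakCMEuncentred} by direct application of $\pi_g$ or $\bar{\pi}_g$ to $\fm$ or $[\fm]$, and the two ``limit'' implications \ref{assump:strongCMElimit}, \ref{assump:weakerCMElimit} by invoking boundedness of $\pi_g$ and $\bar{\pi}_g$, which preserves closures. The interesting one is \ref{assump:weakCME} $\Rightarrow$ \ref{assump:weakCMEold}, which requires the intertwining
\[
	\bar{\pi}_g \circ P_{\overline{(\cG\otimes\cH)_{\cC}}^{L^{2}_{\cC}(\bP_{X};\cG)}}
	=
	P_{\overline{\cH_{\cC}}^{L^{2}_{\cC}(\bP_{X})}} \circ \bar{\pi}_g.
\]
My plan for this is to use the canonical isometric isomorphism $L^{2}_{\cC}(\bP_{X};\cG) \cong \cG \otimes L^{2}_{\cC}(\bP_{X})$, under which $(\cG\otimes\cH)_{\cC}$ corresponds to $\cG \otimes \cH_{\cC}$, and the orthogonal projection onto the closure of the latter factors as $\Id_{\cG} \otimes P_{\overline{\cH_{\cC}}}$ --- a standard identity for projections onto $\cK_1 \otimes M \subseteq \cK_1 \otimes \cK_2$. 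Since $\bar{\pi}_g$ acts as $\innerprod{g}{\quark}_{\cG} \otimes \Id_{L^{2}_{\cC}(\bP_{X})}$ on the tensor-product side, it commutes with this projection, giving the implication.

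For the two range inclusions, under \ref{assump:weakCME} I would write $[\mathfrak{p}] \defeq P_{\overline{(\cG\otimes\cH)_{\cC}}}[\fm]$ and pick a representative $\mathfrak{p}\in\cG\otimes\cH$; for each $g\in\cG$, set $p_g \defeq \mathfrak{p}_{\cG\to\cH}(g)\in\cH$. Testing the orthogonality of $[\fm - \mathfrak{p}]$ against elements $g\otimes h \in (\cG\otimes\cH)_{\cC}$ in $L^{2}_{\cC}(\bP_{X};\cG)$, and simplifying via the tower identity $\bE[f_g(X)\,h(X)] = \bE[g(Y)\,h(X)]$ together with the reproducing property of $\cH$ and $\cG$, should yield
\[
	\innerprod{C_{V}p_g - C_{VU}g}{h}_{\cH} = 0
	\quad\text{for every } h\in\cH,
\]
hence $C_{VU}g = C_{V}p_g \in \ran C_{V}$; since $g$ is arbitrary, $\ran C_{VU}\subseteq \ran C_{V}$. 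The uncentred analogue \ref{assump:weakCMEuncentred} $\Rightarrow$ $\ran \uu{C}_{VU} \subseteq \ran \uu{C}_{V}$ proceeds identically with covariances replaced by uncentred second moments and no quotient-by-constants bookkeeping.

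The main obstacle will be carefully identifying $(\cG\otimes\cH)_{\cC}$ inside $\cG \otimes L^{2}_{\cC}(\bP_{X})$ with $\cG \otimes \cH_{\cC}$ --- i.e.\ checking that modding out by constants commutes with the Hilbert tensor product, so that the factorisation $P_{\overline{(\cG\otimes\cH)_{\cC}}} = \Id_{\cG} \otimes P_{\overline{\cH_{\cC}}}$ is legitimate. One must verify in particular that the constant-valued elements of $\cG\otimes\cH \subseteq L^{2}(\bP_{X};\cG)$ are exactly those of the form $\sum_i g_i \otimes c_i$ with $c_i \in \cH$ constant $\bP_{X}$-a.e. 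Once this identification and the standard projection-onto-tensor-subspace formula are in hand, everything else is a routine application of \Cref{remark:ViewTensorProductAsHS} and the reproducing property.
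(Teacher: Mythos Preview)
Your proposal is correct and shares its core device with the paper --- both use the evaluation $\ff \mapsto \ff(g)$ (the paper phrases this by viewing $\fm \in \HS(\cG;L^{2}(\bP_X))$ so that $f_g = \fm(g)$, which is exactly your $\pi_g$) --- but there are two genuine differences in route. First, for \ref{assump:weakCME}$\Rightarrow$\ref{assump:weakCMEold} the paper avoids your identification $(\cG\otimes\cH)_{\cC} \cong \cG\otimes\cH_{\cC}$ and the factorisation $P_{\overline{(\cG\otimes\cH)_{\cC}}} = \Id_{\cG}\otimes P_{\overline{\cH_{\cC}}}$ entirely: it simply tests the orthogonality $\innerprod{[\fh]-[\fm]}{[g\otimes h]}_{L^2_{\cC}(\bP_X;\cG)} = 0$ directly, unwinds it via \eqref{equ:SomeTensorProductIdentities} to $\innerprod{[\fh(g)]-[f_g]}{[h]}_{L^2_{\cC}(\bP_X)} = 0$, and concludes that $[\fh(g)] = P_{\overline{\cH_{\cC}}}[f_g]$. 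This sidesteps the ``main obstacle'' you flag, at the cost of being less structural. Second, for the two range inclusions the paper does \emph{not} argue directly as you do: it first establishes the ``old'' implications and then cites \citet[Theorems~4.1 and~5.1]{klebanov2019rigorous}. Your direct argument (testing against $g\otimes h$, unwinding via the tower property and reproducing property to obtain $C_{VU}g = C_V p_g$) is self-contained and arguably cleaner than deferring to an external reference.
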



\proofinappendix


\begin{lemma}
	\label{lemma:CharacteristicImpliesBstar}
	Under \Cref{assumption:CME} and using \Cref{notation:CME}, if $k$ is a characteristic kernel, then $(\cG\otimes \cH)_{\cC}$ is dense in $L_{\cC}^{2}(\bP_{X}; \cG)$ and \Cref{assump:weakerCMElimit}\ref{assump:weakerCMElimit} is satisfied.	
\end{lemma}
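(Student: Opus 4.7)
The plan is to first establish the scalar density statement that $\cH_{\cC}$ is dense in $L_{\cC}^{2}(\bP_{X})$, and then bootstrap to the vector-valued case via a tensor-product approximation; the conclusion \ref{assump:weakerCMElimit} is then automatic because $\fm\in L^{2}(\bP_{X};\cG)$.

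For the scalar step, I would reformulate the desired density dually: $\cH_{\cC}$ is dense in $L_{\cC}^{2}(\bP_{X})$ iff the only $f\in L^{2}(\bP_{X})$ with $\bE[f(X)]=0$ and $\bE[f(X)h(X)]=0$ for every $h\in\cH$ is $f=0$ $\bP_{X}$-a.s. Given such an $f$, the signed Borel measure $\rd\nu\defeq f\,\rd\bP_{X}$ is finite (by Cauchy--Schwarz and $\bP_{X}$ being a probability measure), has zero total mass, and its kernel mean embedding vanishes: using \Cref{assumption:CME}\ref{assumpitem:CMErandomVariablesAndSpaces} to guarantee Bochner integrability of $f(X)\varphi(X)$ in $\cH$,
\[
\int k(x,y)\,\rd\nu(y)=\bE[f(X)k(x,X)]=\Innerprod{k(x,\quark)}{\bE[f(X)\varphi(X)]}_{\cH}=0
\qquad\text{for all } x\in\cX,
\]
since $\bE[f(X)\varphi(X)]\in\cH$ is orthogonal to every reproducing kernel section $k(x,\quark)$. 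The characteristic property of $k$ then forces $\nu=0$, i.e.\ $f=0$ $\bP_{X}$-a.s., which proves density of $\cH_{\cC}$ in $L_{\cC}^{2}(\bP_{X})$.

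For the tensor-product step, I would use the isometric isomorphism $L^{2}(\bP_{X};\cG)\cong\cG\otimes L^{2}(\bP_{X})$. Given $f\in L^{2}(\bP_{X};\cG)$ and $\varepsilon>0$, first approximate $f$ in $L^{2}$-norm to within $\varepsilon$ by a finite sum $\sum_{i=1}^{n}g_{i}\otimes\phi_{i}$ with $g_{i}\in\cG$ and $\phi_{i}\in L^{2}(\bP_{X})$. By the scalar density, for each $i$ pick $h_{i}\in\cH$ and $c_{i}\in\bR$ with $\|\phi_{i}-h_{i}-c_{i}\|_{L^{2}(\bP_{X})}<\varepsilon/(n(1+\|g_{i}\|_{\cG}))$. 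Setting $\ff\defeq\sum_{i}g_{i}\otimes h_{i}\in\cG\otimes\cH$ and $c\defeq\sum_{i}c_{i}g_{i}\in\cC$, the triangle inequality gives
\[
\|f-\ff-c\|_{L^{2}(\bP_{X};\cG)}\le \varepsilon+\sum_{i=1}^{n}\|g_{i}\|_{\cG}\,\|\phi_{i}-h_{i}-c_{i}\|_{L^{2}(\bP_{X})}<2\varepsilon,
\]
and passing to the quotient kills the constant $c$, yielding $\|[f]-[\ff]\|_{L_{\cC}^{2}(\bP_{X};\cG)}<2\varepsilon$. Hence $(\cG\otimes\cH)_{\cC}$ is dense in $L_{\cC}^{2}(\bP_{X};\cG)$. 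Since $\fm\in L^{2}(\bP_{X};\cG)$ by \Cref{notation:CME}\ref{assumpitem:ConditionalMeanInL2G}, we have $[\fm]\in L_{\cC}^{2}(\bP_{X};\cG)=\overline{(\cG\otimes\cH)_{\cC}}^{L_{\cC}^{2}(\bP_{X};\cG)}$, which is precisely \ref{assump:weakerCMElimit}.

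The main obstacle is the scalar density step: carefully identifying the right dual characterisation and matching ``characteristic'' with the correct class of finite signed measures (those with total mass zero), so that the measure $\nu=f\cdot\bP_{X}$ produced by an arbitrary element of $(\cH_{\cC})^{\perp}\cap L_{\cC}^{2}(\bP_{X})$ is indeed of a form on which injectivity of the mean embedding applies. The subsequent tensor-product argument is routine.
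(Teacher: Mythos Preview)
Your proof is essentially correct and takes a cleaner, more modular route than the paper. The paper argues directly in the vector-valued setting: assuming $(\cG\otimes\cH)_{\cC}$ is not dense, it takes a nonconstant $\ff\in L^{2}(\bP_{X};\cG)$ with $[\ff]\perp(\cG\otimes\cH)_{\cC}$, picks a direction $g_{\ast}\in\cG$ in the support of $\widetilde\ff=\ff-\bE[\ff(X)]$, and then works with the scalar function $x\mapsto\langle\widetilde\ff(x),g_{\ast}\rangle_{\cG}$ to build two distinct probability measures $Q_{1},Q_{2}$ with the same kernel mean embedding. Your two-step scheme (scalar density $\cH_{\cC}\subset L_{\cC}^{2}(\bP_{X})$, then tensor lift) isolates the analytic content more transparently and makes the r\^ole of the characteristic property easier to see; the paper's choice of $g_{\ast}$ is just an inline way of reducing to your scalar step.

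There is one point you flag as ``the main obstacle'' but do not actually resolve: the sentence ``the characteristic property of $k$ then forces $\nu=0$'' is not immediate, because characteristic means injectivity of the mean embedding on \emph{probability} measures, not on finite signed measures of total mass zero. The missing half-line is the Hahn--Jordan step: write $\nu=\nu_{+}-\nu_{-}$; since $\nu(\cX)=0$ you have $\nu_{+}(\cX)=\nu_{-}(\cX)=:c$, and if $c>0$ then $P_{1}\defeq\nu_{+}/c$ and $P_{2}\defeq\nu_{-}/c$ are distinct probability measures (absolutely continuous with respect to $\bP_{X}$, with embeddings in $\cH$ well defined by Cauchy--Schwarz and \Cref{assumption:CME}\ref{assumpitem:CMErandomVariablesAndSpaces}) sharing the same kernel mean embedding, contradicting the characteristic assumption. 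This is exactly what the paper's explicit construction of $Q_{1}$ and $Q_{2}$ accomplishes, in a slightly different parametrisation. Once you insert this step, your argument is complete; the tensor-product lift is routine and correct as written.
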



\proofinappendix


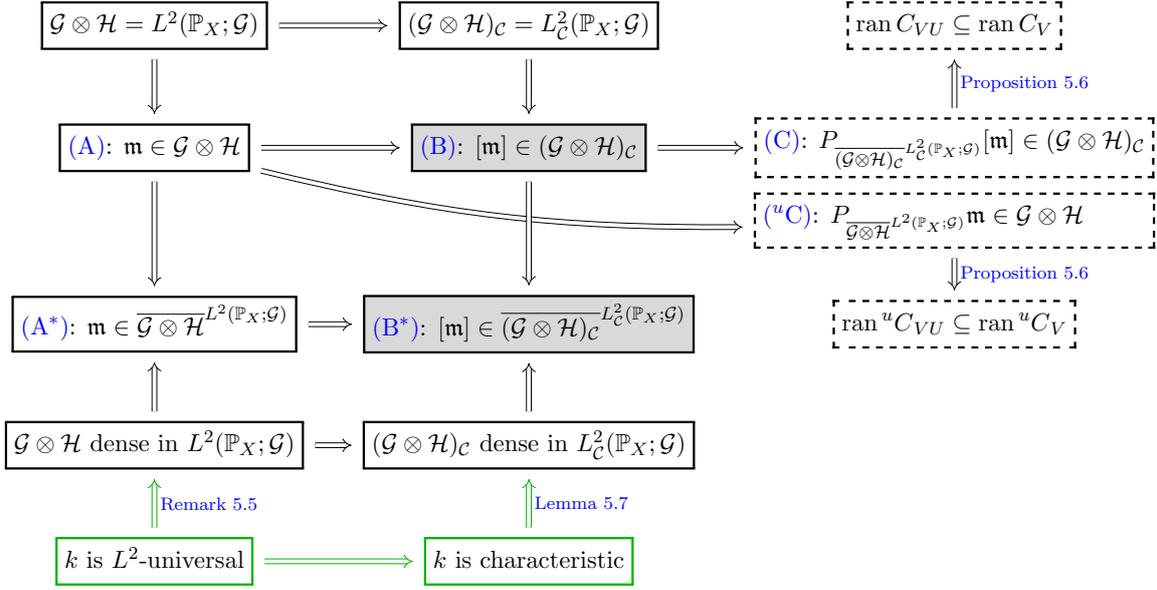
\begin{figure}[t]
	\centering
	\adjustbox{scale=0.85}{
		\begin{tikzcd}[column sep=1.5em,row sep=2em]
			\mlnode{$\cG\otimes \cH = L^2(\bP_{X};\cG)$}
			\arrow[r,Rightarrow]
			\arrow[d,Rightarrow]
			&
			\mlnode{$(\cG\otimes \cH)_{\cC} = L_{\cC}^{2}(\bP_{X};\cG)$}
			\arrow[d,Rightarrow]
			&
			\dashedmlnode{$\ran C_{VU}\subseteq \ran C_{V}$}
			\arrow[d,Leftarrow,"\text{\Cref{proposition:AssumptionHierarchyCMEimpliesOldVersion}}"]
			\\
			\mlnode{\ref{assump:strongCME}: $\fm \in \cG\otimes \cH$}
			\arrow[r,Rightarrow]
			\arrow[dd,Rightarrow]
			\arrow[bend right=7,drr,Rightarrow]
			&
			\filledmlnode{\ref{assump:weakerCME}: $[\fm] \in (\cG\otimes \cH)_{\cC}$}
			\arrow[r,Rightarrow]
			\arrow[dd,Rightarrow]
			&
			\dashedmlnode{\ref{assump:weakCME}: $P_{\overline{(\cG\otimes \cH)_{\cC}}^{L_{\cC}^{2}(\bP_{X};\cG)}} [\fm] \in (\cG\otimes \cH)_{\cC}$}
			\\[-5ex]
			&
			&
			\dashedmlnode{\ref{assump:weakCMEuncentred}: $P_{\overline{\cG\otimes \cH}^{L^{2}(\bP_{X};\cG)}} \fm \in \cG\otimes \cH$\hspace{2.5em}\ }
			\arrow[d,Rightarrow,"\text{\Cref{proposition:AssumptionHierarchyCMEimpliesOldVersion}}"]
			\\[-2ex]
			\mlnode{\ref{assump:strongCMElimit}: $\fm \in \overline{\cG\otimes \cH}^{L^{2}(\bP_{X};\cG)}$}
			\arrow[r,Rightarrow]
			&
			\filledmlnode{\ref{assump:weakerCMElimit}: $[\fm] \in \overline{(\cG\otimes \cH)_{\cC}}^{L_{\cC}^{2}(\bP_{X};\cG)}$}
			\arrow[d,Leftarrow]
			&
			\dashedmlnode{$\ran \uu C_{VU}\subseteq \ran \uu C_{V}$}
			\\
			\mlnode{$\cG\otimes \cH \text{ dense in } L^2(\bP_{X};\cG)$}
			\arrow[r,Rightarrow]
			\arrow[u,Rightarrow]
			\arrow[d,Leftarrow,boxgreen,"\text{\Cref{remark:UniversalityImpliesAstar}}"]
			&
			\mlnode{$(\cG\otimes \cH)_{\cC} \text{ dense in } L_{\cC}^{2}(\bP_{X};\cG)$}
			\arrow[d,Leftarrow,boxgreen,"\text{\Cref{lemma:CharacteristicImpliesBstar}}"]
			&
			\\
			\greenmlnode{$k$ is $ L^2$-universal}
			\arrow[r,Rightarrow,boxgreen]
			&
			\greenmlnode{$k$ is characteristic}
			&
		\end{tikzcd}
	}
	\caption{A hierarchy of CME-related assumptions.
		Sufficient conditions for validity of the CME formula are indicated by solid boxes while the insufficient Assumptions \ref{assump:weakCME} and \ref{assump:weakCMEuncentred}, indicated by dashed boxes, have several strong theoretical implications.
		\Cref{assump:weakerCMElimit}\ref{assump:weakerCMElimit} is the most favorable one, since it is verifiable in practice, and, by \Cref{lemma:CharacteristicImpliesBstar}, in particular is fulfilled if the kernel is universal or even just characteristic (marked in green).
		The shaded boxes correspond to \Cref{thm:CMEunderB,thm:CMEunderBstar}.
	}
	\label{fig:HierarchyOfAssumptions}
\end{figure}

\subsection{Derivation of the CME Formula}
\label{section:DerivationCME}

We are now in a position to re-derive the CME formula under \Cref{assumption:AssumptionHierarchyCMEalmosteverywhere}.


\begin{theorem}[{CME under \Cref{assumption:AssumptionHierarchyCMEalmosteverywhere}\ref{assump:strongCME} or \ref{assump:weakerCME}}]
	\label{thm:CMEunderB}
	Under \Cref{assumption:CME,assumption:AssumptionHierarchyCMEalmosteverywhere}\ref{assump:weakerCME} the operator $C_{V}^{\dagger} C_{VU}\colon \cG\to\cH$ is bounded and
	\begin{equation}
		\label{equ:CentredCMEunderB}
		\bE[U|X]
		=
		\mu_{U} + (C_{V}^{\dagger} C_{VU})^\ast (\varphi(X) - \mu_{V})
		\quad
		\text{a.s.}
	\end{equation}
\end{theorem}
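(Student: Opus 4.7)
The plan is to show that Assumption~\ref{assump:weakerCME} forces $\bE[U|X]$ to already be almost surely an affine function of $V = \varphi(X)$. The stability property of the LCE (Theorem~\ref{theorem:BasicPropertiesLCM}\ref{item:StabilityLCM}) then reduces matters to the compatible-case formula of Theorem~\ref{theorem:LinearConditionalMeanUnderRangeAssumption}. Since \ref{assump:strongCME} implies \ref{assump:weakerCME}, it is enough to argue under \ref{assump:weakerCME}.

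For the linearization step, Assumption~\ref{assump:weakerCME} supplies some $c\in\cG$ and $\ff\in\cG\otimes\cH$ with $\fm(x) = c + \ff(x)$ for $\bP_X$-a.e.\ $x\in\cX$. Reading $\ff$ simultaneously as an element of $L^{2}(\bP_X;\cG)$ and as a Hilbert--Schmidt (hence bounded) operator $A \defeq \ff_{\cH\to\cG} \in \HS(\cH;\cG)$, the identity \eqref{equ:IdentityFromViewingOtimesAsHS} from Remark~\ref{remark:ViewTensorProductAsHS} yields
\[
\bE[U|X] = \fm(X) = c + \ff(\varphi(X)) = c + AV \qquad \bP\text{-a.s.}
\]
Since $\sigma(V)\subseteq\sigma(X)$, the tower property of conditional expectation gives $\bE[U|V] = \bE[\bE[U|X]\mid V] = c + AV$, so that $\bE[U|V]\in\ABLin(\cH;\cG)\circ V \subseteq \overline{\ABLin_{\cG}\circ V}$. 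Stability of the LCE (Theorem~\ref{theorem:BasicPropertiesLCM}\ref{item:StabilityLCM}) then yields $\bE^{\ABLin}[U|V] = \bE[U|V]$.

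To produce the explicit formula, I apply Proposition~\ref{proposition:AssumptionHierarchyCMEimpliesOldVersion}: Assumption~\ref{assump:weakerCME} trivially implies \ref{assump:weakCME} (the projection in \ref{assump:weakCME} leaves elements of the target subspace fixed), and the proposition then gives the compatibility range inclusion $\ran C_{VU}\subseteq \ran C_V$. Theorem~\ref{theorem:LinearConditionalMeanUnderRangeAssumption} now delivers both the boundedness of $C_V^{\dagger} C_{VU}$ and the identity
\[
\bE^{\ABLin}[U|V] = \mu_U + (C_V^{\dagger} C_{VU})^{\ast}(V - \mu_V).
\]
Substituting $V = \varphi(X)$ into the chain $\bE[U|X]=\bE[U|V]=\bE^{\ABLin}[U|V]$ yields \eqref{equ:CentredCMEunderB}.

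The main hurdle is the linearization step: the reproducing property, exploited through \eqref{equ:IdentityFromViewingOtimesAsHS}, is precisely what turns the a priori nonlinear map $x\mapsto\bE[U|X=x]$ into a bounded affine function of $V$. This is the ``kernel trick'' in action; once it has been carried out, the remainder of the argument is a direct appeal to the machinery developed in Section~\ref{section:LinearConditionalMean}.
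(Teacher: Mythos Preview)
Your argument is correct and follows essentially the same route as the paper: use Assumption~\ref{assump:weakerCME} together with \eqref{equ:IdentityFromViewingOtimesAsHS} to recognise $\fm$ as an affine Hilbert--Schmidt map of $V$, invoke \Cref{proposition:AssumptionHierarchyCMEimpliesOldVersion} for the range inclusion, and then apply \Cref{theorem:LinearConditionalMeanUnderRangeAssumption}. The only cosmetic difference is that the paper uses the injectivity of $\varphi$ (Assumption~\ref{assumption:CME}\ref{assumpitem:CMEsets_kernels_RKHS_featuremaps}) to identify $\sigma(V)=\sigma(X)$ and hence $\bE[U|V]=\bE[U|X]$ directly, whereas you reach the same conclusion via the tower property and $\sigma(V)$-measurability of $c+AV$; both are fine.
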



\proofinappendix


\begin{remark}
	Note that we have proven a stronger statement than originally intended.
	Namely, the CME operator $(C_{V}^{\dagger} C_{VU})^\ast $ is not just bounded, but even Hilbert--Schmidt.
	However, it can be argued that this property is already hidden in the assumptions, namely in \Cref{assumption:AssumptionHierarchyCMEalmosteverywhere}\ref{assump:weakerCME}, since $\cG\otimes \cH \cong \HS(\cH;\cG)$;
\end{remark}

\begin{remark}
	A similar statement can be proven for uncentered covariance operators under the stronger \Cref{assumption:AssumptionHierarchyCMEalmosteverywhere}\ref{assump:strongCME}, see \citet[Theorem~5.3]{klebanov2019rigorous}.
\end{remark}


\begin{theorem}[{CME under \Cref{assumption:AssumptionHierarchyCMEalmosteverywhere}\ref{assump:weakerCMElimit}}]
	\label{thm:CMEunderBstar}
	Under \Cref{assumption:CME,assumption:AssumptionHierarchyCMEalmosteverywhere}\ref{assump:weakerCMElimit} and using \Cref{notation:NotationForIncompatibleLCM}, the operators $\gamma_{U|V}^{(n)}$ satisfy
	\begin{align*}
		&\bignorm{\bE[U|X] - \gamma_{U|V}^{(n)}(\varphi(X))}_{L^2(\bP;\cG)}
		\xrightarrow[n \to \infty]{} 0,
		\\
		&\bignorm{\bE[U|X=x] - \gamma_{U|V}^{(n)}(\varphi(x)) }_{\cG}
		\xrightarrow[n \to \infty]{} 0
		& \text{for $\bP_{X}$-a.e.\ $x\in\cX$.}
	\end{align*}
\end{theorem}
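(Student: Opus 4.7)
The plan is to reduce this statement to \Cref{theorem:LinearConditionalMeanIncompatibleCase} by verifying that, under \ref{assump:weakerCMElimit}, the true conditional expectation coincides with the linear conditional expectation, i.e.\ $\bE[U|X] = \bE^{\ABLin}[U|V]$. Once this identification is made, the two claimed convergences are \eqref{equ:L2ConvergenceToLCE} and \eqref{equ:ASconvergenceToLCE} translated through $v = \varphi(x)$.

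\textbf{Step 1 (replace $X$ by $V$).} Since \Cref{assumption:CME}\ref{assumpitem:CMEsets_kernels_RKHS_featuremaps} assumes $\varphi$ to be injective and measurable, the $\sigma$-algebras $\sigma(X)$ and $\sigma(V)$ coincide. Thus $\bE[U|X] = \bE[U|V]$ a.s., and likewise $\bE[U|X=x] = \bE[U|V=\varphi(x)]$ for $\bP_{X}$-a.e.\ $x$.

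\textbf{Step 2 (membership of $\bE[U|V]$ in the closure).} The heart of the argument is to show, using \ref{assump:weakerCMElimit}, that $\bE[U|V] \in \overline{\ABLin_{\cG}\circ V}$ in $L^{2}(\bP;\cG)$. By hypothesis, $[\fm] \in \overline{(\cG\otimes\cH)_{\cC}}^{L_{\cC}^{2}(\bP_{X};\cG)}$, so there exist $\ff_{n}\in \cG\otimes\cH$ with
\[
	\bignorm{ \fm - \ff_{n} - \bE[(\fm-\ff_{n})(X)]}_{L^{2}(\bP_{X};\cG)}
	\xrightarrow[n\to\infty]{} 0.
\]
Via the key identity \eqref{equ:IdentityFromViewingOtimesAsHS} from \Cref{remark:ViewTensorProductAsHS}, $\ff_{n}(X) = (\ff_{n})_{\cH\to\cG}(\varphi(X)) = A_{n} V$, where $A_{n}\defeq (\ff_{n})_{\cH\to\cG}\in \HS(\cH;\cG) \subseteq \BLin(\cH;\cG)$. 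Setting $b_{n}\defeq \mu_{U}-A_{n}\mu_{V}\in \cG$, each $b_{n}+A_{n}(\quark) \in \ABLin(\cH;\cG)$, and by the isometry $\norm{h\circ X}_{L^{2}(\bP;\cG)} = \norm{h}_{L^{2}(\bP_{X};\cG)}$ we obtain
\[
	\bignorm{ \bE[U|X] - (b_{n}+A_{n}V)}_{L^{2}(\bP;\cG)} \xrightarrow[n\to\infty]{} 0,
\]
which establishes $\bE[U|V] = \bE[U|X] \in \overline{\ABLin_{\cG}\circ V}$.

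\textbf{Step 3 (apply stability and the LCE formula).} By \Cref{theorem:BasicPropertiesLCM}\ref{item:StabilityLCM}, whenever $\bE[U|V] \in \overline{\ABLin_{\cG}\circ V}$ one has $\bE^{\ABLin}[U|V] = \bE[U|V]$. Combining this with Step~1 yields $\bE^{\ABLin}[U|V] = \bE[U|X]$. \Cref{theorem:LinearConditionalMeanIncompatibleCase} then supplies sequences $\gamma_{U|V}^{(n)}$ satisfying \eqref{equ:L2ConvergenceToLCE} and \eqref{equ:ASconvergenceToLCE}, which immediately gives the first convergence claim and, after composition with $V = \varphi(X)$, the pointwise convergence claim. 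For the latter, let $N\subseteq\cH$ be the $\bP_{V}$-null set where \eqref{equ:ASconvergenceToLCE} may fail; then $\bP_{X}(\varphi^{-1}(N)) = \bP_{V}(N) = 0$, and on $\cX\setminus\varphi^{-1}(N)$ the substitution $v = \varphi(x)$ converts \eqref{equ:ASconvergenceToLCE} into the desired $\bP_{X}$-a.e.\ convergence.

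\textbf{Anticipated obstacle.} The only non-routine point is Step~2: carefully handling the quotient by constants in $L_{\cC}^{2}$ and producing an affine (not merely linear) approximant in $L^{2}(\bP;\cG)$. The bridge is Hilbert--Schmidt-ness of the operators $A_{n}$ coming from $\ff_{n}\in\cG\otimes\cH \cong \HS(\cH;\cG)$, together with the reproducing-property identity \eqref{equ:IdentityFromViewingOtimesAsHS}, which is precisely the structural feature of RKHSs that makes the nonlinear conditioning problem linear-algebraic.
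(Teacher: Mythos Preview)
Your proposal is correct and follows essentially the same route as the paper: use \ref{assump:weakerCMElimit} to produce $\ff_{n}\in\cG\otimes\cH$ and constants $c_{n}\in\cG$ approximating $\fm$ in $L^{2}(\bP_{X};\cG)$, invoke \eqref{equ:IdentityFromViewingOtimesAsHS} to recognise $\ff_{n}(X)+c_{n}$ as an affine function of $V$, conclude $\bE[U|V]\in\overline{\ABLin_{\cG}\circ V}$ so that $\bE[U|V]=\bE^{\ABLin}[U|V]$, and then apply \Cref{theorem:LinearConditionalMeanIncompatibleCase}. Your choice $b_{n}=\mu_{U}-A_{n}\mu_{V}$ is exactly the paper's $c^{(n)}=\bE[(\fm-\ff_{n})(X)]$ (since $\mu_{U}=\bE[\fm(X)]$ and $A_{n}\mu_{V}=\bE[\ff_{n}(X)]$), and your added detail on transferring the $\bP_{V}$-null set to a $\bP_{X}$-null set via $\varphi^{-1}$ is a welcome clarification.
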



\proofinappendix


\section{Application to Gaussian Conditioning in Hilbert Spaces}
\label{section:GaussianConditioning}

While the conditioning of a Gaussian random variable $(U,V)\colon \Omega \to \cG \oplus \cH$ on its second component is a well-established concept \citep{mandelbaum1984linear,hairer2005analysis}, the most general case (where $C_{V}$ is not necessarily injective) has only been treated rather recently by \citet{owhadi2015conditioning}.
In that work, by developing an approximation theory for shorted operators in terms of oblique projections and applying the martingale convergence theorem, the authors derive approximating sequences for both the conditional expectation $\bE[U|V]$ and the conditional covariance operator $\Cov[U|V]$.

The formula that \citet[Theorem~3.3]{owhadi2015conditioning} obtain for the conditional expectation $\bE[U|V]$ is identical to \eqref{equ:ASconvergenceToLCE} (with $\bE[U|V]$ in place of $\bE^{\ABLin}[U|V]$).
Similar to CMEs in \Cref{section:ApplicationToCMEsAlternative}, our theory provides an alternative derivation of this formula by
\begin{enumerate}[label = (\roman*)]
\item \label{item:GaussianCEFlinear}
proving $\bE[U|V] \in \overline{\ABLin_{\cG}\circ V}$, implying the identity $\bE[U|V] = \bE^{\ABLin}[U|V]$;
\item applying \Cref{theorem:LinearConditionalMeanIncompatibleCase}.
\end{enumerate}
Let us give a short sketch of the proof:



\begin{proof}[Proof sketch for \ref{item:GaussianCEFlinear}]
	Let $\overline{U} \defeq U - \mu_{U}$, $\overline{V} \defeq V - \mu_{V}$ and $\gamma \defeq \gamma_{\overline{U}|\overline{V}}\colon \cH\to\cG$ be the corresponding CEF.
	\citet[Theorem~3.11]{tarieladze2007disintegration} show that there exists a linear subspace $\tilde{\cH}$ of $\cH$ such that $\overline{V}\in \tilde{\cH}$ a.s.\ and the restriction $\gamma |_{\tilde{\cH}}$ is linear.
	In the proof, the authors further construct a sequence $\gamma_n\in \BLin(\cH;\cG)$ such that
	\[
	\gamma_n(h) = \sum_{i=1}^{n} \innerprod{h}{h_i} \gamma(h_i)
	\xrightarrow[n\to\infty]{}
	\gamma (h)
	\quad
	\text{for all } h\in \tilde{\cH}.
	\]
	Using the Karhunen--Lo\`eve expansion of $\overline{V}$, one can prove
	\[
	\bignorm{\gamma_{n}\circ \overline{V} - \gamma\circ \overline{V}}_{L^{2}(\bP;\cG)}^{2}
	\xrightarrow[n\to\infty]{}
	0.
	\]
	Hence $\bE[\overline{U}|\overline{V}] = \gamma\circ \overline{V} \in \overline{\BLin_{\cG} \circ V}$ and thereby
	$\bE[U|V] = \mu_{U} + \gamma\circ (V-\mu_{V}) \in \overline{\ABLin_{\cG} \circ V}$.
\end{proof}


The appeal to \citet[Theorem~3.11]{tarieladze2007disintegration} is somewhat unsatisfactory, since close inspection of the proof of that theorem reveals that it in fact establishes the entire conditional mean formula for Gaussian conditioning.
In this sense, our derivation of the formula for the conditional \emph{mean} $\bE[U|V]$ in the Gaussian case is not novel.
However, let us now turn to the conditional \emph{covariance} $\Cov[U|V]$.

It is well known that the conditional covariance is constant for Gaussian random variables (i.e.\ it does not depend on the value of the conditioning variable), and that, since $\bE[U|V] = \bE^{\ABLin}[U|V]$, it coincides with the ALCC, $\Cov[U|V] = \Cov^{\ABLin}_{V}[U]$.
Therefore, our results show that, in contrast to the conditional expectation, the conditional covariance does require the approximating sequence established by \citet[Theorem~3.4]{owhadi2015conditioning}, but the explicit formula from \Cref{thm:PropertiesLinearConditionalCovariance}\ref{item:GaussianFormulaForConditionalCovariance} applies.
In summary, we can consider three versions of the Gaussian conditional covariance formula:
\begin{itemize}
	\item
	the \emph{invertible case} in which $C_{V}$ is invertible and, in particular, $\cH$ is finite dimensional:
	\[
	\Cov[U|V]
	=
	C_{U} - C_{UV} C_{V}^{-1} C_{VU}.
	\]
	\item
	the \emph{compatible case} in which $\ran C_{VU}\subseteq \ran C_{V}$:
	\\[1ex]
	By \Cref{theorem:DouglasExistenceQ}, the operator $C_V^\dagger C_{VU}\in\BLin(\cG;\cH)$ is well defined and bounded and
	\[
	\Cov[U|V]
	=
	C_{U} - C_{UV} C_{V}^{\dagger} C_{VU}.
	\]
	\item
	the \emph{incompatible} (or \emph{general}) \emph{case}:
	\\[1ex]
	By \Cref{theorem:BakerDecompositionCrosscovarianceOperator}, the operator $M_{VU}\defeq (C_{V}^{1/2})^{\dagger} C_{VU}$ is well-defined and bounded and
	\[
	\Cov[U|V]
	=
	C_{U} - M_{VU}^{\ast} M_{VU}.
	\]
\end{itemize}

\section{Closing Remarks}
\label{section:ClosingRemarks}

This paper presents a rigorous theory of the linear conditional expectation (LCE) $\bE^{\ABLin}[\quark | V]$ that strongly extends the existing theory on Bayes linear analysis \citep{GoldsteinWooff2007}.

After the definitions of the linear conditional expectation $\bE^{\ABLin}[U | V]$ and the linear conditional covariance (LCC) operator $\Cov^{\ABLin}[U,W|V]$ --- which is related to, but differs from, the so-called adjusted covariance used in Bayes linear statistics --- we studied in detail which properties of the common conditional expectation $\bE[\quark | V]$ and conditional covariance $\Cov[\quark, \quark|V]$ hold for their linear approximations.
Amongst others, we proved several tower properties and the laws of total expectation and total covariance.
On the other hand, $\bE^{\ABLin}[\quark | V]$ is neither monotonic, nor contractive in $L^{p}(\bP;\cG)$ (except for $p=2$, which is clear from its definition) and does not fulfil the triangle inequality.
The dominated convergence theorem holds only under modified assumptions and, so far, could only be proved under the assumption that $C_{V}$ has finite rank (see \Cref{theorem:BasicPropertiesLCM}\ref{item:DominatedConvergenceLCMmodified}, \Cref{remark:DCT}, and \Cref{counterexample:CounterexamplesPropertiesLCM}\ref{item:DominatedConvergenceLCMconventional}).

We derived explicit formulae for both the LCE and the LCC, distinguishing between the so-called compatible (simple) and the incompatible (hard) case, as well as providing a regularised formula for the LCE.

Naturally, whenever $\bE^{\ABLin}[U | V] = \bE[U | V]$, these formulae apply for the common conditional expectation.
This trivial observation allowed us to provide an alternative derivation of the Gaussian conditioning formulae and give a simple and intuitive proof of the widely-used technique of conditional mean embeddings (CMEs) in machine learning:
it turns out that, if $U = \psi(Y)$ and $V = \varphi(X)$ are reproducing kernel Hilbert space (RKHS) embeddings of some random variables $X$ and $Y$, then the above property holds true under rather mild conditions.

One direction for future work is the derivation of optimal regularisation schemes $\varepsilon(n) \to 0$ when the regularised case considered in \Cref{section:RegularizedLCM} is applied to empirical sample data consisting of $n \to \infty$ data points.
We anticipate that this will be a rich vein of research, but also decidedly non-trivial, since such problems admit no general solution and effective strategies (be they a priori, a posteriori, or heuristic) rely on appropriate source conditions for the unknowns.

Finally, we note that this work has concentrated on \emph{centred} \mbox{(cross-)}covariance operators, which are associated with affine approximations of the conditional expectation function.
Some, but not all of the statements can also be proved for \emph{uncentred} operators, which are associated with linear approximations of the CEF and are often used in practice;
for example, the uncentred formulation is commonly used for CMEs.
However, the theory with uncentred operators has weaker statements and is more restrictive, and so we strongly encourage the use of centred operators.


\section{Proofs}
\label{section:Proofs}


\begin{proof}[Proof of \Cref{prop:CharacterizeClosureOfBcircV}]
	We only give the proof of the second statement, which is similar to the first one but slightly more technical.
	Let $W \in \overline{\ABLin_{\cG}\circ V}$ and $(\gamma_{n})_{n\in\bN}$ be a sequence in $\ABLin(\cH;\cG)$, such that
	\[
		\norm{\gamma_{n}\circ V - W}_{L^{2}(\bP;\cG)}
		\xrightarrow[n\to\infty]{\text{a.s.}}
		0.
	\]
	This implies that, for $\overline{V} \defeq V-\mu_{V}$ and $\overline{W} \defeq W-\mu_{W}$,
	\[
		\bignorm{\overline{\gamma}_{n}\circ \overline{V} - \overline{W}}_{L^{2}(\bP;\cG)}
		\xrightarrow[n\to\infty]{\text{a.s.}}
		0.
	\]
	By \citet[Corollary 2.32]{folland1999real} there exists a subsequence $(\overline{\gamma}_{n_{k}})_{k\in\bN}$ of $(\overline{\gamma}_{n})_{n\in\bN}$ such that
	\[
		\bignorm{\overline{\gamma}_{n_{k}}\circ \overline{V}(\omega) - \overline{W}(\omega)}_{\cG}
		\xrightarrow[k\to\infty]{}
		0
		\text{ for $\bP$-a.e.\ $\omega\in\Omega$.}
	\]
	Define the linear subspace $\cH_{0} \subseteq \cH$ and the (possibly unbounded) linear operator $A_{0}\colon \cH_{0} \to \cG$ by
	\begin{align*}
		\cH_{0}
		& \defeq
		\Set{ h \in \cH }{ \overline{\gamma}_{n_{k}}(h) \text{ converges in } \cG },
		&
		A_{0}(h)
		& \defeq
		\lim_{k\to\infty} \overline{\gamma}_{n_{k}}(h)
		\text{ for } h\in \cH_{0},
	\end{align*}
	and extend $A_{0}$ trivially\footnote{Choose a Hamel basis $\cB_1$ of $\cH_{0}$, extend it to a basis $\cB_1\cup \cB_2$ of $\cH$ and set $A \defeq \tilde{A}$ on $\cB_1$ and $A \defeq 0$ on $\cB_2$.} to a linear operator $A$ on $\cH$.
	Then $\overline{V}\in \cH_{0}$ a.s.\ and $A \circ \overline{V} = \overline{W}$ a.s.
	Considering the affine operator $\gamma\colon \cH\to \cG$ given by $\gamma(h) = \mu_{W} + A(h-\mu_{V})$ yields that $\gamma\in \ALin_V(\cH;\cG)$ and $\gamma\circ V = W$ a.s.
\end{proof}


\begin{proof}[Proof of \Cref{lemma:CovarianceFormulaForLCM}]
	Since $\bE\bigl[ \bE^{\ABLin}[U|V] \bigr] = \bE[U]$ (which follows from $\bE$ and $\bE^{\ABLin}[\quark |V]$ being orthogonal projections, see the law of total linear expectation in \Cref{theorem:BasicPropertiesLCM}\ref{item:LawOfTotalExpectationLCM}), it follows that $\bE\bigl[ R^{\ABLin}[U|V] \bigr] = 0$.
	Hence, by \Cref{lemma:L2InnerProductTraceCovariance}, for any $\gamma\in\BLin(\cH;\cG)$,
	\[
	0
	=
	\innerprod{R^{\ABLin}[U|V]}{\gamma\circ V}_{L^2(\bP;\cG)}
	=
	\trace \Bigl( \Cov \bigl[R^{\ABLin}[U|V] , V \bigr] \, \gamma^{\ast} \Bigr).
	\]
	By \Cref{lemma:TraceOfProductImpliesZero} this implies that $\Cov\bigl[ R^{\ABLin}[U|V] , V \bigr] = 0$.
	Now let $W\in \overline{\ABLin_{\cF} \circ V}$.
	By \Cref{prop:CharacterizeClosureOfBcircV}, $W = \gamma\circ V$ for some $\gamma\in \ALin_V(\cH;\cF)$.
	Hence, invoking \Cref{lemma:L2InnerProductTraceCovariance} another time,
	\[
	\Cov\bigl[ R^{\ABLin}[U|V] , W \bigr]
	=
	\bigl(\overline{\gamma} \, \Cov\bigl[ V , R^{\ABLin}[U|V] \bigr]\bigr)^{\ast}
	=
	0 ,
	\]
	which completes the proof.
	(Note that the finite trace of the cross-covariance operator was essential in the above argument.)
\end{proof}


\begin{proof}[Proof of \Cref{theorem:BasicPropertiesLCM}]
	Properties \ref{item:StabilityLCM}--\ref{item:TowerPropertyLCM}, except for the second statement on linearity in \ref{item:LinearityLCM} and the second tower property in \ref{item:TowerPropertyLCM}, follow directly from the definitions of $\bE[U]$, $\bE[U|V]$ and $\bE^{\ABLin}[U|V]$ as orthogonal projections of $U$, the identity $\bE[\innerprod{\quark}{\quark}_{\cG}] = \innerprod{\quark}{\quark}_{L^{2}(\bP;\cG)}$ and the inclusions
	\[
		\ABLin(\cH;\cG)\circ V \subseteq L^2(\sigma(V);G) \subseteq L^2(\Sigma;G),
		\qquad
		\ABLin(\cF;\cG)\circ \varphi \subseteq \ABLin(\cH;\cG).
	\]

	For the second statement on linearity in \ref{item:LinearityLCM} first note that
	$\psi\bigl( \bE^{\ABLin}[U|V] \bigr) \in \overline{\ABLin_{\cF}\circ V}$.
	\Cref{lemma:L2InnerProductTraceCovariance,lemma:CovarianceFormulaForLCM} imply that, for any $\gamma \in \ABLin(\cH;\cF)$,
	\[
		\innerprod{\psi(U) - \psi\bigl(\bE^{\ABLin}[U|V]\bigr)}{\gamma\circ V}_{L^{2}(\bP;\cF)}
		=
		\trace\bigl( \psi\, \Cov\bigl[ \bE^{\ABLin}[U|V] , V \bigr] \overline{\gamma}^{\ast}
		-
		\psi\, \Cov[U,V] \overline{\gamma}^{\ast} \bigr)
		=
		0,
	\]
	which completes the proof of \ref{item:LinearityLCM}.

	For second tower property in \ref{item:TowerPropertyLCM} let
	$
	\bE^{\ABLin}[U|V] = \gamma \circ V \in \overline{\ABLin_{\cG}\circ V}
	$
	(using \Cref{prop:CharacterizeClosureOfBcircV}) and assume that there exists $\delta \in \ABLin(\cG;\cG)$ such that
	\[
		\bignorm{U - \delta \circ \bE^{\ABLin}[U|V]}_{L^2(\bP;\cG)}
		<
		\bignorm{U - \bE^{\ABLin}[U|V]}_{L^2(\bP;\cG)}.
	\]
	Then $\delta \circ \gamma \circ V \in \overline{\ABLin_{\cG}\circ V}$ is a better $L^2(\bP;\cG)$-approximation of $U$ than $\bE^{\ABLin}[U|V]$, which contradicts the definition of $\bE^{\ABLin}[U|V]$.

	For the law of total linear covariance \ref{item:LawOfTotalCovarianceLCM}, first note that, by the law of total linear expectation \ref{item:LawOfTotalExpectationLCM} and \Cref{lemma:CovarianceFormulaForLCM},
	\[
		\bE\bigl[ \Cov^{\ABLin}[U,W|V] \bigr]
		=
		\bE \bigl[ \bE^{\ABLin} \bigl[ R^{\ABLin}[U|V] \otimes R^{\ABLin}[W|V] \, \big| \, V\bigr] \bigr]
		=
		\Cov \bigl[ R^{\ABLin}[U|V] , R^{\ABLin}[W|V]\bigr].
	\]
	Hence, again by \Cref{lemma:CovarianceFormulaForLCM},
	\begin{align*}
		\Cov[U,W]
		&=
		\Cov\bigl[ \bE^{\ABLin}[U|V] + R^{\ABLin}[U|V] , \bE^{\ABLin}[W|V] + R^{\ABLin}[W|V] \bigr]
		\\
		&=
		\Cov\bigl[ \bE^{\ABLin}[U|V], \bE^{\ABLin}[W|V] \bigr]
		+ 0 + 0 +
		\Cov\bigl[ R^{\ABLin}[U|V] , R^{\ABLin}[W|V] \bigr]
		\\
		&=
		\Cov\bigl[ \bE^{\ABLin}[U|V], \bE^{\ABLin}[W|V] \bigr]
		+
		\bE\bigl[ \Cov^{\ABLin}[U,W|V] \bigr],
	\end{align*}
	proving the law of total linear covariance in \ref{item:LawOfTotalCovarianceLCM}.
	The inequality $\Cov[U] \geq \Cov\bigl[\bE[U|V]\bigr]$ is well known (it follows from the common law of total covariance).
	For the second inequality, let $U'\defeq \bE[U|V]$.
	By the first tower property in \ref{item:TowerPropertyLCM}, $\bE^{\ABLin}[U'|V] = \bE^{\ABLin}[U|V]$.
	Hence, by the law of total linear covariance that was just established,
	\[
		\Cov[U']
		=
		\Cov\bigl[\bE^{\ABLin}[U'|V]\bigr] + \bE\bigl[ \Cov^{\ABLin}[U'|V] \bigr]
		=
		\Cov\bigl[\bE^{\ABLin}[U|V]\bigr] + \Cov\bigl[ R^{\ABLin}[U'|V] \bigr]
		\ge
		\Cov\bigl[\bE^{\ABLin}[U|V]\bigr],
	\]
	where we used \Cref{lemma:CovarianceFormulaForLCM} and the law of total linear expectation \ref{item:LawOfTotalExpectationLCM} in the second step, finalising the proof of \ref{item:LawOfTotalCovarianceLCM}.

	In order to prove \ref{item:PullingOutIndependentLCM}, note that the independence of $W$ and $(U,V)$ implies
	\[
		\Cov[W\otimes U , V]
		=
		\bE[W \otimes U \otimes V] - \bE[W \otimes U] \otimes \mu_{V}
		=
		\mu_{W} \otimes \bE[U\otimes V] - \mu_{W}\otimes \mu_{U} \otimes \mu_{V}
		=
		\mu_{W} \otimes C_{UV}.
	\]
	Further, by \Cref{lemma:CovarianceFormulaForLCM}, $\Cov\bigl[ \bE^{\ABLin}[U|V] , V \bigr] = C_{UV}$.
	Since $\bE\bigl[ \mu_{W} \otimes \bE^{\ABLin}[U|V] \bigr] = \mu_{W}\otimes \mu_{U} = \bE[W \otimes U]$, \Cref{lemma:L2InnerProductTraceCovariance} implies that, for any $\gamma\in \ABLin(\cH;\cF \otimes \cG)$,
	\begin{align*}
		\innerprod{W\otimes U - \mu_{W}\otimes \bE^{\ABLin}[U|V]}{\gamma \circ V}_{L^2(\bP;\cF \otimes \cG)}
		&=
		\trace \bigl(
		\Cov[W \otimes U - \mu_{W} \otimes \bE^{\ABLin}[U|V] , V \bigr]
		\, \overline{\gamma}^{\ast} \bigr)
		\\
		&=
		\trace \Bigl(\bigl(
		\Cov[W \otimes U , V]
		-
		\mu_{W} \otimes C_{UV}
		\bigr)\, \overline{\gamma}^{\ast} \Bigr)
		\\
		&=
		0.
	\end{align*}
	Therefore, $\mu_{W}\otimes \bE^{\ABLin}[U|V]$ is the $L^2(\bP;\cF \otimes \cG)$-orthogonal projection of $W \otimes U$ onto
	\linebreak
	$\overline{\ABLin_{\cF\otimes\cG}\circ V}$.

	In order to prove\footnote{A simpler proof can be obtained from using \Cref{theorem:LinearConditionalMeanUnderRangeAssumption}: After establishing \eqref{equ:ProofOfDCTconvergenceOfMeanAndCovariance}, \ref{item:DominatedConvergenceLCMmodified} follows from
		\[
		\bE^{\ABLin}[U_{k} | V]
		=
		\mu_{U_{k}} + (C_{V}^{\dagger} C_{V U_{k}})^{\ast} (V - \mu_{V})
		\xrightarrow[k\to\infty]{\text{a.s.}}
		0.
		\]
	}
	\ref{item:DominatedConvergenceLCMmodified} first note that, by linearity \ref{item:LinearityLCM}, we may assume that $U=0$.
	Further, since $\text{\ref{item:DCTwithSquareIntegrableDominatingFunction}} \implies \text{\ref{item:DCTunderL2Convergence}}$, we may simply assume that \ref{item:DCTunderL2Convergence} holds.
	This implies that
	\begin{equation}
		\label{equ:ProofOfDCTconvergenceOfMeanAndCovariance}
		\mu_{U_{k}} \xrightarrow[k\to\infty]{}0,
		\qquad
		\norm{C_{U_{k}}} \xrightarrow[k\to\infty]{}0,
		\qquad
		C_{U_{k} V}
		=
		C_{U_{k}}^{1/2} R_{U_{k} V} C_{V}^{1/2}
		\xrightarrow[k\to\infty]{}0,
	\end{equation}
	where we used \Cref{theorem:BakerDecompositionCrosscovarianceOperator} and adopted the notation therein (note that $C_{U_{k} V}$ has finite rank, since $C_{V}$ has finite rank by assumption).
	By \Cref{lemma:CovarianceFormulaForLCM}	and \Cref{lemma:L2InnerProductTraceCovariance},
	\[
		\overline{\gamma}_{U_{k}|V}^{\ABLin} C_{V}
		=
		\Cov[\gamma_{U_{k}|V}^{\ABLin} \circ V , V]
		=
		\Cov[\bE^{\ABLin}[U_{k}|V] , V]
		=
		C_{U_{k}V}
		\xrightarrow[k\to\infty]{} 0.
	\]
	Therefore, $\overline{\gamma}_{U_{k}|V}^{\ABLin}\big |_{\ran C_{V}} \xrightarrow[k\to\infty]{} 0$ and, by the assumption of finite rank,
	\[
		V\in \ran C_{V} \text{ a.s.}\ \quad \text{ and } \quad \overline{\gamma}_{U_{k}|V}^{\ABLin} \circ V
	\xrightarrow[k\to\infty]{\text{a.s.}} 0.
	\]
	Denoting the constant part of $\gamma_{U_{k}|V}^{\ABLin}$ by $b_{k}\in\cG$, i.e.\ $\gamma_{U_{k}|V}^{\ABLin}(v) = b_k + \overline{\gamma}_{U_{k}|V}^{\ABLin}(v)$ for $v\in\cH$, the law of total linear expectation \ref{item:LawOfTotalExpectationLCM} implies
	\[
		b_{k} + \overline{\gamma}_{U_{k}|V}^{\ABLin} \mu_{V}
		=
		\bE\bigl[ \bE^{\ABLin}[U_{k}|V] \bigr]
		=
		\mu_{U_{k}}
		\xrightarrow[k\to\infty]{\text{a.s.}} 0.
	\]
	Since $\mu_{V} \in \ran C_{V}$ and $\overline{\gamma}_{U_{k}|V}^{\ABLin}\big |_{\ran C_{V}} \xrightarrow[k\to\infty]{} 0$, we obtain $b_{k} \xrightarrow[k\to\infty]{} 0$ and thereby
	\[
		\bE^{\ABLin}[U_{k}|V]
		=
		b_k + \overline{\gamma}_{U_{k}|V}^{\ABLin} \circ V
		\xrightarrow[k\to\infty]{\text{a.s.}} 0.
	\]
\end{proof}


\begin{proof}[Proof of \Cref{counterexample:CounterexamplesPropertiesLCM}]
	We choose $\cH = \cG = \cF = \bR$ for all counterexamples provided in this proof.
	For counterexamples to \ref{item:MonotonicityLCM}--\ref{item:WrongTowerPropertyLCM}, let $\bP$ to be the uniform distribution on $\Omega = \{ 1,2,3 \}$ and the random variables $V$, $U_{1} \defeq V$ and $U_{2} \defeq W \defeq \absval{V}$ be given by 	
	\begin{center}
		\begin{tabular}{x{3em} x{7em} x{10em} x{6em} x{6em}}
			\toprule
			$\omega \in \Omega$ & $U_{1}(\omega) = V(\omega)$ & $U_{2}(\omega) = W = \absval{V(\omega)}$ &  $\bE^{\ABLin}[U_{1}|V](\omega)$ &  $\bE^{\ABLin}[U_{2}|V](\omega)$
			\\
			\midrule
			$1$ & $-1$ & $1$ & $-1$ & $\nicefrac{2}{3}$ \\
			$2$ & $\phantom{-}0$ & $0$ & $\phantom{-} 0$ & $\nicefrac{2}{3}$ \\
			$3$ & $\phantom{-}1$ & $1$ & $\phantom{-} 1$ & $\nicefrac{2}{3}$ \\
			\bottomrule
		\end{tabular}
	\end{center}
	Clearly, $\bE^{\ABLin}[U_{1}|V] = U_{1} = V$ and, by solving a simple linear regression (or simply by symmetry and \Cref{theorem:BasicPropertiesLCM}\ref{item:LawOfTotalExpectationLCM}), $\bE^{\ABLin}[U_{2}|V] \equiv \nicefrac{2}{3}$, as illustrated in \Cref{fig:CounterexampleMonotonicityTriangleJensenPullOutFactors} (left).	
	Therefore, $U_{2}\ge U_{1}$, but $\bE^{\ABLin}[U_{2}|V] \ngeq \bE^{\ABLin}[U_{1}|V]$, disproving \ref{item:MonotonicityLCM}.	
	Further, $\absval{\bE^{\ABLin}[U_{1}|V]} \le \bE^{\ABLin}[\absval{U_{1}}|V]$ does not hold, providing a counterexample to \ref{item:TriangleInequalityLCM} and \ref{item:JensensInequalityLCM}.	
	For $f\colon \bR\to\bR,\ f(x) = x$, we obtain $f(V)\bE^{\ABLin}[U_{1}|V] = V^{2}$, which clearly cannot equal $\bE^{\ABLin}[f(V)U_{1}|V]$, since it is not an affine transformation of $V$, disproving \ref{item:PullingOutKnownLCM}.	
	Finally,
	$\bE^{\ABLin}\bigl[ \bE^{\ABLin}[U_2|V] \, \big|\, W\bigr]
	=
	\bE^{\ABLin} [ \nicefrac{2}{3} | W ]
	=
	\nicefrac{2}{3}$ a.s.,
	which clearly differs from $\bE^{\ABLin}[U_2|W] = W$ and thereby provides a counterexample to \ref{item:WrongTowerPropertyLCM}.
	
	\begin{figure}[t]
		\centering
		\begin{subfigure}[b]{0.45\textwidth}
			\centering
			\includegraphics[height=0.9\textwidth]{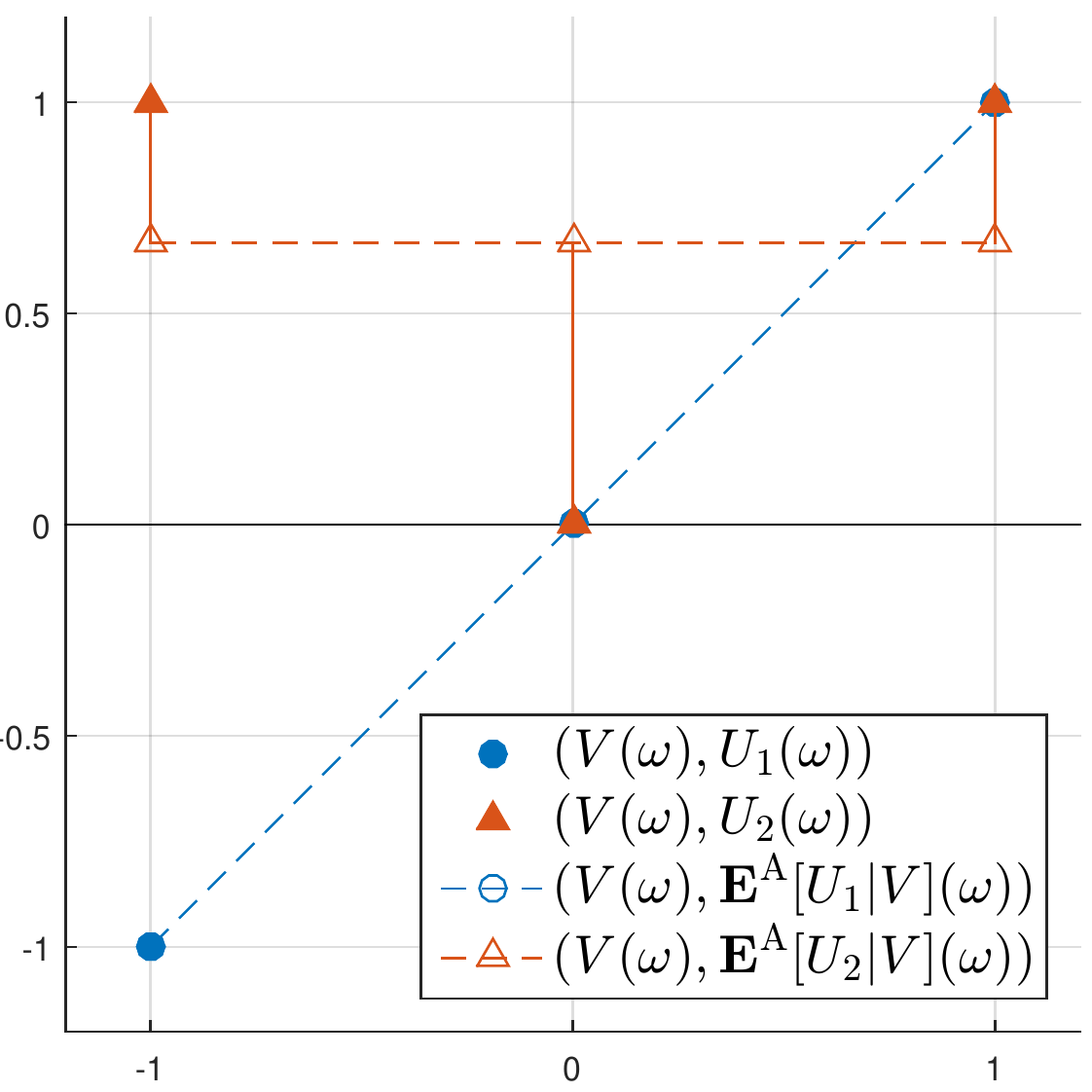}
		\end{subfigure}
		\hspace{1em}
		\begin{subfigure}[b]{0.45\textwidth}
			\centering
			\includegraphics[height=0.9\textwidth]{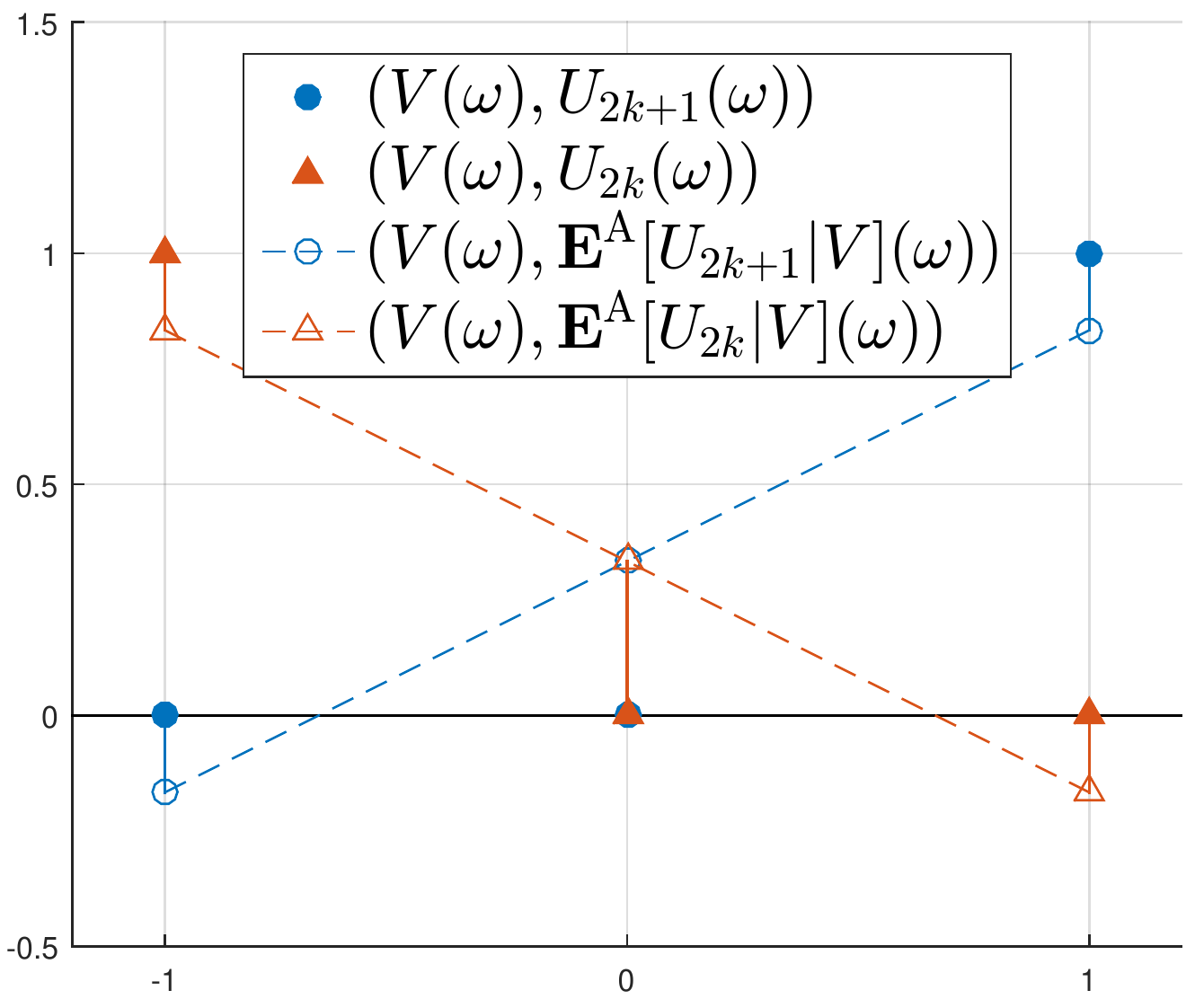}
		\end{subfigure}
		\caption{
			\emph{Left:}
			Counterexample to \Cref{counterexample:CounterexamplesPropertiesLCM}\ref{item:MonotonicityLCM}--\ref{item:WrongTowerPropertyLCM} as described above, with the corresponding LCEs $\bE^{\ABLin}[U_{k}|V]$, $k=1,2$.
			\emph{Right:}
			Counterexample to Fatou's lemma (\Cref{counterexample:CounterexamplesPropertiesLCM}\ref{item:FatouLCM}) with corresponding LCEs $\bE^{\ABLin}[U_{k}|V]$, $k\in\bN$.
		}
		\label{fig:CounterexampleMonotonicityTriangleJensenPullOutFactors}
	\end{figure}

	Since $\bE^{\ABLin}$ lacks monotonicity, a counterexample to \ref{item:FatouLCM} is easy to construct.
	Consider the uniform distribution $\bP$ on $\Omega = \{ 1,2,3 \}$ and $V$ as well as the sequence $(U_k)_{k\in\bN}$ given by
	\begin{center}
		\begin{tabular}{x{3em} x{3em} x{5em} x{4em} x{6em} x{10em}}
			\toprule
			$\omega\in\Omega$ & $V(\omega)$ & $U_{2k+1}(\omega)$ & $U_{2k}(\omega)$ & $\displaystyle \liminf_{k\to\infty} U_{k}(\omega)$ & $\displaystyle\liminf_{k\to\infty} \bE^{\ABLin}[U_{k}|V](\omega)$
			\\
			\midrule
			$1$ & $-1$ & $0$ & $1$ & 0 & $-\nicefrac{1}{6}$ \\
			$2$ & $\phantom{-}0$ & $0$ & $0$ & 0 & $\phantom{-}\nicefrac{2}{6}$ \\
			$3$ & $\phantom{-}1$ & $1$ & $0$ & 0 & $-\nicefrac{1}{6}$ \\
			\bottomrule
		\end{tabular}
	\end{center}
	Then $\bE^{\ABLin}[\liminf_{k\to\infty} U_{k}|V] = \bE^{\ABLin}[0|V] = 0$, while $\liminf_{k\to\infty} \bE^{\ABLin}[U_{k}|V](\omega) < 0$ for $\omega = 1$ and $\omega=3$, which follows from the solution of a simple linear regression problem and is visualised in \Cref{fig:CounterexampleMonotonicityTriangleJensenPullOutFactors} (right).

	Let us now construct a counterexample to the (conventional) dominated convergence theorem \ref{item:DominatedConvergenceLCMconventional}.
	Let $\varepsilon>0$ and $\alpha = (2+2\varepsilon)^{-1}$, e.g.\ $\varepsilon = 1/4$ and $\alpha = 2/5$.
	Let $\bP$ be the uniform distribution on $\Omega = [-1,1]$ and
	\begin{align*}
		V(\omega)
		&=
		\begin{cases}
			(1+\omega)^{-\alpha} - 1 & \text{for } \omega\in[-1,0],
			\\
			-(1-\omega)^{-\alpha} + 1 & \text{for } \omega\in[0,1],
		\end{cases}
		\\[1ex]
		U_{k}(\omega)
		&=
		\begin{cases}
			(1+\omega)^{-2\alpha} - 1 & \text{for } \omega\in[\frac{1}{2k}-1,\frac{1}{k}-1],
			\\
			-(1-\omega)^{-2\alpha} + 1 & \text{for } \omega\in[1-\frac{1}{k},1-\frac{1}{2k}],
			\\
			0 & \text{otherwise,}
		\end{cases}
	\end{align*}
	as illustrated in \Cref{fig:CounterexampleDCT}.
	Clearly, each $U_{k}$ is bounded and thereby lies in $L^{2}(\bP;\cG)$ and $U_{k}\xrightarrow[k\to\infty]{\text{a.s.}} 0$.
	Then, $\bE^{\ABLin}[U_{k}|V] = a_{k} V + b_{k}$ for some $a_{k},b_{k}\in\bR$ where $b_{k} = 0$ for symmetry reasons.
	Let $\beta \defeq 3\alpha - 1$ and note that $\beta > 0$ for sufficiently small $\varepsilon$ ($\beta = 1/5$ in the above example).
	A straightforward computation shows that
	\begin{align*}
		\norm{ a_{k} V - U_{k} }_{L^{2}(\bP;\cG)}^{2}
		&=
		a_{k}^{2} \, \norm{ V }_{L^{2}(\bP;\cG)}^{2}
		-
		2 a_{k} \innerprod{V}{U_{k}}_{L^{2}(\bP;\cG)}
		+
		\norm{ U_{k} }_{L^{2}(\bP;\cG)}^{2}
		\\
		&=
		\frac{2(1+\varepsilon)}{\varepsilon} \, a_{k}^{2}
		-
		\frac{4k^{\beta}}{\beta} \, (2^{\beta}-1) \, a_{k}
		+
		\norm{ U_{k} }_{L^{2}(\bP;\cG)}^{2},
	\end{align*}
	which is minimised by
	$
	a_{k}
	=
	\tfrac{(2^{\beta}-1) \varepsilon}{\beta (1+\varepsilon) } \, k^{\beta}
	\xrightarrow[k\to\infty]{} \infty,
	$
	which contradicts $\bE^{\ABLin}[U_{k}|V] = a_{k} V \xrightarrow[k\to\infty]{\text{a.s.}} 0$.

	\begin{figure}[t]
		\centering
		\begin{subfigure}[b]{0.45\textwidth}
			\centering
			\includegraphics[width=\textwidth]{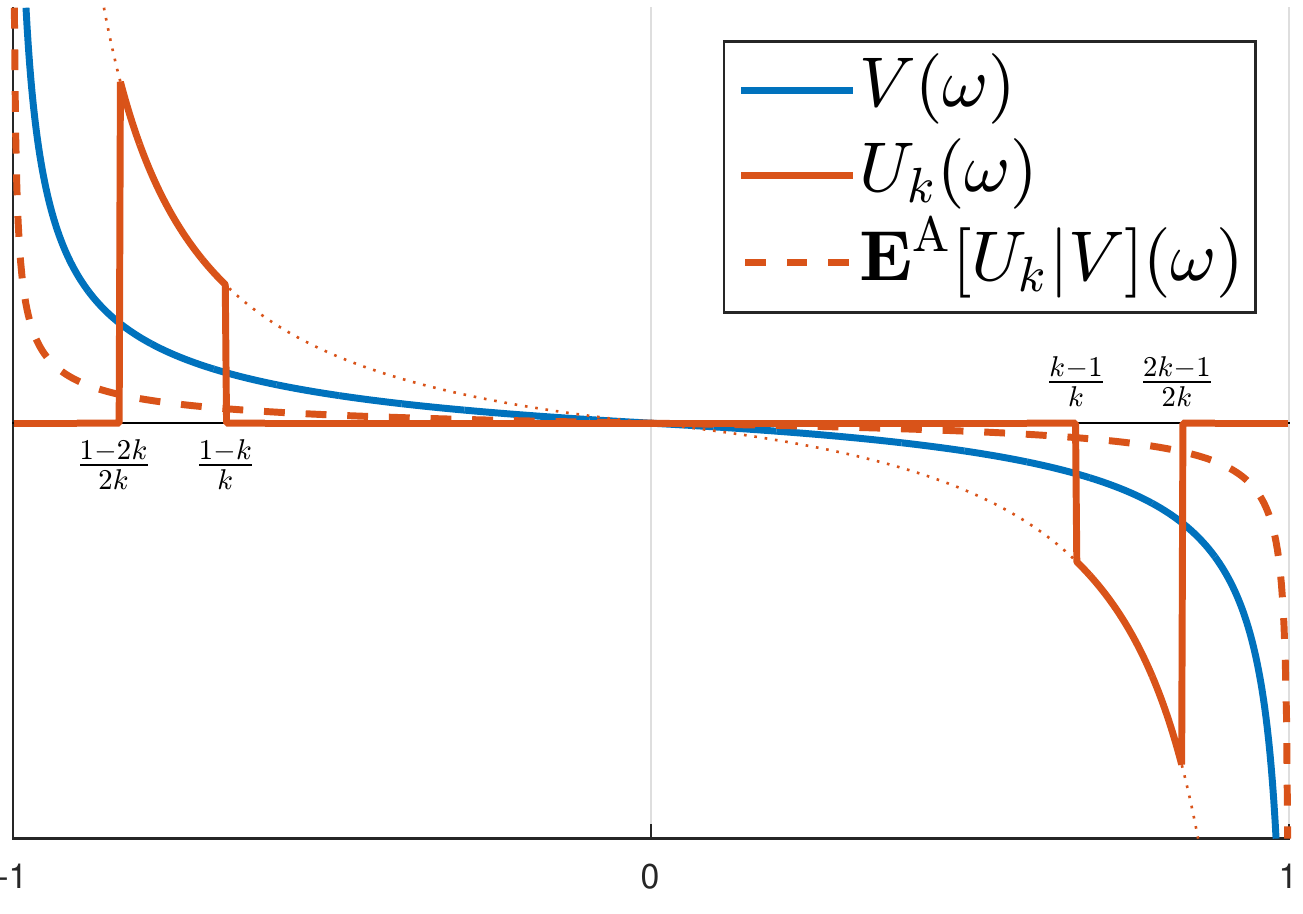}
		\end{subfigure}
		\hspace{1em}
		\begin{subfigure}[b]{0.45\textwidth}
			\centering
			\includegraphics[width=\textwidth]{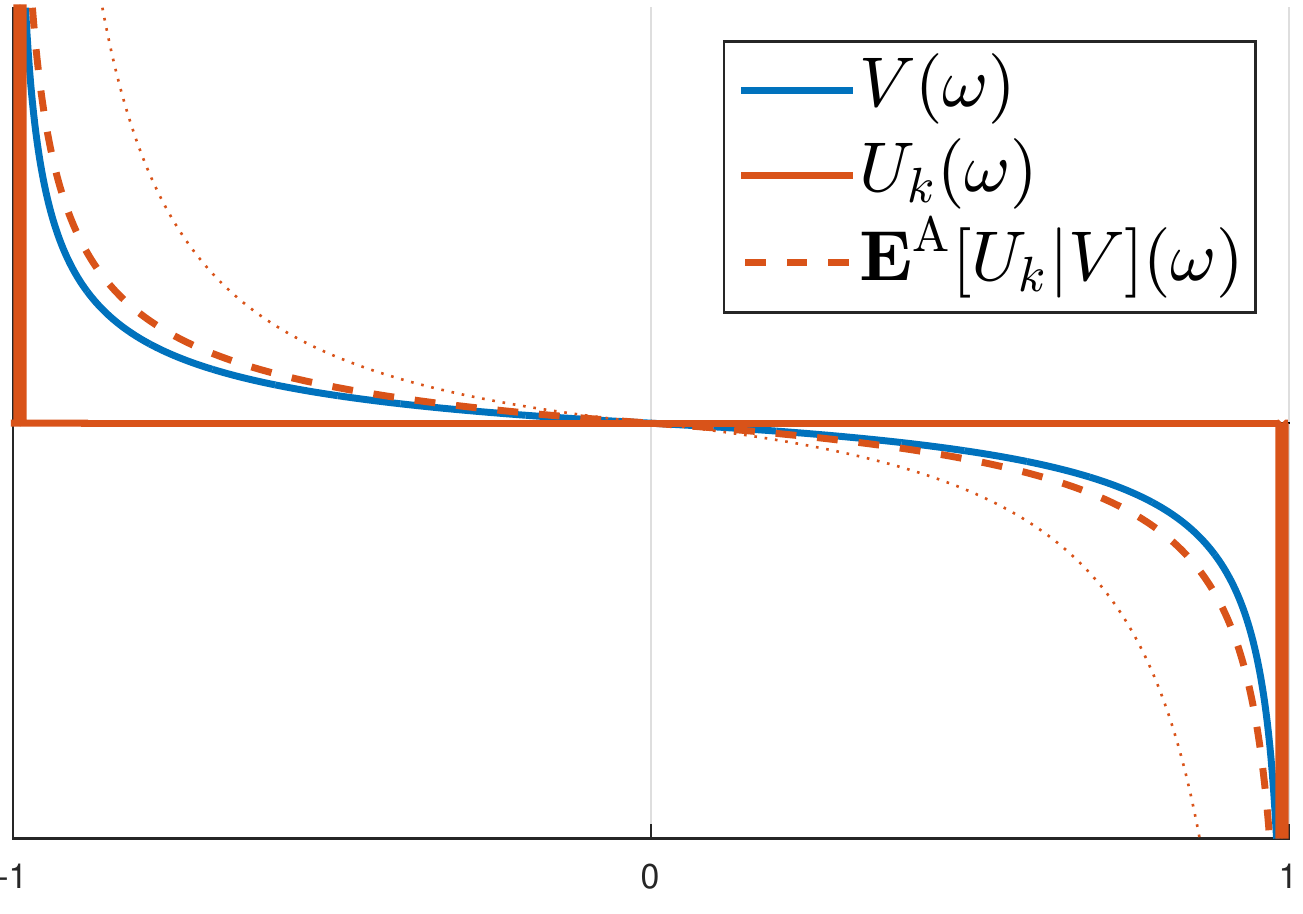}
		\end{subfigure}
		\caption{
			Counterexample to the dominated convergence theorem (\Cref{counterexample:CounterexamplesPropertiesLCM}\ref{item:DominatedConvergenceLCMconventional}).
			Note that we plot $\bE^{\ABLin}[U_{k}|V]$ as a function of $\omega$ (and not of $V$) which is why it not a linear function in contrast to the other plots, while being a multiple of $V$ by the factor $\alpha_{k}$.
			For sufficiently small $\varepsilon>0$, this factor $\alpha_{k}$ increases with $k$ (here $k=3$ (left), $k=70$ (right) and $\varepsilon=0.01$) as can be seen from the dotted red lines in the two plots.
			Therefore, in contrast to $(U_{k})_{k\in\bN}$, the sequence of LCEs $(\bE^{\ABLin}[U_{k}|V])_{k\in\bN}$ does not converge to zero a.s.
		}
		\label{fig:CounterexampleDCT}
	\end{figure}

	The following two counterexamples disprove \ref{item:ContractivenessLCM} for every $p<2$ and for every $p>2$, respectively (for $p=2$ the projection is clearly contractive, since it is orthogonal).
	Choose $\bP$ to be the uniform distribution on $\Omega = \{ 1,2,3,4 \}$ and the random variables $V$, $U_{1}$ and $U_{2}$ in the following way, where $\varepsilon \in (0,1)$ is a free parameter yet to be chosen.
	\begin{center}
		\begin{tabular}{x{3em} x{3em} x{3em} x{3em}}
			\toprule
			$\omega\in\Omega$ & $V(\omega)$ & $U_{1}(\omega)$ & $U_{2}(\omega)$
			\\
			\midrule
			$1$ & $-1$ & $-1$ & $-1$ \\
			$2$ & $-\varepsilon$ & $\phantom{-}0$ & $-2 \varepsilon$ \\
			$3$ & $\phantom{-}\varepsilon$ & $\phantom{-}0$ & $\phantom{-}2 \varepsilon$ \\
			$4$ & $\phantom{-}1$ & $\phantom{-}1$ & $\phantom{-}1$ \\
			\bottomrule
		\end{tabular}
	\end{center}
	
	\begin{figure}[t]
		\centering
		\begin{subfigure}[b]{0.4\textwidth}
			\centering
			\includegraphics[width=\textwidth]{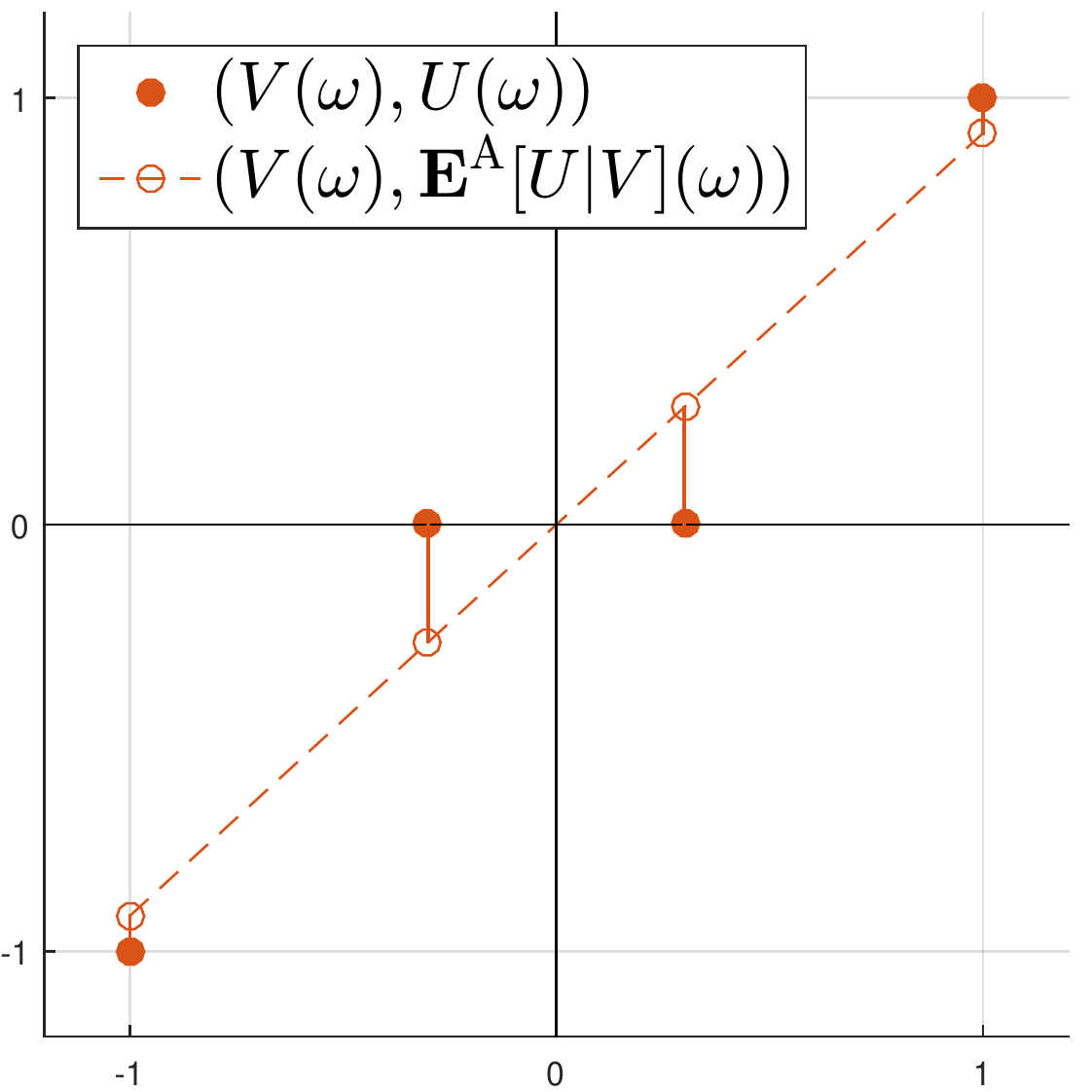}
		\end{subfigure}
		\hspace{1em}
		\begin{subfigure}[b]{0.4\textwidth}
			\centering
			\includegraphics[width=\textwidth]{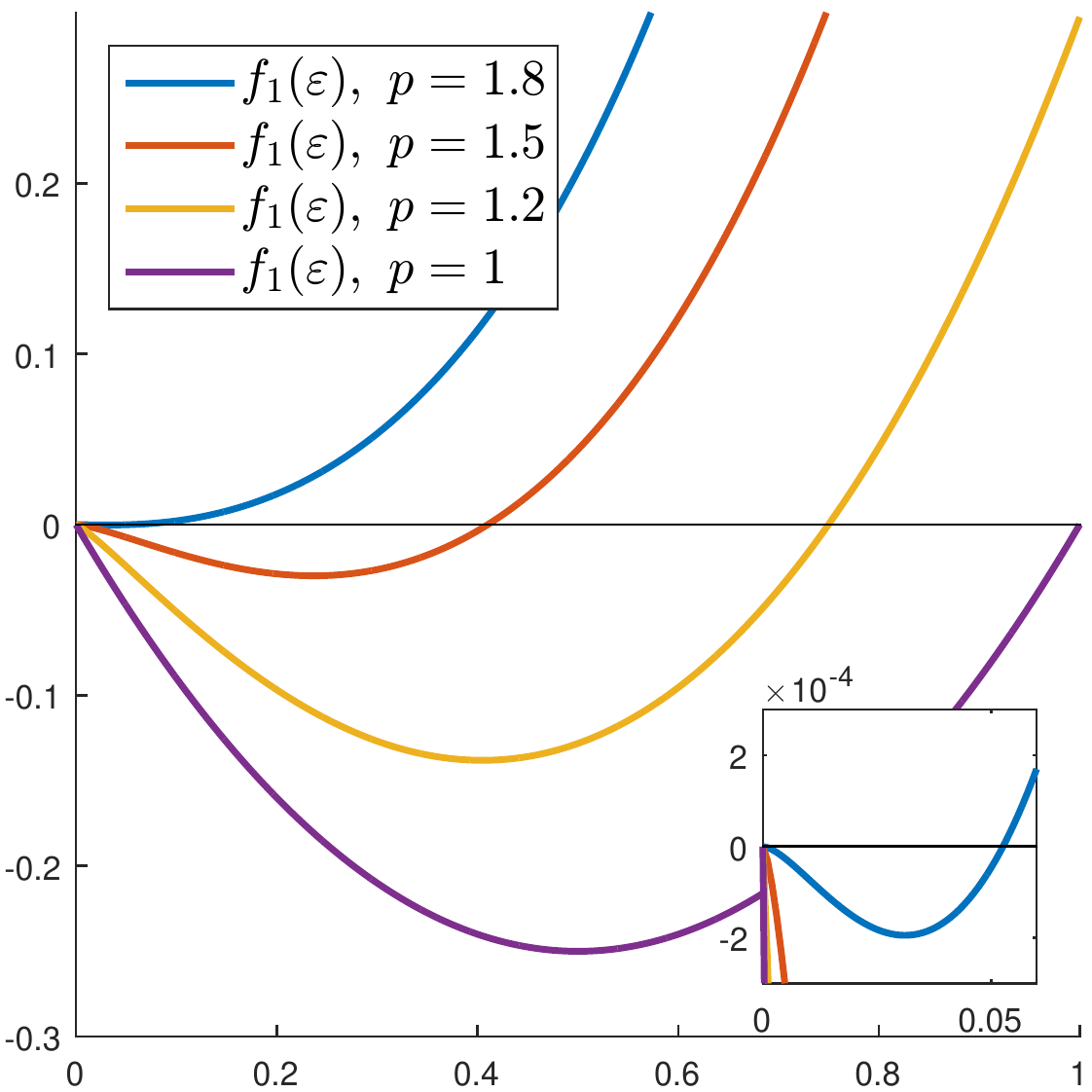}
		\end{subfigure}
		\vfill
		\begin{subfigure}[b]{0.4\textwidth}
			\centering
			\includegraphics[width=\textwidth]{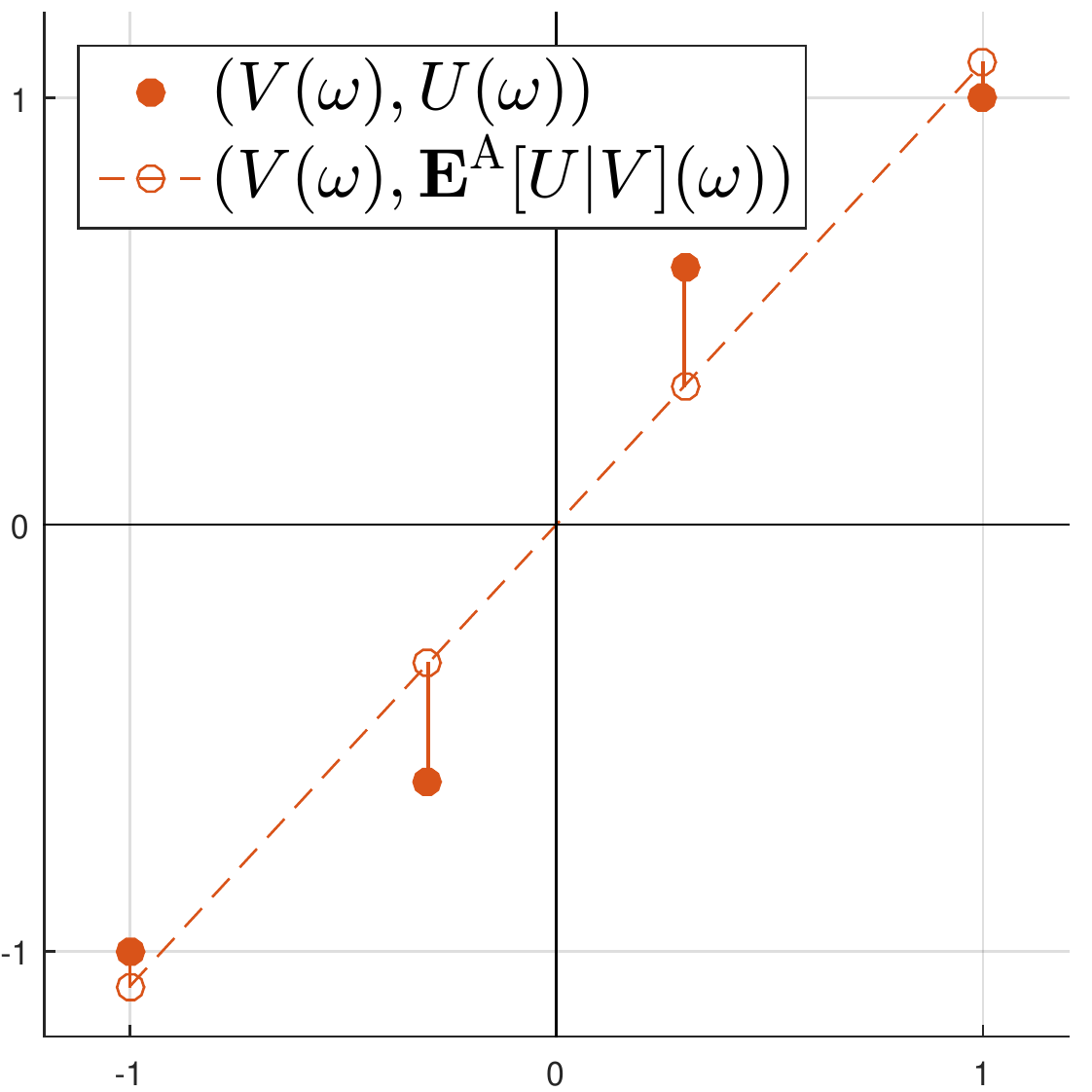}
		\end{subfigure}
		\hspace{1em}
		\begin{subfigure}[b]{0.4\textwidth}
			\centering
			\includegraphics[width=\textwidth]{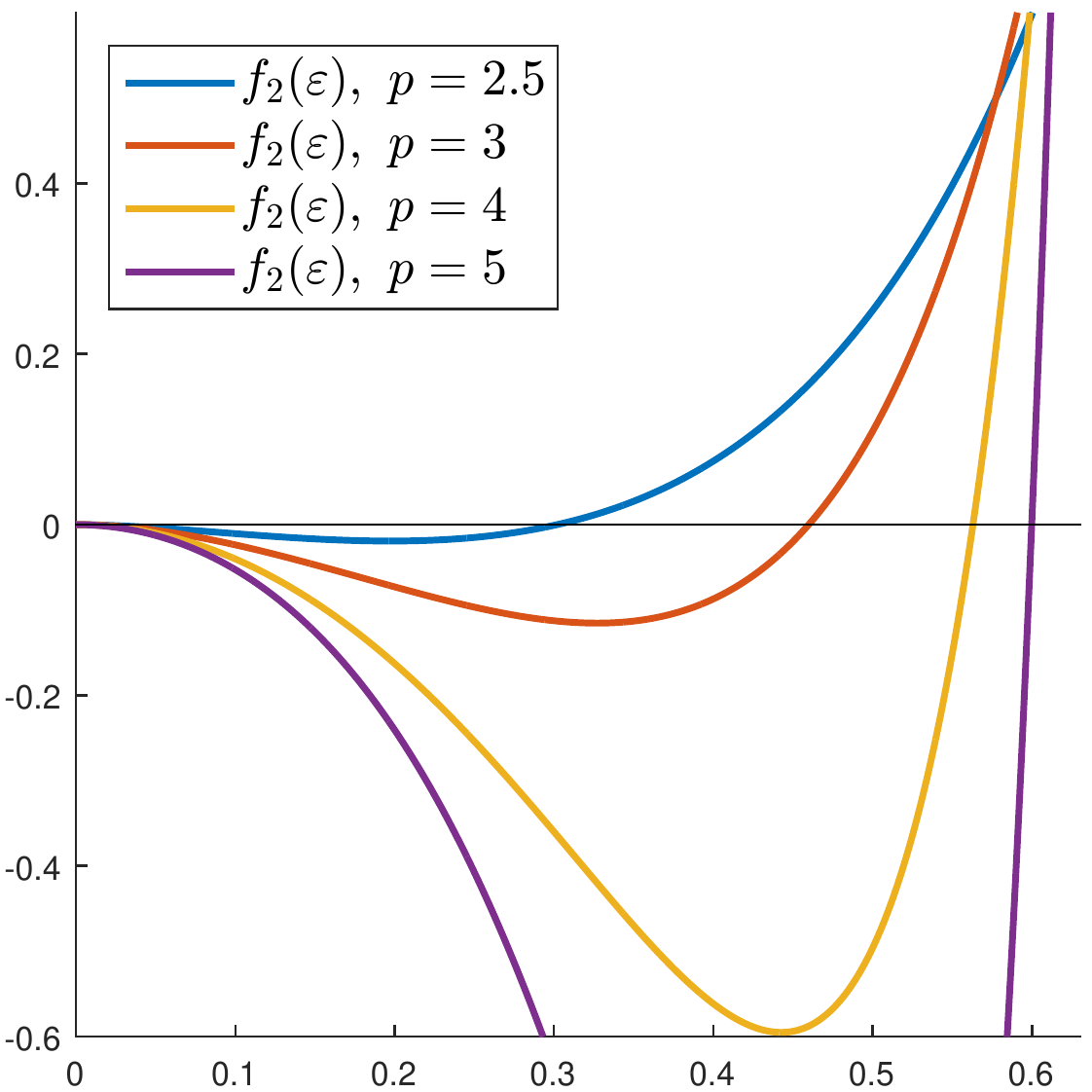}
		\end{subfigure}
		\caption{
			\emph{Top:} Counterexample to \Cref{counterexample:CounterexamplesPropertiesLCM}\ref{item:ContractivenessLCM} for $1\leq p<2$.
			\emph{Bottom:} Counterexample to \Cref{counterexample:CounterexamplesPropertiesLCM}\ref{item:ContractivenessLCM} for $p > 2$.
			\emph{Left:} LCEs for the above examples and $\varepsilon=0.3$.
			\emph{Right:} The functions $f_{1}(\varepsilon)$ and $f_{2}(\varepsilon)$ for several values of $p$. For every $p<2$ (respectively $p>2$) there exists a sufficiently small $\varepsilon > 0$ such that $f_j(\varepsilon)<0$, $j=1,2$.
		}
		\label{fig:NewCounterexampleContractiveness}
	\end{figure}
	
	Again, the computation of $\bE^{\ABLin}[U_{j}|V] = a_{j}V + b_{j}$, $a_{j},b_{j}\in\bR$, $j=1,2$, reduces to a linear regression that is solved by
	\[
	b_1 = b_2 = 0,
	\qquad
	a_{1} = \tfrac{1}{1 + \varepsilon^{2}},
	\qquad
	a_{2} = \tfrac{1 + 2\varepsilon^{2}}{1 + \varepsilon^{2}}.
	\]
	Note that $\bE[\absval{U_{1}}^{p}] = \tfrac{1}{2}$, $\bE[\absval{U_{2}}^{p}] = \tfrac{1}{2}(1+(2\varepsilon)^{p})$ and $\bE \bigl[ \absval{\bE^{\ABLin}[U_{j}|V]}^{p} \bigr] = \tfrac{1}{2}a_{j}^{p}(1+\varepsilon^{p})$, $j=1,2$.
	It follows that the inequality in \ref{item:ContractivenessLCM} for $U=U_{j}$, $j=1,2$, holds whenever
	\begin{align}
	\label{equ:ContractivenessFunction1}
	f_{1}(\varepsilon)
	& \defeq
	(1 + \varepsilon^{2})^{p} - (1 + \varepsilon^{p})
	\ge
	0,
	\\
	\label{equ:ContractivenessFunction2}
	f_{2}(\varepsilon)
	& \defeq
	(1+\varepsilon^{2})^{p} (1 + 2^{p} \varepsilon^{p}) - (1+2\varepsilon^{2})^{p} (1 + \varepsilon^{p})
	\ge
	0,
	\end{align}
	respectively.
	Bernoulli's inequality $(1+x)^{r} \leq 1+rx$ for $x\geq -1$ and exponents $0\leq r \leq 1$ implies that, for $1\leq p\leq 2$,
	\begin{align*}
	f_{1}(\varepsilon)
	&=
	(1 + \varepsilon^{2}) (1 + \varepsilon^{2})^{p-1} - (1 + \varepsilon^{p})
	\\
	&\leq
	(1 + \varepsilon^{2}) \bigl(1 + (p-1)\varepsilon^{2}\bigr) - (1 + \varepsilon^{p})
	\\
	&=
	p \varepsilon^{2} + (p-1)\varepsilon^{4} - \varepsilon^{p}.
	\end{align*}
	Since, for any $p<2$, $p \varepsilon^{2} + (p-1)\varepsilon^{4} < \varepsilon^{p}$ for sufficiently small $\varepsilon$, we can falsify \eqref{equ:ContractivenessFunction1} and thereby disprove \ref{item:ContractivenessLCM} for any $p<2$.
	
	For fixed $p>2$ consider the Taylor polynomial of degree $2$ for $f_{2}$, namely $T_{2}f_{2}(\varepsilon) = -p\varepsilon^{2}$;
	note that this is \emph{not} a Taylor polynomial of $f_2$ for $p<2$.
	Hence, \eqref{equ:ContractivenessFunction2} cannot hold for sufficiently small $\varepsilon$, providing a counterexample to \ref{item:ContractivenessLCM} for any $p>2$.
	
	\Cref{fig:NewCounterexampleContractiveness} illustrates the counterexamples for $\varepsilon = 0.3$ (left) as well as the functions $f_{1}(\varepsilon)$ and $f_{2}(\varepsilon)$ for several values of $p$.

	We now give a counterexample to \ref{item:NonNegativityLCC}.
	Let $\bP$ be the uniform distribution on $\Omega = \{ 1,\dots,6 \}$ and $V$ and $U$ given by
	\begin{center}
		\begin{tabular}{x{3em} x{3em} x{3em} x{7em} x{7em} x{7em}}
			\toprule
			$\omega\in\Omega$ & $V(\omega)$ & $U(\omega)$ & $\bE^{\ABLin}[U|V](\omega)$ & $R^{\ABLin}[U|V]^{2}(\omega)$ & $\Cov^{\ABLin}[U|V](\omega)$
			\\
			\midrule
			$1$ & $-1$ 				& $\phantom{-}1$ 	&
			\multirow{2}{*}{$\hspace{-2em}\left.\begin{array}{l} \\ \\ \end{array}\right\rbrace \hspace{1em} 0$} &
			\multirow{2}{*}{1} 		& \multirow{2}{*}{$\tfrac{1}{6}(7-N^{2})$} \\
			$2$ & $-1$ 				& $-1$ 			\\
			$3$ & $\phantom{-}0$ 	& $\phantom{-}1$ 	&
			\multirow{2}{*}{$\hspace{-2em}\left.\begin{array}{l} \\ \\ \end{array}\right\rbrace \hspace{1em} 0$} &
			\multirow{2}{*}{1} 		& \multirow{2}{*}{$\tfrac{1}{3}(2+N^{2})$} \\
			$4$ & $\phantom{-}0$ 	& $-1$ 			\\
			$5$ & $\phantom{-}1$ 	& $\phantom{-}N$ 	&
			\multirow{2}{*}{$\hspace{-2em}\left.\begin{array}{l} \\ \\ \end{array}\right\rbrace \hspace{1em} 0$} &
			\multirow{2}{*}{$N^{2}$}& \multirow{2}{*}{$\tfrac{1}{6}(1+5N^{2})$} \\
			$6$ & $\phantom{-}1$ 	& $-N$ 			\\
			\bottomrule
		\end{tabular}
	\end{center}
	where $N>0$. By symmetry, $\bE^{\ABLin}[U|V] = \bE[U|V] = 0$ while $\Cov^{\ABLin}[U|V] = \tfrac{1}{2}(N^{2}-1) V + \tfrac{1}{3}(N^{2}+2)$, which follows from the solution of a simple linear regression problem and is visualised in \Cref{fig:NegativeConditionalCovariance}.
	If $N>0$ is sufficiently large, $\Cov^{\ABLin}[U|V](\omega)$ clearly takes on negative values for $\omega = 1,2$.
	\begin{figure}[t]
		\centering
		\includegraphics[width=0.45\textwidth]{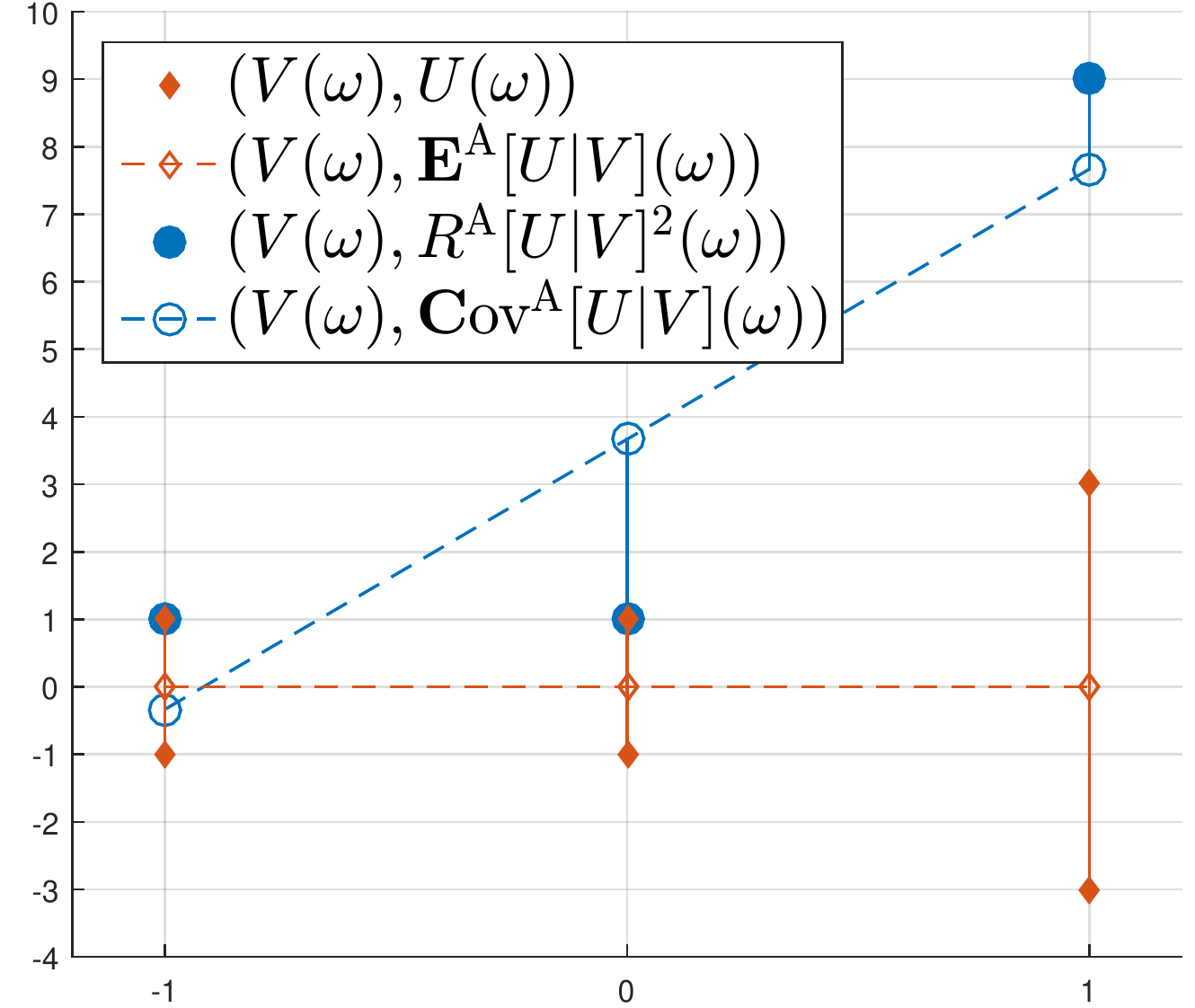}
		\caption{In contrast to the conditional covariance $\Cov[U|V]$, the LCC $\Cov^{\ABLin}[U|V]$ can take on negative values, while its expected value $\Cov_{V}^{\ABLin}[U]$ is guaranteed to be non-negative.}
		\label{fig:NegativeConditionalCovariance}
	\end{figure}
\end{proof}


\begin{proof}[Proof of \Cref{theorem:LinearConditionalMeanUnderRangeAssumption}]
	First note that, by \Cref{theorem:DouglasExistenceQ}, $C_V^\dagger C_{VU}\in\BLin(\cG;\cH)$ is well-defined and bounded and that $C_V (C_V^\dagger C_{VU}) = C_{VU}$, which implies $\overline{\gamma}_{U|V}^{\ABLin} C_{V} = (C_V^\dagger C_{VU})^\ast C_V = C_{UV}$.
	We have to show that $U - \gamma_{U|V}^{\ABLin}\circ V$ is $L^2(\bP;\cG)$-perpendicular to $\gamma\circ V$ for any other $\gamma\in \ABLin(\cH;\cG)$.
	Since $\bE[\gamma_{U|V}^{\ABLin}\circ V] = \mu_{U} = \bE[U]$, it follows by \Cref{lemma:L2InnerProductTraceCovariance} that
	\[
		\innerprod{ U - \gamma_{U|V}^{\ABLin}\circ V }{ \gamma \circ V }_{L^2(\bP;\cG)}
		=
		\trace \Bigl( \Cov \bigl[ U - \gamma_{U|V}^{\ABLin}\circ V , V \bigr]\, \overline{\gamma}^{\ast} \Bigr)
		=
		\trace \Bigl( \bigl( C_{UV} - \overline{\gamma}_{U|V}^{\ABLin}\, C_{V} \bigr)\, \overline{\gamma}^{\ast} \Bigr)
		=
		0 ,
	\]
	as required.
\end{proof}


\begin{proof}[Proof of \Cref{lemma:MartingaleLemmaEqualityOfSpaces}]
	The Karhunen--Lo\`eve expansion of $V$ takes the form
	\[
	V = \mu_V + \sum_{i\in\bN} Z_{i}\, h_{i},
	\]
	where the $Z_{i}$ are uncorrelated real-valued random variables over $(\Omega,\Sigma,\bP)$ with $\bE[Z_{i}]=0$ and $\bV[Z_{i}]=\sigma_{i}^{2}$ for all $i\in\bN$.
	Observe that $\sigma(V^{(n)}) = \sigma(Z^{(n)})$, where $Z^{(n)}\defeq (Z_1,\dots,Z_n)$.
	
	Now let $W \in \overline{\ABLin_{\cG}\circ V} \cap L^{2}(\Omega,\sigma(V^{(n)}) ; \cG)$.
	Since $W\in \overline{\ABLin_{\cG}\circ V}$, there exists a sequence $(\gamma_k)_{k\in\bN}$ in $\ABLin(\cH ; \cG)$ such that $\norm{\gamma_k \circ V - W}_{ L^2(\bP;\cG)} \xrightarrow[k\to\infty]{}0$.
	In order to show that $W \in \overline{\ABLin_{\cG}\circ V^{(n)}}$, we will find a sequence $(\tilde \gamma_k)_{k\in\bN}$ in $\ABLin(\cH ; \cG)$ such that $\norm{\tilde\gamma_k \circ V^{(n)} - W}_{ L^2(\bP;\cG)} \xrightarrow[k\to\infty]{}0$.
	To this end, we shift each $\gamma_k$ by a constant, choosing $\tilde \gamma_{k}(v) \defeq \gamma_{k}(v) + \gamma_{k}(\mu_{V} - \mu_{V}^{(n)})$, where $\mu_{V}^{(n)} \defeq \bE[V^{(n)}] = P_{\cH^{(n)}}\mu_{V}$.
	In order to prove above convergence observe that $\bE[W|V^{(n)}] = W$, since $W$ is $\sigma(V^{(n)})$-measurable, and that
	\[
	\bE[\gamma_k(V - \mu_{V}) | V^{(n)}]
	=
	\gamma_k \biggl( \bE \biggl[ \sum_{i\in\bN} Z_{i}\, h_{i} \bigg| Z^{(n)} \biggr] \biggr)
	=
	\gamma_k \biggl( \sum_{i=1}^{n} Z_{i}\, h_{i} \biggr)
	=
	\gamma_k \bigl( V^{(n)} - \mu_{V}^{(n)} \bigr),
	\]
	as the random variables $Z_{i}$ are uncorrelated.
	Since conditional expectations are $L^2(\bP;\cG)$-contractive projections, it follows that
	\begin{align*}
	\norm{\tilde\gamma_k \circ V^{(n)} - W}_{ L^2(\bP;\cG)}
	&=
	\bignorm{\gamma_k \bigl(V^{(n)} - \mu_{V}^{(n)}\bigr) - W + \gamma_k (\mu_{V})}_{ L^2(\bP;\cG)}
	\\
	&=
	\bignorm{\bE\bigl[\gamma_k(V - \mu_{V}) \big| V^{(n)}\bigr] - \bE\bigl[W \big| V^{(n)}\bigr] + \gamma_k (\mu_{V})}_{ L^2(\bP;\cG)}
	\\
	&=
	\bignorm{\bE[\gamma_k(V) - W | V^{(n)}]}_{L^2(\bP;\cG)}
	\\
	&\le
	\norm{\gamma_k \circ V - W }_{ L^2(\bP;\cG)}
	\\
	&\xrightarrow[k\to\infty]{}
	0.
	\end{align*}
	The second inclusion in \eqref{equ:TechnicalIdentityOfSpaces} is trivial.	
\end{proof}

\begin{proof}[Proof of \Cref{thm:MartingaleProperty}]
	Claim \ref{item:MartingaleProperty} follows from \Cref{lemma:MartingaleLemmaEqualityOfSpaces} via
	\[
	\bE\bigl[ \bE^{\ABLin}[U|V] \big| V^{(n)} \bigr]
	=
	P_{ L^{2}(\Omega,\sigma(V^{(n)}) ; \cG)} \, P_{\overline{\ABLin_{\cG}\circ V}}\, U
	=
	P_{\overline{\ABLin_{\cG}\circ V^{(n)}}}\, U
	=
	\bE^{\ABLin}[U|V^{(n)}],
	\]
	where all orthogonal projections are taken with respect to the $ L^{2}(\Omega,\Sigma,\bP ; \cG)$ inner product.

	Since $\bE^{\ABLin}[U|V] = \bE\bigl[ \bE^{\ABLin}[U|V] \big| V \bigr]$ by \Cref{theorem:BasicPropertiesLCM}\ref{item:ConditionalExpectationLCM} and using \eqref{equ:MartingaleProperty}, claim \ref{item:MartingaleConvergenceTheorem} follows directly from \citet[Theorems~2 and 6]{chatterji1960martingales} or \citet[Theorems~11.7 and 11.10]{klenke2013wahrscheinlichkeitstheorie}.
\end{proof}


\begin{proof}[Proof of \Cref{theorem:LinearConditionalMeanIncompatibleCase}]
	Since $C_{V}^{(n)}$ has finite rank, \Cref{theorem:BakerDecompositionCrosscovarianceOperator} yields $\ran C_{VU}^{(n)}\subseteq \ran C_{V}^{(n)}$ and \Cref{theorem:LinearConditionalMeanUnderRangeAssumption} implies
	\[
	\bE^{\ABLin}[U | V^{(n)}]
	=
	P_{\overline{\ABLin_{\cG}\circ V^{(n)}}} U
	=
	\gamma_{U|V}^{(n)} \circ V^{(n)} = \gamma_{U|V}^{(n)} \circ V.
	\]
	The statements follow directly from \Cref{thm:MartingaleProperty}.
\end{proof}


\begin{proof}[Proof of \Cref{theorem:LinearConditionalMeanRegularized}]
	First note that $\gamma_{\varepsilon}^{\AHS}\in \AHS (\cH;\cG)$, since it is a shifted composition of the Hilbert--Schmidt operator $C_{UV}$ and the bounded operator $(C_{V} + \varepsilon \Id_{\cH})^{-1}$.
	Now let $\gamma \in \AHS (\cH;\cG)$ and $\delta \defeq \gamma - \gamma_{\varepsilon}^{\AHS}\in \AHS (\cH;\cG)$.
	Since $\bE[\gamma_{\varepsilon}^{\AHS} \circ V] = \mu_{U}$, \Cref{lemma:L2InnerProductTraceCovariance} implies that
	\begin{align*}
		\bE[ \innerprod{U- \gamma_{\varepsilon}^{\AHS} \circ V}{\delta \circ V}_{\cG} ]
		&=
		\trace ( C_{UV}\overline{\delta}^{\ast} - \overline{\gamma}_{\varepsilon}^{\AHS} C_V \overline{\delta}^{\ast} )
		\\
		&=
		\trace \bigl( C_{UV} (C_{V} + \varepsilon \Id_{\cH})^{-1} ( C_{V} + \varepsilon \Id_{\cH} - C_{V} ) \overline{\delta}^{\ast} \bigr)
		\\
		&=
		\varepsilon \trace \bigl( \overline{\gamma}_{\varepsilon}^{\AHS}\overline{\delta}^{\ast} \bigr)
		\\
		&=
		\varepsilon\, \innerprod{\gamma_{\varepsilon}^{\AHS}}{\delta}_{\HS}.
	\end{align*}
	Hence,
	\begin{align*}
		\cE_{U|V}^{\textup{reg}}(\gamma)
		&=
		\cE_{U|V}^{\textup{reg}}(\gamma_{\varepsilon}^{\AHS}) +
		\bE[ \norm{\delta(V)}_{\cG}^{2} ] +
		\varepsilon \norm{\delta}_{\HS}^{2}
		\underbrace{
			- 2 \, \bE[ \innerprod{U-\gamma_{\varepsilon}^{\AHS}(V)}{\delta(V)}_{\cG} ]
			+
			2\, \varepsilon\, \innerprod{\gamma_{\varepsilon}^{\AHS}}{\delta}_{\HS}
		}_{=\, 0}
		\\
		&\ge
		\cE_{U|V}^{\textup{reg}}(\gamma_{\varepsilon}^{\AHS}),
	\end{align*}
	proving the claim.
\end{proof}


\begin{proof}[Proof of \Cref{thm:PropertiesLinearConditionalCovariance}]
	By the law of total linear expectation in 	\Cref{theorem:BasicPropertiesLCM}\ref{item:LawOfTotalExpectationLCM},
	\begin{align*}
	\bE\bigl[\Cov^{\ABLin}[U,W|V]\bigr]
	&=
	\bE\Bigl[ \bE^{\ABLin} \bigl[ R^{\ABLin}[U|V] \otimes R^{\ABLin}[W|V] \, \big| \, V\bigr] \Bigr]
	\\
	&=
	\bE\bigl[R^{\ABLin}[U|V] \otimes R^{\ABLin}[W|V]\bigr]
	\\
	&=
	\Cov^{\ABLin}_{V}[U,W],
	\end{align*}
	proving \ref{item:AverageConditionalCovariance}.
	By the law of total covariance and its linear version in \Cref{theorem:BasicPropertiesLCM}\ref{item:LawOfTotalCovarianceLCM}, we obtain
	\[
	\bE\bigl[\Cov[U|V]\bigr]
	=
	\Cov[U] - \Cov\bigl[\bE[U|V]\bigr]
	\le
	\Cov[U] - \Cov\bigl[\bE^{\ABLin}[U|V]\bigr]
	=
	\bE\bigl[\Cov^{\ABLin}[U|V]\bigr],
	\]
	proving \ref{item:UpperBoundExpectedConditionalCovariance} (the equality in \ref{item:UpperBoundExpectedConditionalCovariance} follows directly from \ref{item:AverageConditionalCovariance}).
	In order to prove \ref{item:GaussianFormulaForConditionalCovariance}, first note that, by \Cref{theorem:BakerDecompositionCrosscovarianceOperator} and using \Cref{notation:NotationForIncompatibleLCM},
	\[
	C_{V}^{1/2} (\overline{\gamma}_{U|V}^{(n)})^{\ast}
	=
	C_{V}^{1/2} C_{V}^{(n)\dagger} C_{VU}^{(n)}
	=
	P_{\cH^{(n)}} R_{VU} C_{U}^{1/2}
	\xrightarrow[n\to\infty]{}
	R_{VU} C_{U}^{1/2}
	=
	M_{VU}.
	\]
	Hence, by \eqref{equ:L2ConvergenceToLCE} and \Cref{lemma:L2InnerProductTraceCovariance},
	\begin{align*}
	\Cov\bigl[ \bE^{\ABLin} [U|V] , \bE^{\ABLin} [W|V] \bigr]
	&=
	\lim_{n\to\infty}
	\Cov\bigl[ \gamma_{U|V}^{(n)} \circ V , \gamma_{W|V}^{(n)} \circ V \bigr]
	\\
	&=
	\lim_{n\to\infty}
	\overline{\gamma}_{U|V}^{(n)} \, C_{V} \, (\overline{\gamma}_{W|V}^{(n)})^{\ast}
	\\
	&=
	\lim_{n\to\infty}
	\bigl( C_{V}^{1/2} (\overline{\gamma}_{U|V}^{(n)})^{\ast}\bigr)^{\ast}\,
	\bigl( C_{V}^{1/2} (\overline{\gamma}_{W|V}^{(n)})^{\ast} \bigr)
	\\
	&=
	M_{VU}^{\ast} M_{VW}.
	\end{align*}
	By \ref{item:AverageConditionalCovariance} and the law of total linear covariance in \Cref{theorem:BasicPropertiesLCM}\ref{item:LawOfTotalCovarianceLCM}, we obtain
	\[
	\Cov^{\ABLin}_{V}[U,W]
	=
	\Cov[U,W] - \Cov\bigl[ \bE^{\ABLin} [U|V] , \bE^{\ABLin} [W|V] \bigr]
	=
	C_{UW} - M_{VU}^{\ast} M_{VW},
	\]
	thus completing the proof.
\end{proof}


\begin{proof}[Proof of \Cref{corollary:FormulaForLCC}]
	Noting that $\mu_{Z} = \Cov^{\ABLin}_{V}[U,W]$, the claim follows directly from \Cref{theorem:LinearConditionalMeanUnderRangeAssumption,theorem:LinearConditionalMeanIncompatibleCase,thm:PropertiesLinearConditionalCovariance}.
\end{proof}


\begin{proof}[Proof of \Cref{proposition:AssumptionHierarchyCMEimpliesOldVersion}]	
	In this proof $\fm$ will be viewed as an element of $\HS(\cG;L^2(\bP_{X}))$, which is isometrically isomorphic to $\cG \otimes L^2(\bP_{X}) \cong L^2(\bP_{X};\cG)$ (see \Cref{remark:ViewTensorProductAsHS}).
	In this case $f_g = \fm(g)$ by \eqref{equ:SomeTensorProductIdentities}.
	
	If $\fm \in \HS(\cG;\cH)$, then clearly $f_g = \fm(g) \in \cH$ and this shows that \ref{assump:strongCME} $\implies$ \ref{assump:strongCMEold}.
	
	Now let $[\fm] \in (\cG\otimes \cH)_{\cC}$.
	Then there exist $\fh\in\cG\otimes \cH$ and $c\in\cG$ such that $\fh(x) + c = \fm(x)$ for $\bP_{X}$-a.e.\ $x\in\cX$, which implies $f_g = \fm(g) = \fh(g) + \innerprod{c}{g}_{\cG}$ $\bP_{X}$-a.e.\ in $\cX$.
	Since $\fh(g)\in\cH$ and $\innerprod{c}{g}_{\cG}\in\bR$ for each $g\in\cG$, this shows that \ref{assump:weakerCME} $\implies$ \ref{assump:weakerCMEold}.
	
	Let $\fh\in \cG\otimes \cH$ be such that $[\fh] = P_{\overline{(\cG\otimes \cH)_{\cC}}^{L_{\cC}^{2}(\bP_{X};\cG)}} [\fm]$.
	Letting $c\defeq \bE[(\fm - \fh)(X)] \in \cG$ and denoting the unit constant function by $\mathds{1}\in L^2(\bP_{X})$, it follows that, for each $h\in\cH$ and $g\in\cG$,
	\begin{align*}
	0
	&=
	\innerprod{[\fh] - [\fm]}{[g\otimes h]}_{L_{\cC}^{2}(\bP_{X};\cG)}
	\\
	&=
	\innerprod{\fh + c \otimes \mathds{1} - \fm}{g\otimes h - g\otimes \bE[h(X)]}_{\cG \otimes L^2(\bP_{X})}
	\\
	&=
	\innerprod{\fh(g) + \innerprod{c}{g}_{\cG} \mathds{1} - \fm(g)}{h - \bE[h(X)]}_{L^2(\bP_{X})}
	\\
	&=
	\innerprod{[\fh(g)] - [\fm(g)]}{[h]}_{L_{\cC}^{2}(\bP_{X})},
	\end{align*}
	where we used \eqref{equ:SomeTensorProductIdentities} and $\innerprod{c}{g}_{\cG} = \bE[(\fm(g) - \fh(g))(X)]$.
	Since $\fh(g)\in\cH$, this shows that \ref{assump:weakCME} $\implies$ \ref{assump:weakCMEold}.
	
	If $\fh \defeq P_{\overline{\cG\otimes \cH}^{L^{2}(\bP_{X};\cG)}} \fm \in \cG\otimes \cH$, then, for each $h\in\cH$ and $g\in\cG$,
	\[
	0
	=
	\innerprod{\fh - \fm}{g\otimes h}_{\cG \otimes L^2(\bP_{X})}
	=
	\innerprod{\fh(g) - \fm(g)}{h}_{L^2(\bP_{X})},
	\]
	where we used \eqref{equ:SomeTensorProductIdentities}.
	Since $\fh(g)\in\cH$, this shows that \ref{assump:weakCMEuncentred} $\implies$ \ref{assump:weakCMEuncentredold}.
	
	If $\fm \in \overline{\cG\otimes \cH}^{L^{2}(\bP_{X};\cG)}$, then there exists a sequence $(\fh_{n})_{n\in\bN}$ in $\cG\otimes \cH$ such that $\norm{\fh_{n} - \fm}_{\HS(\cG;L^2(\bP_{X}))} \to 0$ as $n\to\infty$.
	Let $g\in\cG$ and $h_n \defeq \fh_{n}(g)\in\cH$, $n\in\bN$.
	Then
	\[
	\norm{h_{n} - f_g}_{L^2(\bP_{X})}
	=
	\norm{\fh_{n}(g) - \fm(g)}_{L^2(\bP_{X})}
	\le
	\norm{\fh_{n} - \fm}_{\HS(\cG;L^2(\bP_{X}))}\, \norm{g}_{\cG}
	\xrightarrow[n\to\infty]{}
	0,
	\]
	which proves \ref{assump:strongCMElimit} $\implies$ \ref{assump:strongCMElimitold}.
	
	Finally, let $[\fm] \in \overline{(\cG\otimes \cH)_{\cC}}^{L_{\cC}^{2}(\bP_{X};\cG)}$.
	Then there exists a sequence $(\fh_{n})_{n\in\bN}$ in $\cG\otimes \cH$ such that $\norm{[\fh_{n}] - [\fm]}_{\cG \otimes L_{\cC}^2(\bP_{X})}\to 0$ as $n\to\infty$, i.e.\ $\norm{\fh_{n} + c_n \otimes \mathds{1} - \fm}_{\cG \otimes L^2(\bP_{X})}\to 0$ as $n\to\infty$ where $c_n \defeq \bE[(\fm-\fh_{n})(X)]\in\cG$ and $\mathds{1}\in L^2(\bP_{X})$ is the unit constant function.
	Let $g\in\cG$, $h_n \defeq \fh_{n}(g)\in\cH$ and $r_n \defeq \innerprod{c_n}{g}_{\cG} \mathds{1}$, $n\in\bN$.
	Then, as $n\to\infty$,
	\[
	\norm{h_{n} + r_n - f_g}_{L^2(\bP_{X})}
	=
	\norm{\fh_{n}(g) + \innerprod{c_n}{g}_{\cG} \mathds{1} - \fm(g)}_{L^2(\bP_{X})}
	\le
	\norm{\fh_{n} + c_n \otimes \mathds{1} - \fm}_{\HS(\cG;L^2(\bP_{X}))} \norm{g}_{\cG}
	\to 0,
	\]
	which proves \ref{assump:weakerCMElimit} $\implies$ \ref{assump:weakerCMElimitold}.
	
	The last two implications follow from the above and \citet[Theorems~4.1 and 5.1]{klebanov2019rigorous}.
\end{proof}


\begin{proof}[Proof of \Cref{lemma:CharacteristicImpliesBstar}]
	Suppose that $(\cG\otimes \cH)_{\cC}$ is not dense in $L_{\cC}^{2}(\bP_{X};\cG)$.
	Then there exists $\ff\in L^2(\bP_{X};\cG)$ that is not $\bP_{X}$-a.e.\ constant (i.e.\ there is no $c\in\cG$ such that $\ff(x)=c$ for $\bP_{X}$-a.e.\ $x\in\cX$) such that $[\ff]\perp_{ L_{\cC}^{2}(\bP_{X};\cG)} (\cG\otimes \cH)_{\cC}$.
	Let $\tilde{\ff} \coloneqq \ff-\bE[\ff(X)]$ and $\bP_{\tilde{\ff}} = \tilde{\ff}_{\#}\bP_{X}$ denote the pushforward measure of $\bP_{X}$ under $\tilde{\ff}$.
	
	Then there exists $g_{\ast}\in\supp (\bP_{\tilde{\ff}}) \subseteq \cG$ such that $g_{\ast}\neq 0$ (otherwise $\bP_{\tilde{\ff}}(\cG\setminus \{0\}) = 0$ and therefore $\tilde{\ff} = 0$ $\bP_{X}$-a.e.).
	Hence, after proper normalisation of $\tilde{\ff}$, we can define the following two \emph{distinct} probability measures on $\cX$:	
	\[
	Q_1(E)
	\defeq
	\int_{E} \absval{ \innerprod{\tilde{\ff}(x)}{g_\ast}_{\cG}}\, \rd \bP_{X}(x),
	\qquad
	Q_2(E)
	\defeq
	\int_{E} \absval{ \innerprod{\tilde{\ff}(x)}{g_\ast}_{\cG}} - \innerprod{\tilde{\ff}(x)}{g_\ast}_{\cG}\, \rd \bP_{X}(x)
	\]
	for every measurable subset $E\subseteq\cX$.	
	Indeed, for $\varepsilon \defeq \norm{g_{\ast}}_{\cG}/2 > 0$ and any $g = g_{\ast} + w \in B_{\varepsilon}(g_{\ast}) \defeq \{ g_{\ast} + w \mid w \in\cG,\, \norm{w}_{\cG}<\varepsilon \}$, the reverse triangle inequality implies $\innerprod{g}{g_\ast}_{\cG} \ge \norm{g_{\ast}}_{\cG}^{2} - \norm{w}_{\cG}\norm{g_{\ast}}_{\cG} > 2\varepsilon^{2}$.
	Hence, since $g_{\ast}\in \supp (\bP_{\tilde{\ff}})$, it follows that, for $E = \tilde{\ff}^{-1}(B_{\varepsilon}(g_{\ast}))$,
	\[
	Q_1(E) - Q_2(E)
	=
	\int_{E} \innerprod{\tilde{\ff}(x)}{g_\ast}_{\cG}\, \rd \bP_{X}(x)
	\ge
	2 \varepsilon^{2}\, \bP_{X}(E)
	=
	2 \varepsilon^{2}\, \bP_{\tilde{\ff}}(B_{\varepsilon}(g_{\ast}) )
	>
	0.
	\]
	Since, for every $\fh \in \cG\otimes \cH$,
	\[
	\innerprod{ \tilde{\ff} }{ \fh }_{L^{2}(\bP_{X};\cG)}
	=
	\innerprod{ \ff-\bE[\ff(X)] }{ \fh }_{L^{2}(\bP_{X};\cG)}
	\stackrel{[\ff]\perp (\cG\otimes \cH)_{\cC} }{=}
	\innerprod{ \ff-\bE[\ff(X)] }{ \bE[\fh(X)] }_{L^{2}(\bP_{X};\cG)}
	=
	0,
	\]
	it follows that $\tilde{\ff} \perp_{L^{2}(\bP_{X};\cG)} \cG\otimes \cH$.
	Let $Z_1\sim Q_1$, $Z_2\sim Q_2$ and $x\in\cX$.
	Since $g_{\ast} \otimes \varphi(x) \in \cG\otimes \cH$,
	\[
	\bigl( \bE[\varphi(Z_1)] - \bE[\varphi(Z_2)] \bigr)(x)
	=
	\int_{\cX} k(x,x')\, \innerprod{\tilde{\ff}(x')}{g_\ast}_{\cG}\, \rd \bP_{X}(x')
	=
	\innerprod{ g_{\ast} \otimes \varphi(x) }{ \tilde{\ff} }_{ L^2(\bP_{X};\cG)}
	=
	0,
	\]
	where we used \eqref{equ:IdentityFromViewingOtimesAsHS}, contradicting the assumption of $k$ being characteristic.	
\end{proof}


\begin{proof}[Proof of \Cref{thm:CMEunderB}]
	By \Cref{assumption:AssumptionHierarchyCMEalmosteverywhere}\ref{assump:weakerCME}, $\fm(x) = \ff(x) + c$ with $\ff\in \cG\otimes \cH$ and $c \in \cG$.
	As discussed in \Cref{remark:ViewTensorProductAsHS}, we can view $\ff\in \cG\otimes \cH$ as an element of both $L^2(\bP_{X};\cG)$ and $\HS(\cH;\cG)$, and thereby $\fm$ as an element of $\AHS(\cH;\cG)$.
	Hence, \eqref{equ:IdentityFromViewingOtimesAsHS} and the injectivity of $\varphi$ imply that
	\[
		\bE[U|V]
		=
		\bE[\psi(Y)|X]
		=
		\fm(X)
		=
		\ff(X) + c
		=
		\ff(\varphi(X)) + c
		=
		\fm\circ V
		\quad
		\text{a.s.}
	\]
	Since $\fm \in \AHS(\cH;\cG)\subseteq \ABLin(\cH;\cG)$, the statements follow from \Cref{theorem:LinearConditionalMeanUnderRangeAssumption};
	the inclusion $\ran C_{VU}\subseteq \ran C_{V}$ follows from \Cref{assumption:AssumptionHierarchyCMEalmosteverywhere}\ref{assump:weakerCME} by \Cref{proposition:AssumptionHierarchyCMEimpliesOldVersion}, cf.\ \Cref{fig:HierarchyOfAssumptions}.
\end{proof}


\begin{proof}[Proof of \Cref{thm:CMEunderBstar}]
	By \Cref{assumption:AssumptionHierarchyCMEalmosteverywhere}\ref{assump:weakerCMElimit}, there exists a sequence $\ff^{(n)}\in \cG\otimes \cH$, $n\in\bN$, such that
	\[
		\bignorm{[\ff^{(n)}] - [\fm]}_{L_{C}^{2}(\bP_{X};\cG)} \xrightarrow[n\to\infty]{} 0 .
	\]
	Therefore, denoting $c^{(n)} \defeq \bE[\fm(X) - \ff^{(n)}(X)] \in\cG$,
	\[
		\bignorm{\ff^{(n)}(X) + c^{(n)} - \fm(X)}_{L^{2}(\bP;\cG)} \xrightarrow[n\to\infty]{} 0.
	\]
	As discussed in \Cref{remark:ViewTensorProductAsHS}, $\ff^{(n)}\in \cG\otimes \cH$ can be seen as an element of both $L^2(\bP_{X};\cG)$ and $\HS(\cH;\cG)$.
	Hence, \eqref{equ:IdentityFromViewingOtimesAsHS} and the injectivity of $\varphi$ imply
	\begin{align*}
		\bE[U|V]
		& =
		\bE[\psi(Y)|X]
		=
		\fm(X)
		=
		\lim_{n\to\infty} \ff^{(n)}(X) + c^{(n)} \\
		& =
		\lim_{n\to\infty} \ff^{(n)}(\varphi(X)) + c^{(n)}
		=
		\lim_{n\to\infty} \gamma^{(n)}\circ V
		\quad
		\text{a.s.,}
	\end{align*}
	where the limits are in $L^{2}(\bP;\cG)$ and $\gamma^{(n)}(h) \defeq \ff^{(n)}(h) + c^{(n)}$.
	Since $\gamma^{(n)} \in \AHS(\cH;\cG)\subseteq \ABLin(\cH;\cG)$,
	this implies that $\bE[U|V] \in \overline{\ABLin_{\cG}\circ V}$ and thereby $\bE[U|V] = \bE^{\ABLin}[U|V]$.
	The claim now follows from \Cref{theorem:LinearConditionalMeanIncompatibleCase}.
\end{proof}

\section*{Acknowledgements}
\addcontentsline{toc}{section}{Acknowledgements}

IK and TJS are supported in part by the Deut\-sche For\-schungs\-ge\-mein\-schaft (DFG) through project TrU-2 ``Demand modelling and control for e-commerce using RKHS transfer operator approaches'' of the Excellence Cluster ``MATH+ The Berlin Mathematics Research Centre'' (EXC-2046/1, project \href{https://gepris.dfg.de/gepris/projekt/390685689}{390685689}).
TJS is further supported by the DFG project \href{https://gepris.dfg.de/gepris/projekt/415980428}{415980428}.
BS has been supported by the DFG project \href{https://gepris.dfg.de/gepris/projekt/389483880}{389483880}.

\appendix

\section{Technical Results}
\label{section:TechnicalResults}

The following well-known result due to \citet[Theorem~1]{douglas1966majorization} (see also \citealp[Theorem~2.1]{fillmore1971operator}) is used several times.


\begin{theorem}
	\label{theorem:DouglasExistenceQ}
	Let $\cH$, $\cH_1$, and $\cH_2$ be Hilbert spaces and let $A\colon \cH_1\to \cH$ and $B\colon \cH_2\to \cH$ be bounded linear operators with $\ran A\subseteq \ran B$.
	Then $Q \defeq B^\dagger A\colon \cH_1 \to \cH_2$ is a well-defined and bounded linear operator, where $B^\dagger$ denotes the Moore--Penrose pseudo-inverse of $B$.
	It is the unique operator that satisfies the conditions
	\begin{equation}
		A = BQ,
		\qquad
		\ker Q = \ker A,
		\qquad
		\ran Q \subseteq \overline{\ran B^{\ast}}.
	\end{equation}
\end{theorem}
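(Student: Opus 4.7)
The plan is to construct $Q$ explicitly, establish its basic algebraic properties directly from the definition of the Moore--Penrose pseudo-inverse, and then obtain boundedness from the closed graph theorem, since this is the step where an explicit norm estimate is not at hand.

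First I would make precise what $B^{\dagger}A$ means as a map on all of $\cH_1$. The pseudo-inverse $B^{\dagger}$ is defined on $\dom B^{\dagger} = \ran B \oplus (\ran B)^{\perp}$. The hypothesis $\ran A \subseteq \ran B \subseteq \dom B^{\dagger}$ guarantees that $Qx \defeq B^{\dagger}(Ax)$ is well defined for every $x\in\cH_1$, and $Q$ is linear because $A$ and $B^{\dagger}$ are. By construction of the pseudo-inverse, $Qx \in (\ker B)^{\perp} = \overline{\ran B^{\ast}}$ for every $x$, giving $\ran Q \subseteq \overline{\ran B^{\ast}}$, and $B(Qx) = BB^{\dagger}(Ax) = Ax$, i.e.\ $A = BQ$, since $Ax \in \ran B$ already implies that $BB^{\dagger}$ acts as the identity on $Ax$.

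Next I would verify $\ker Q = \ker A$. The inclusion $\ker A \subseteq \ker Q$ follows because, if $Ax = 0$, then $Qx = B^{\dagger}(0) = 0$. The converse inclusion is $Qx = 0 \Rightarrow Ax = BQx = 0$. For uniqueness, I would suppose $Q'$ satisfies the three conditions. From $BQx = A x = BQ'x$ we obtain $Qx - Q'x \in \ker B$. But $\ran Q, \ran Q' \subseteq \overline{\ran B^{\ast}} = (\ker B)^{\perp}$, so $Qx - Q'x$ lies in both $\ker B$ and $(\ker B)^{\perp}$, hence vanishes.

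The main obstacle is boundedness of $Q$, since no direct norm estimate presents itself when $\ran B$ is not closed and $B^{\dagger}$ is unbounded on $\dom B^{\dagger}$. Here I would apply the closed graph theorem: $Q$ is an everywhere-defined linear operator between the Hilbert spaces $\cH_1$ and $\cH_2$, so it suffices to show its graph is closed. Suppose $x_n \to x$ in $\cH_1$ and $Qx_n \to z$ in $\cH_2$. Continuity of $A$ and $B$ gives $Ax_n \to Ax$ and $BQx_n \to Bz$; combined with $BQx_n = Ax_n$, this yields $Bz = Ax$. Moreover, since each $Qx_n \in \overline{\ran B^{\ast}}$ and this subspace is closed, we have $z \in \overline{\ran B^{\ast}} = (\ker B)^{\perp}$. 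By the uniqueness established above (applied pointwise at $x$), $z = Qx$. Thus the graph is closed and $Q$ is bounded, completing the proof.
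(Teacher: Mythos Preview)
Your proof is correct. The paper does not supply its own proof of this theorem; it is stated in the appendix as a known result and attributed to \citet[Theorem~1]{douglas1966majorization} (see also \citealp[Theorem~2.1]{fillmore1971operator}), with only a historical remark following it. Your argument via the closed graph theorem is a standard and clean route to boundedness; note also that your uniqueness argument in fact uses only the two conditions $A = BQ'$ and $\ran Q' \subseteq (\ker B)^{\perp}$, so the kernel condition is redundant for uniqueness (though of course it is a genuine property of $Q$).
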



\begin{remark}
	In the original work of \citet{douglas1966majorization} only the existence of a bounded operator $Q$ such that $A = BQ$ was shown.
	However, the construction of $Q$ in the proof is identical to that of $B^\dagger$ (multiplied by $A$).
	This connection has been observed before by \citet[Corollary~2.2 and Remark~2.3]{arias2008gi_douglas}, where it was proven in the case of closed range operators, leaving the proof of the general case to the reader.
	Moreover, \citet[Theorem~1]{douglas1966majorization} only treats the case $\cH = \cH_1 = \cH_2$; the general case is mentioned as a remark at the end of his paper.
\end{remark}

Further, we are going to use the following characterisations of cross-covariance operators due to \citet[Theorem~1]{baker1973joint}.


\begin{theorem}
	\label{theorem:BakerDecompositionCrosscovarianceOperator}
		Under the notation of \Cref{section:GeneralSetupAndNotation}, there exists a unique bounded linear operator $R_{VU}\colon \cG\to\cH$ with operator norm $\norm{R_{VU}}\le 1$ such that
		\begin{equation}
			\label{equ:BakerDecomposition}
			C_{VU} = C_V^{1/2} R_{VU} C_U^{1/2},
			\qquad
			R_{VU}
			=
			P_{(\ker C_V)^\perp} R_{VU} P_{(\ker C_U)^\perp}.
		\end{equation}
\end{theorem}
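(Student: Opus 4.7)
The plan is to reconstruct Baker's cross-covariance factorisation from the Cauchy--Schwarz inequality applied at the level of scalar products with $U$ and $V$.

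\textbf{Step 1 (the key inequality).} For arbitrary $g\in\cG$ and $h\in\cH$, I would expand
\[
\innerprod{C_{VU}g}{h}_{\cH}
=
\bE\bigl[\innerprod{U-\mu_U}{g}_{\cG}\innerprod{V-\mu_V}{h}_{\cH}\bigr]
\]
using the definition of the cross-covariance, and then apply the Cauchy--Schwarz inequality in $L^{2}(\bP)$ together with $\bE[\innerprod{U-\mu_U}{g}_{\cG}^{2}]=\innerprod{C_U g}{g}_{\cG}=\norm{C_U^{1/2}g}_{\cG}^{2}$ (and similarly for $V$) to obtain
\[
\bigabsval{\innerprod{C_{VU}g}{h}_{\cH}}
\le
\bignorm{C_U^{1/2}g}_{\cG}\,\bignorm{C_V^{1/2}h}_{\cH}.
\]

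\textbf{Step 2 (construction of $R_{VU}$ on the right subspaces).} The inequality above shows that the bilinear map
\[
B_0\colon \ran C_U^{1/2}\times \ran C_V^{1/2}\to\bR,\qquad
B_0\bigl(C_U^{1/2}g,\,C_V^{1/2}h\bigr)\defeq \innerprod{C_{VU}g}{h}_{\cH}
\]
is well defined (it vanishes whenever either argument vanishes) and contractive. Since $\overline{\ran C_X^{1/2}}=(\ker C_X)^{\perp}$ for any self-adjoint non-negative operator $C_X$, I would extend $B_0$ by continuity to a contractive bilinear form on $(\ker C_U)^{\perp}\times(\ker C_V)^{\perp}$ and invoke the Riesz representation theorem to obtain a unique bounded linear operator $R_{VU}\colon (\ker C_U)^{\perp}\to (\ker C_V)^{\perp}$ with $\norm{R_{VU}}\le 1$ such that $B_0(a,b)=\innerprod{R_{VU}a}{b}_{\cH}$. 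Extending by zero on $\ker C_U$ and composing with the inclusion $(\ker C_V)^{\perp}\hookrightarrow \cH$ yields a bounded operator $R_{VU}\colon \cG\to\cH$ satisfying the projection identity in \eqref{equ:BakerDecomposition} by construction.

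\textbf{Step 3 (factorisation identity).} Using self-adjointness of $C_V^{1/2}$, for all $g\in\cG$ and $h\in\cH$,
\[
\innerprod{C_V^{1/2}R_{VU}C_U^{1/2}g}{h}_{\cH}
=\innerprod{R_{VU}C_U^{1/2}g}{C_V^{1/2}h}_{\cH}
=B_0\bigl(C_U^{1/2}g,C_V^{1/2}h\bigr)
=\innerprod{C_{VU}g}{h}_{\cH},
\]
so $C_{VU}=C_V^{1/2}R_{VU}C_U^{1/2}$ as required.

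\textbf{Step 4 (uniqueness).} If $R,R'$ both satisfy \eqref{equ:BakerDecomposition}, then $C_V^{1/2}(R-R')C_U^{1/2}=0$. Testing against $g\in\cG$ and $h\in\cH$ and rewriting the inner product as in Step~3 shows $\innerprod{(R-R')a}{b}_{\cH}=0$ for $a\in\ran C_U^{1/2}$ and $b\in\ran C_V^{1/2}$; density of these ranges in $(\ker C_U)^{\perp}$ and $(\ker C_V)^{\perp}$, combined with the projection identity, forces $R=R'$. The main (and essentially only) subtlety is Step~2, where one must carefully justify that $B_0$ is well defined on equivalence classes of representatives and that continuous extension preserves the contractivity bound; once this is done, the Riesz step and the remaining identities are routine.
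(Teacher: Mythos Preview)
Your proof is correct and follows the standard route to Baker's factorisation: Cauchy--Schwarz at the scalar level, construction of a contractive bilinear form on the ranges of the square roots, extension by density, and Riesz representation. Note, however, that the paper does not supply its own proof of this theorem --- it is stated as a technical result in the appendix and attributed directly to \citet[Theorem~1]{baker1973joint}. Your argument is essentially the one in Baker's original paper, so there is nothing to compare against here beyond the citation.
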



\begin{remark}
	If $\cH = \cG = \bR$, then $R_{VU}$ coincides with the Pearson correlation coefficient.
\end{remark}


This paper makes extensive use of the following two basic results.

\begin{lemma}
	\label{lemma:TraceOfProductImpliesZero}
	Let $A \colon \cH\to\cG$ be a trace-class operator such that $\trace(AB^{\ast}) = 0$ for any bounded operator $B\in \BLin(\cH;\cG)$.
	Then $A = 0$.
\end{lemma}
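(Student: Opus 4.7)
The plan is to exploit the non-degenerate duality between trace-class and bounded operators by testing the hypothesis against the simplest possible $B$'s, namely the rank-one operators $g \otimes h$. Since every rank-one operator is bounded, we may substitute $B = g \otimes h$ for arbitrary $g \in \cG$ and $h \in \cH$ into the assumption.

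The key computation is to identify $\trace\bigl( A (g\otimes h)^{\ast}\bigr)$ concretely. First I would observe that $(g \otimes h)^{\ast} = h \otimes g \colon \cG \to \cH$ (using the convention fixed in \Cref{section:GeneralSetupAndNotation}), so that $A(g\otimes h)^{\ast} = A (h\otimes g) \colon \cG \to \cG$. A direct evaluation on an arbitrary $g' \in \cG$ yields $A(h\otimes g)(g') = \innerprod{g}{g'}_{\cG}\, Ah = \bigl( (Ah) \otimes g\bigr)(g')$, so the composition is itself the rank-one operator $(Ah) \otimes g$ on $\cG$, whose trace is $\innerprod{Ah}{g}_{\cG}$.

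Combining these two steps, the hypothesis forces $\innerprod{Ah}{g}_{\cG} = 0$ for every $h \in \cH$ and every $g \in \cG$, and hence $Ah = 0$ for every $h \in \cH$, i.e.\ $A = 0$. No serious obstacle is anticipated; the only subtlety is keeping the convention for $\otimes$ and for the adjoint consistent with \Cref{section:GeneralSetupAndNotation}, so that the trace of the rank-one operator $(Ah)\otimes g$ is correctly identified with $\innerprod{Ah}{g}_{\cG}$.
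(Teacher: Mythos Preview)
Your proof is correct. Testing against rank-one operators $B = g\otimes h$ is a perfectly valid way to exploit the duality, and your adjoint and trace computations are consistent with the conventions of \Cref{section:GeneralSetupAndNotation}.

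The paper takes a different and shorter route: since $A$ is trace class it is in particular bounded, so one may simply choose $B = A$ in the hypothesis, obtaining $\trace(AA^{\ast}) = \norm{A}_{\HS}^{2} = 0$ and hence $A=0$. This avoids any bookkeeping with rank-one operators and their adjoints. Your argument, by contrast, makes the non-degeneracy of the pairing explicit by showing that every ``matrix entry'' $\innerprod{Ah}{g}_{\cG}$ vanishes; it is slightly longer but has the advantage of working verbatim under the weaker hypothesis that $\trace(AB^{\ast})=0$ merely for all \emph{finite-rank} $B$, whereas the paper's trick needs $B=A$ itself to be an admissible test operator.
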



\begin{proof}
	Choosing $B = A$ yields $\norm{A}_{\HS} = \trace(A A^{\ast})^{1/2} = 0$, hence $A=0$.
\end{proof}



\begin{lemma}
	\label{lemma:L2InnerProductTraceCovariance}
	With the notation of \Cref{section:GeneralSetupAndNotation}, let $U'\in L^2(\Omega, \Sigma, \bP;\cG)$ and $\gamma \in \ALin_V(\cH;\cG)$.
	Then
	\begin{enumerate}[label=(\alph*)]
		\item
		\label{item:L2InnerProductTraceCovariance}
		$\displaystyle
		\innerprod{U-\mu_{U}}{U'}_{L^{2}(\bP;\cG)}
		=
		\trace\bigl( \Cov[U,U'] \bigr)$;
		\item
		\label{item:PullOutLinearOperatorCovariance}
		$\displaystyle
		\Cov[\gamma\circ V, U] = \overline{\gamma} \, C_{VU}$
		and
		$\displaystyle
		\Cov[U,\gamma\circ V] = (\overline{\gamma} \, C_{VU})^{\ast}$.
	\end{enumerate}
	If $\gamma\in \ABLin(\cH;\cG)$, then the last equation can be simplified to $\Cov[U,\gamma\circ V] = C_{UV} \, \overline{\gamma}^{\ast}$.
\end{lemma}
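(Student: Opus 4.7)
For \ref{item:L2InnerProductTraceCovariance}, the plan is to exploit the fact that $\bE[U - \mu_U] = 0$ to re-center the inner product as $\innerprod{U - \mu_U}{U'}_{L^2(\bP;\cG)} = \bE\bigl[\innerprod{U - \mu_U}{U' - \mu_{U'}}_{\cG}\bigr]$, then rewrite the scalar inner product in $\cG$ using the elementary trace identity $\innerprod{a}{b}_{\cG} = \trace(b \otimes a)$ for rank-one operators $b \otimes a \colon \cG \to \cG$. Pulling the trace outside the Bochner integral (legitimate since $\trace$ is a continuous linear functional on trace-class operators and the outer-product-valued integrand is Bochner integrable) then gives $\trace\bigl(\bE[(U' - \mu_{U'}) \otimes (U - \mu_U)]\bigr) = \trace(\Cov[U', U])$. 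Finally, invariance of $\trace$ under the adjoint on a real Hilbert space yields $\trace(\Cov[U', U]) = \trace(\Cov[U, U'])$.

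For \ref{item:PullOutLinearOperatorCovariance}, I would evaluate $\Cov[\gamma\circ V, U]$ pointwise by testing against an arbitrary $g \in \cG$. Writing $\gamma(h) = b + \overline{\gamma}(h)$ and using $\bE[\gamma\circ V] = b + \overline{\gamma}(\mu_V)$ (which holds for $\overline{\gamma} \in \Lin_V(\cH;\cG)$ because $\overline{\gamma}\circ V \in L^2(\bP;\cG)$), the constant $b$ drops out and one is left with
\[
\bigl(\Cov[\gamma\circ V, U]\bigr)(g)
=
\bE\bigl[\innerprod{U - \mu_U}{g}_{\cG}\,\overline{\gamma}(V - \mu_V)\bigr].
\]
The crux is then to commute $\overline{\gamma}$ with the expectation so as to obtain $\overline{\gamma}\bigl(\bE[\innerprod{U - \mu_U}{g}_{\cG}\,(V - \mu_V)]\bigr) = \overline{\gamma}\,C_{VU}(g)$.

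Commuting $\overline{\gamma}$ with $\bE$ is routine when $\overline{\gamma}$ is bounded, but is the main obstacle since $\overline{\gamma}$ is only assumed to lie in $\Lin_V(\cH;\cG)$ and may be unbounded. My plan to handle this is to use the Karhunen--Lo\`eve expansion $V - \mu_V = \sum_{i} Z_i h_i$ from \Cref{notation:NotationForIncompatibleLCM}: the scalar random variable $f \defeq \innerprod{U - \mu_U}{g}_{\cG}$ lies in $L^2(\bP)$, and $f \cdot (V - \mu_V)$ then expands as $\sum_i \bE[f Z_i]\,h_i$ (Cauchy--Schwarz guarantees absolute convergence in $\cH$). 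Applying $\overline{\gamma}$ termwise uses only its action on the countable basis $(h_i)$, and termwise convergence of $\sum_i \bE[f Z_i]\,\overline{\gamma}(h_i)$ in $\cG$ follows from Cauchy--Schwarz together with $\sum_i \sigma_i^2\, \norm{\overline{\gamma}(h_i)}_{\cG}^2 = \bE[\norm{\overline{\gamma}(V - \mu_V)}_{\cG}^2] < \infty$, which is exactly the $\Lin_V$ assumption. This justifies the interchange and establishes $\Cov[\gamma\circ V, U] = \overline{\gamma}\,C_{VU}$.

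The companion identity $\Cov[U, \gamma\circ V] = (\overline{\gamma}\,C_{VU})^{\ast}$ follows immediately by taking adjoints in the outer-product definition, since $\Cov[U, \gamma\circ V] = \Cov[\gamma\circ V, U]^{\ast}$. For the final simplification in the bounded case $\gamma \in \ABLin(\cH;\cG)$, note that $\overline{\gamma}$ has a bounded adjoint $\overline{\gamma}^{\ast}$ and $C_{VU}^{\ast} = C_{UV}$, so $(\overline{\gamma}\,C_{VU})^{\ast} = C_{VU}^{\ast}\,\overline{\gamma}^{\ast} = C_{UV}\,\overline{\gamma}^{\ast}$, yielding the stated formula.
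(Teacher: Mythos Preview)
Your argument for \ref{item:L2InnerProductTraceCovariance} is correct and essentially identical to the paper's: the paper expands in an orthonormal basis of $\cG$ and recognises $\sum_{j}\innerprod{e_j}{C_{UU'}e_j}_{\cG}$ as the trace, whereas you use the coordinate-free identity $\innerprod{a}{b}_{\cG}=\trace(b\otimes a)$ and pull the trace through the Bochner integral. These are two phrasings of the same computation.

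For \ref{item:PullOutLinearOperatorCovariance} the paper's proof is a single line: it writes $\bE[\overline{\gamma}(V-\mu_V)\innerprod{U-\mu_U}{g}_{\cG}] = \overline{\gamma}\,\bE[(V-\mu_V)\innerprod{U-\mu_U}{g}_{\cG}]$ without further comment. You go further and correctly flag that this interchange is not automatic when $\overline{\gamma}$ is merely in $\Lin_V(\cH;\cG)$ and hence possibly unbounded. However, your Karhunen--Lo\`eve workaround does not actually close this gap. The key step ``applying $\overline{\gamma}$ termwise'' already presupposes that $\overline{\gamma}\bigl(\sum_i a_i h_i\bigr)=\sum_i a_i\,\overline{\gamma}(h_i)$ for an $\cH$-convergent series, and the identity $\sum_i \sigma_i^{2}\norm{\overline{\gamma}(h_i)}_{\cG}^{2}=\bE\bigl[\norm{\overline{\gamma}(V-\mu_V)}_{\cG}^{2}\bigr]$ that you invoke likewise assumes $\overline{\gamma}(V-\mu_V)=\sum_i Z_i\,\overline{\gamma}(h_i)$ almost surely. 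Both of these are precisely the continuity-type interchange you set out to justify, so the argument is circular at that point. In short: your proof matches the paper's in substance, and your extra care for the unbounded case is well motivated, but the Karhunen--Lo\`eve paragraph does not supply the missing justification --- it relocates the same gap. (The paper's own proof simply leaves this interchange unjustified.)
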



\begin{proof}
	Let $(e_j)_{j\in \cJ}$, $\cJ\subseteq\bN$ be an orthonormal basis of $\cG$. Then
	\begin{align*}
		\innerprod{U - \mu_{U}}{U'}_{L^{2}(\bP;\cG)}
		&=
		\innerprod{U - \mu_{U}}{U' - \mu_{U'}}_{L^{2}(\bP;\cG)}
		\\
		&=
		\bE\bigl[ \innerprod{U - \mu_{U}}{U' - \mu_{U'}}_{\cG} \bigr]
		\\
		&=
		\sum_{j\in\cJ} \bE\bigl[ \innerprod{e_{j}}{U - \mu_{U}}_{\cG}\, \innerprod{U' - \mu_{U'}}{e_{j}}_{\cG} \bigr]
		\\
		&=
		\sum_{j\in\cJ} \innerprod{e_{j}}{C_{UU'} e_{j}}_{\cG}
		\\
		&=
		\trace \bigl[\Cov[U,U']\bigr],
	\end{align*}
	proving \ref{item:L2InnerProductTraceCovariance}.
	Since $V\in L^2(\bP;\cH)$ and $\gamma \circ V\in L^2(\bP;\cG)$, all covariance operators are well defined and so, for $g\in\cG$,
	\[
		\Cov[\gamma V , U](g)
		=
		\bE[ \gamma (V-\mu_{V}) \innerprod{U - \mu_{U}}{g}_{\cG} ]
		=
		\gamma \, \bE[ (V-\mu_{V}) \innerprod{U - \mu_{U}}{g}_{\cG} ]
		=
		\gamma \, \Cov[V , U](g),
	\]
	proving \ref{item:PullOutLinearOperatorCovariance}.
\end{proof}


\bibliographystyle{abbrvnat}
\bibliography{myBibliography}
\addcontentsline{toc}{section}{References}

\end{document}